\numberwithin{equation}{section}
\newtheorem{thm}{Theorem}[section]
\newtheorem*{thm*}{Theorem}
\newtheorem{prop}[thm]{Proposition}
\newtheorem{lemma}[thm]{Lemma}
\newtheorem{cor}[thm]{Corollary}
\theoremstyle{remark}
\newtheorem{remark}[thm]{Remark}
\newcommand{\F}{\mathbb{F}}
\newcommand{\R}{\mathbb{R}}
\newcommand{\Z}{\mathbb{Z}}
\newcommand{\ep}{\varepsilon}
\newcommand{\con}{\equiv}
\newcommand{\ndiv}{\nmid}
\newcommand{\modd}[1]{\; ( \text{mod} \; #1)}
\newcommand{\bstack}[2]{#1 \atop #2}
\newcommand{\maps}{\rightarrow}
\newcommand{\intersect}{\cap}
\newcommand{\Union}{\bigcup}
\newcommand{\supp}{{\rm supp \;}}
\newcommand{\al}{\alpha}
\newcommand{\be}{\beta}
\newcommand{\del}{\delta}
\newcommand{\Del}{\Delta}
\newcommand{\sig}{\sigma}
\newcommand{\lam}{\lambda}
\newcommand{\Acal}{\mathcal{A}}
\newcommand{\Pcal}{\mathcal{P}}
\newcommand{\Gcal}{\mathcal{G}}
\newcommand{\Scal}{\mathcal{S}}
\newcommand{\Mbf}{\mathbf{M}}
\newcommand{\Sbf}{\mathbf{S}}
\newcommand{\Ebf}{\mathbf{E}}
\newcommand{\onebf}{\boldsymbol1}
\newcommand{\beq}{\begin{equation}}
\newcommand{\eeq}{\end{equation}}
\def\@tocline#1#2#3#4#5#6#7{\relax
  \ifnum #1>\c@tocdepth 
  \else
    \par \addpenalty\@secpenalty\addvspace{#2}%
    \begingroup \hyphenpenalty\@M
    \@ifempty{#4}{%
      \@tempdima\csname r@tocindent\number#1\endcsname\relax
    }{%
      \@tempdima#4\relax
    }%
    \parindent\z@ \leftskip#3\relax \advance\leftskip\@tempdima\relax
    \rightskip\@pnumwidth plus4em \parfillskip-\@pnumwidth
    #5\leavevmode\hskip-\@tempdima
      \ifcase #1
       \or\or \hskip 1em \or \hskip 2em \else \hskip 3em \fi%
      #6\nobreak\relax
    \hfill\hbox to\@pnumwidth{\@tocpagenum{#7}}\par
    \nobreak
    \endgroup
  \fi}
\numberwithin{equation}{section}
\begin{document}

\title[Counterexamples for high-degree generalizations of Schr\"odinger]{Counterexamples for high-degree generalizations of the \\Schr\"odinger maximal operator}
\author[An]{Chen An}
\address{Duke University, 120 Science Drive, Durham NC 27708}
\email{chen.an@duke.edu}

\author[Chu]{Rena Chu}
\address{Duke University, 120 Science Drive, Durham NC 27708}
\email{rena.chu@duke.edu}

\author[Pierce]{Lillian B. Pierce}
\address{Duke University, 120 Science Drive, Durham NC 27708}
\email{pierce@math.duke.edu}
\keywords{Schr\"odinger maximal operator, regularity of solutions to PDE's, Weil bound}

\makeatletter
\@namedef{subjclassname@2020}{%
  \textup{2020} Mathematics Subject Classification}
\makeatother
\subjclass[2020]{35B65, 46E30, 11L07}

\begin{abstract}
In 1980 Carleson posed a question on the minimal regularity of an initial data function in a Sobolev space $H^s(\R^n)$ that implies pointwise convergence for the solution of the linear Schr\"odinger equation. After progress by many authors, this was recently resolved (up to the endpoint) by  Bourgain, whose counterexample construction for the Schr\"odinger maximal operator proved a necessary condition on the regularity, and  Du and Zhang, who proved a sufficient condition. 
Analogues of Carleson's question remain open for many other dispersive PDE's. We develop a flexible new method to approach such problems, and prove that for any integer $k\geq 2$, if a degree $k$ generalization of the Schr\"odinger maximal operator is bounded from $H^s(\R^n)$ to $L^1(B_n(0,1))$, then $s \geq  \frac{1}{4} + \frac{n-1}{4((k-1)n+1)}.$
In dimensions $n \geq 2$, for every degree $k \geq 3$, this is the first result  that exceeds a long-standing barrier at $1/4$. Our methods are number-theoretic, and in particular apply the Weil bound, a consequence of the truth of the Riemann Hypothesis over finite fields.
\end{abstract}

\maketitle
\section{Introduction}

Given a real-valued polynomial $P(\xi):\R^n \maps \R$, define
\[
T_t^{(P)}f(x):=\frac{1}{(2\pi)^n} \int_{\R^n} \hat{f}(\xi)e^{i(\xi \cdot x+P(\xi)t)} d\xi,
\]
acting initially on functions $f$ of Schwartz class on $\R^n$.
In the case that $P(\xi) = P_2(\xi) := |\xi|^2,$
$T_t^{(P_2)}f$ provides the solution to the linear Schr\"odinger equation, 
\[ \begin{cases}
    i \partial_t u - \Delta u=0, \quad (x,t) \in \R^n \times \R, \\
    u(x,0) = f(x), \quad x \in \R^n.
    \end{cases}
    \]
Thus the study of $T_t^{(P_2)}f$ relates to Carleson's well-known question of what degree of regularity of $f$ is required for the pointwise convergence result 
\begin{equation}\label{ptwise}
\lim_{t \to 0}T_t^{(P_2)}f(x)=f(x), \qquad \text{a.e. $x \in \R^n$.}
\end{equation}
 Precisely: what is the smallest value of $s$ for which  this holds for all $f \in H^s(\R^n)$?

When the story started in 1980, it was soon proved that $s \geq 1/4$ is necessary and $s \geq n/4$ is sufficient, so that for dimensions $n \geq 2$ the benchmarks were initially quite far apart  (see Carleson \cite[Eqn (14) p. 24]{Car80} and Dahlberg and Kenig \cite{DahKen82}). For many years, it was widely conjectured that the minimal value for which (\ref{ptwise}) holds should be $s=1/4$ in all dimensions.
In 2013, Bourgain expressed surprise that he was able to show otherwise: as he wrote in \cite{Bou13}, ``perhaps the most interesting point in this note is a disproof of what one
seemed to believe, namely that $f \in H^s(\R^n)$, $s>1/4$   should be the correct condition in arbitrary
dimension $n$.'' 
Ultimately, the resolution of Carleson's   question (up to the endpoint) arrived when  Bourgain \cite{Bou16} showed   that $s \geq s_2^*(n) := n/(2(n+1))$ is necessary, and  Du and Zhang \cite{DuZha19} showed that $s> s_2^*(n)$ is sufficient (see \S \ref{sec_lit} for further literature).

Analogous questions naturally arise for many other dispersive PDE's. These questions have been developed since the 1980's in the large literature on local smoothing and associated maximal operator estimates (which we review in \S \ref{sec_lit}), and were also raised explicitly by Bourgain \cite[\S 5]{Bou13} and Demeter and Guo \cite{DemGuo16}.
In this paper, we develop  flexible new number-theoretic strategies to construct counterexamples for generalizations of the Schr\"odinger maximal operator, with corresponding implications for convergence questions analogous to (\ref{ptwise}).
Notably,
we push  the necessary condition on $s$ above a long-standing barrier at $1/4$, analogous to the barrier Bourgain remarked upon for the  Schr\"odinger case. 

We introduce the maximal operator that lies at the heart of the matter. Given a real-valued polynomial $P$, define the maximal operator
\beq\label{max}
f \mapsto \sup_{0<t<1} |T_t^{(P)} f(x)|.
\eeq
If this maximal operator is bounded from $H^s(\R^n)$ to $L^2_{\mathrm{loc}}(\R^n)$ then for all $f \in H^s(\R^n)$
the pointwise convergence property holds:
\beq\label{ptwiseP}
\lim_{t\maps 0}T_t^{(P)}f(x) = f(x) \qquad \text{for a.e. $x \in \R^n$.}
\eeq
On the other hand, if this maximal operator 
is unbounded as an operator from $H^s(\R^n)$ to $L^1_{\mathrm{loc}}(\R^n)$, then (\ref{ptwiseP})
must fail for some $f \in H^s(\R^n)$, by the Stein-Nikishin maximal principle. (See e.g. \cite[Appendix A]{Pie20} for standard arguments to deduce these relationships.)
Thus  Bourgain's definitive result that $s \geq s_2^*(n)$ is necessary  for   (\ref{ptwise}) followed from showing that for each $s< s_2^*(n)$ the maximal operator (\ref{max}) with $P(\xi)=P_2(\xi) = |\xi|^2$ is unbounded from $H^s(\R^n)$ to $L^1(B_n(0,1))$, where $B_n(0,1)$ denotes the unit ball centered at the origin in $\R^n$.

In this paper, we study higher-degree analogues of the Schr\"odinger maximal operator, and prove the first necessary condition on $s$ that goes beyond $1/4$, both for the maximal estimate and for the convergence property. We focus on the family of polynomials defined for any integer $k \geq 2$ by
\[ P_k(\xi) := \xi_1^k + \cdots + \xi_n^k,\]
with associated maximal operator
\[f \mapsto \sup_{0<t<1} |T_t^{(P_k)} f|.\]
For dimension $n=1$, for every $k \geq 2,$ this maximal operator is bounded from $H^s(\R)$ to $L^1_{\mathrm{loc}}(\R)$ if and only if $s \geq 1/4$, and   the convergence property (\ref{ptwiseP}) holds if and only if $s \geq 1/4$; see \cite{DahKen82,KenRui83,Sjo87,Veg88,KPV91,Sjo97,Sjo98}.
For dimensions $n \geq 2$, degree $k=2$ is the Schr\"odinger case   resolved by Bourgain \cite{Bou16} and Du-Zhang \cite{DuZha19}. For degrees $k \geq 3,$ previous literature left a gap: the behavior for $1/4 \leq s \leq 1/2$ was unknown; see \cite{BenDev91,RVV06,Sjo98}. In fact, for many dispersive PDE's the  convergence question is unresolved for $1/4 \leq s \leq 1/2$ (see \S \ref{sec_lit}). Our main result proves the first necessary condition on $s$ that is strictly above $1/4.$

\begin{thm}\label{thm_main_max}
Fix $n \geq 2$ and $k \geq 2$. Suppose there is a constant $C_s$ such that for all $f \in H^s(\R^n)$,
\beq\label{max_op} \|\sup_{0<t<1} |T_t^{(P_k)} f|\, \|_{L^1(B_n(0,1))}  \leq C_s \|f\|_{H^s(\R^n)}.\eeq
Then $s \geq \frac{1}{4} + \frac{n-1}{4((k-1)n+1)}.$
\end{thm}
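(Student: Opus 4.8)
The plan is to construct an explicit counterexample function $f$ whose Fourier transform is supported on a carefully chosen arithmetically structured set of frequencies, and to exhibit a time $t = t(x)$ and a set of $x$ of positive measure where the exponential sum defining $T_t^{(P_k)}f(x)$ is large. First I would reduce matters to a discretized, periodic problem: take $\hat f$ to be (essentially) the indicator of a lattice set $\mathcal{A} \subset \Z^n \cap [0,R)^n$ scaled by a large parameter $R$, so that $\|f\|_{H^s}^2 \asymp R^{2s}|\mathcal{A}|$, and the task becomes showing that $\sum_{\xi \in \mathcal{A}} e(\xi\cdot x + t\,P_k(\xi))$ (with $e(u) = e^{2\pi i u}$) has absolute value comparable to $|\mathcal{A}|$ for all $x$ in a set of measure $\gg 1$, for a suitable choice of $t$ depending on $x$. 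Matching the two sides of \eqref{max_op} then forces $R^{-s}|\mathcal{A}| \lesssim$ (size of the large set)$\cdot R^{-s}|\mathcal{A}|^{1/2} \cdot$(correction factors), and optimizing over the parameters of the construction yields the claimed exponent.

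The heart of the construction is number-theoretic. I would choose a prime $p$ and work modulo $p$: the frequency set $\mathcal{A}$ will be built from residues in a subspace or coset structure mod $p$, and the relevant $x$-values will be taken near rationals with denominator $p$, say $x = a/p + y$ with $a \in (\Z/p\Z)^n$ and $|y|$ small, while $t$ is taken near $b/p$ for an appropriate $b$. On this major-arc-type neighborhood, $T_t^{(P_k)}f(x)$ decouples (after a Taylor expansion absorbing the small $y$ and $t - b/p$ into a harmless smooth factor) into a complete exponential sum mod $p$ of the shape $\sum_{\xi} e_p(a\cdot\xi + b\,P_k(\xi))$, where $P_k(\xi) = \xi_1^k + \cdots + \xi_n^k$ splits as a product of one-dimensional Gauss-type sums $\sum_{u \bmod p} e_p(a_j u + b u^k)$. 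This is exactly where the Weil bound enters: each such one-dimensional sum is $O(\sqrt p)$, and — crucially for getting a *lower* bound rather than just an upper bound — one wants to arrange, via an averaging or pigeonhole argument over the choices of $a$ and $b$ (or by restricting $\mathcal{A}$ to a subset on which $P_k$ is constant or low-degree, e.g. a common level set or a Vinogradov-type system), that the sum is genuinely of size $|\mathcal{A}|$ on a positive-density set of $(a, b)$. The dimensional gain over $1/4$ comes from the interaction between the $n$ coordinate directions: by choosing $\mathcal{A}$ to lie in a thin slab or to satisfy additional congruence conditions in $n - 1$ of the variables, one trades a factor of size roughly $R^{(n-1)/((k-1)n+1)}$ against the $L^1$-norm of the maximal function, producing the extra term $\frac{n-1}{4((k-1)n+1)}$.

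In more detail, the key steps in order: (1) fix parameters $R$ (large), a prime $p \asymp R^\theta$ for an exponent $\theta$ to be optimized, and a degree-$k$ adapted frequency set $\mathcal{A} \subset \Z^n \cap [0,R)^n$ with $|\mathcal{A}| \asymp R^{n-1}\cdot(R/p)$ or a similar count dictated by imposing $n-1$ linear congruences mod $p$; (2) set $f$ via $\hat f = \mathbf 1_{\mathcal{A}}$ (smoothed), so $\|f\|_{H^s} \asymp R^s |\mathcal{A}|^{1/2}$; (3) for each admissible $a \in (\Z/p\Z)^n$, locate $b = b(a) \bmod p$ and a ball $B_a$ of radius $\asymp 1/R$ around $a/p$ (inside $B_n(0,1)$, so $\asymp p^n \asymp R^{n\theta}$ of them, of total measure $\asymp R^{n\theta}\cdot R^{-n} = R^{-n(1-\theta)}$ — one keeps only a positive-density sub-collection) on which the complete sum mod $p$ is $\gg |\mathcal{A}|$, choosing $t = t(x) \asymp b/p$ with $|t - b/p|$ small enough that the non-resonant contribution is negligible; (4) conclude $\sup_t |T_t^{(P_k)}f| \gg |\mathcal{A}|$ on a set of measure $\gg $ (the total measure of the chosen balls), plug into \eqref{max_op}, cancel $|\mathcal{A}|^{1/2}$, and read off the constraint on $s$; (5) optimize $\theta$. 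The main obstacle — and the place where the Weil bound and the specific structure of $P_k$ do real work — is step (3): one must guarantee a genuine lower bound $\gg |\mathcal{A}|$ for the complete exponential sum on a positive-density set of $(a, b)$, not merely on average, since an $L^2$ (Plancherel) lower bound on the sum would only recover $s \geq 1/4$. Overcoming this requires either exploiting cancellation-free sub-configurations of $\mathcal{A}$ (where $b\,P_k(\xi)$ is constant across $\mathcal{A}$, making the sum trivially large for the right $a$) or a second-moment/variance computation for $\sum_a |\sum_{\xi\in\mathcal{A}} e_p(a\cdot\xi + b P_k(\xi))|^2$ whose main term dominates the error precisely because the Weil bound controls the off-diagonal cross terms — this is the crux of the argument and where I expect the bulk of the technical effort to lie.
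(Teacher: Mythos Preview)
Your high-level instincts are right: the counterexample is arithmetic, the phase must be driven onto rationals with prime denominator, the Weil bound controls the resulting complete sums, and a Parseval/second-moment argument is needed to guarantee that a \emph{positive proportion} of coefficient choices give sums of genuine size $q^{1/2}$ rather than merely on average. These are all present in the paper's proof. But the concrete construction you sketch diverges from the paper in ways that matter, and as written it has a gap.

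The paper's $f$ is \emph{not} symmetric in the coordinates: it is smooth (a dilated bump $\phi(S_1 x_1)e(Rx_1)$) in the first variable and a lattice sum of spacing $L$ only in the remaining $n-1$ variables. Correspondingly, the time $t$ is not chosen near a rational $b/p$ as you propose; instead $t$ is tied to $x_1$ via $t = -x_1/(kR^{k-1}) + \tau$ with $|\tau|$ tiny. After Fourier inversion in the first variable this kills the $\xi_1$-phase entirely, and the remaining $(n-1)$-fold exponential sum has phase $m_j y_j + m_j^k(y_1+s)$ where $y_1 = -L^k x_1/(kR^{k-1}) \pmod{2\pi}$ and $s = L^k\tau$. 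The crucial point (Proposition~2.5 in the paper) is that one then \emph{chooses} $s$ so that $y_1 + s = 2\pi a_1/q$ exactly: the \emph{top} coefficient of the degree-$k$ polynomial is rational on the nose, so the Weil bound applies after completing the sum, and one avoids the much weaker Weyl/Vinogradov bounds that a generic real top coefficient would force. Your symmetric lattice set $\mathcal{A}\subset\Z^n$ with $t\approx b/p$ does not set up this mechanism, and it is not clear how you would control the degree-$k$ phase for $k\geq 3$ without it.

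The second issue is the measure of the good $x$-set. You compute it as $\asymp R^{n\theta}\cdot R^{-n}$, a negative power of $R$; feeding that loss into the optimization does not produce the exponent $\frac{1}{4}+\frac{n-1}{4((k-1)n+1)}$. The paper obtains measure $\gg(\log Q)^{-1}$ by (i) taking boxes of \emph{anisotropic} widths, roughly $q^{-1}$ in $y_1$ and $q^{-1-1/(n-1)}$ in $y_2,\dots,y_n$, (ii) ranging over \emph{all} primes $q$ in a dyadic window $[Q/2,Q]$ rather than a single prime, and (iii) proving a well-distribution lemma (Lemma~4.1 and Proposition~4.2) showing that the union of these boxes over $q$ and over the ``good'' coefficient tuples $\underline{a}\in\mathcal{G}^*(q)$ has measure comparable to the sum of the individual measures. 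The primality of $q$ is used here to control intersections of boxes with different denominators. Your single-prime, isotropic-ball construction cannot reach constant measure, and this is where the specific shape $(k-1)n+1$ of the denominator ultimately comes from: it arises when one balances the constraint $Q^{-1}\leq L^k/(S_1 R^{k-1})$ (forced by the $t$--$x_1$ link above) against $Q^{-1-1/(n-1)}\asymp (R/L)^{-1}$ (forced by the box widths needed for the measure to be $\gg(\log Q)^{-1}$).
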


  As a consequence of Theorem \ref{thm_main_max}, for $P=P_k$,  the convergence property (\ref{ptwiseP}) fails for all $s<  \frac{1}{4} + \frac{n-1}{4((k-1)n+1)}$ for each dimension $n \geq 2$.

An interesting open question remains: for $P=P_k$, what is the   value of $s_k^*(n)$  such that for all $s> s_k^*(n),$ the convergence property (\ref{ptwiseP}) holds for all $f \in H^s(\R^n)$, and for all $s< s_k^*(n)$ it fails? For $n=1,$ $s_k^*(n)=1/4$ for all $k \geq 2.$
 For $n\geq 2$ and degree $k=2$, $s_2^*(n)=n/(2(n+1))$ and our work recovers Bourgain's  construction. For degrees $k \geq 3$, the optimal value for $s_k^*(n)$ remains open in dimensions $n \geq 2$. We do not have a prediction for whether the threshold we obtain in Theorem \ref{thm_main_max} is optimal (but see Remark \ref{remark_dispersive}).
 
  Our results fit into a large body of research on dispersive PDE's, local smoothing, and maximal operators. In \S \ref{sec_lit}, we situate our results in that literature, which is rich with open problems. But first we describe our method for proving Theorem \ref{thm_main_max}, which is number-theoretic, and appears to be the first time  that the Weil bound has been introduced to study the regularity of solutions to a PDE. We anticipate our approach will be able to address many further open questions.  

\subsection{Method of proof}
We prove Theorem \ref{thm_main_max} by constructing an explicit family of counterexamples that violate the putative $H^s \maps L^1_{\mathrm{loc}}$ bound (\ref{max_op}) for every small $s$. 
\begin{thm}\label{thm_main}
 Fix $n \geq 2$ and $k \geq 2$.
Fix $s < \frac{1}{4} + \frac{n-1}{4((k-1)n+1)}.$ There exists a sequence of real numbers $R_j \maps \infty$ as $j \maps \infty$, and a sequence of functions $f_j \in L^2(\R^n)$ such that $\|f_j \|_{L^2(\R^n)} =1$ and $\hat{f_j}$ is supported in an annulus $\{(1/C)R_j \leq |\xi| \leq CR_j\},$ with the property that 
 \[ \lim_{j \maps \infty} R_j^{-s} \| \sup_{0<t<1} |T_t^{(P_k)} f_j| \, \|_{L^1(B_n(0,1))} = \infty.\]
\end{thm}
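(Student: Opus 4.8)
\emph{Strategy.} The plan is to construct the functions $f_j$ explicitly from arithmetic data attached to a large prime $q=q_j\to\infty$, and to obtain the claimed blow-up by evaluating the maximal function at one carefully chosen rational time, where the relevant oscillatory integral collapses to a complete exponential sum modulo $q$ whose size is controlled --- in \emph{both} directions --- by the Weil bound.

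\emph{Step 1: the test functions.} Fix a large prime $q$ and a frequency scale $R=R_j\asymp q^{m}$, where the power $m$ together with an auxiliary cap‑scale parameter are left free and fixed only at the end by optimization. Take $\hat f_j$ to be a suitably normalized indicator of a union of pairwise disjoint congruent boxes (``caps'') inside the annulus $\{|\xi|\asymp R\}$, with the caps centered along an arithmetic progression whose spacing is dictated by $q$, and with side‑lengths small enough that, at the single special time $t_0$ we shall use, the contribution of the curvature of $P_k$ on each individual cap to the phase $t_0P_k(\xi)$ is negligible, while at the same time the collection of caps is arranged so that the sum over caps will become a genuine complete exponential sum mod $q$. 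Since $\hat f_j$ is a constant times the indicator of a set of explicit measure, the requirement $\|f_j\|_{L^2(\R^n)}=1$ costs only a square root of that measure, and the support lies in the required annulus by construction.

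\emph{Step 2: reduction to an exponential sum.} Choose $t_0$ to be a rational whose denominator is $q$ (up to a fixed factor absorbing the cap scale), and evaluate $T_{t_0}^{(P_k)}f_j(x)$ for $x$ in small neighborhoods of suitable rational points. Using the arithmetic compatibility between $t_0$ and the cap lattice --- so that all cross terms between the lattice variable and the cap‑scale variable, and the deeper Taylor terms of $P_k$, become integers or are negligibly small modulo $1$ --- the main term of $T_{t_0}^{(P_k)}f_j(x)$ factors (over the coordinate directions carrying the arithmetic) into a product of complete exponential sums
\[
S_k(a,b;q):=\sum_{u\bmod q}e^{2\pi i(au^k+bu)/q},
\]
with $a$ fixed and coprime to $q$, and $b$ running over residues determined by which rational point $x$ sits near. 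The deviation of $x$ from the exact rational points, and the truncation of each cap to a genuine box (so that $f_j\in L^2$ rather than a sum of Dirac masses), produce incomplete‑sum errors; these too are bounded by the Weil bound and are arranged to be of strictly lower order.

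\emph{Step 3: Weil bound, the good set, and optimization.} The Weil bound --- a consequence of the Riemann Hypothesis over finite fields --- gives $|S_k(a,b;q)|\le(k-1)\sqrt q$ for $q\nmid a$ (taking $q>k$ makes $au^k+bu$ of genuine degree $k$ over $\F_q$). On the other hand, orthogonality of additive characters yields the exact identity $\sum_{b\bmod q}|S_k(a,b;q)|^2=q^2$. Feeding the Weil \emph{upper} bound into this second moment by a Paley--Zygmund / reverse‑Markov argument forces $|S_k(a,b;q)|\ge\sqrt{q/2}$ for at least $\asymp_k q$ residues $b$ modulo $q$. Hence the ``good set'' $\mathcal{G}_j\subset B_n(0,1)$ of those $x$ with $|T_{t_0}^{(P_k)}f_j(x)|\gtrsim H_j$, where $H_j$ is an explicit monomial in $q$ built from the cap volume and the factors $\sqrt q$, has measure $|\mathcal{G}_j|$ bounded below by another explicit monomial, so that
\[
\big\|\sup_{0<t<1}|T_t^{(P_k)}f_j|\big\|_{L^1(B_n(0,1))}\ \ge\ \int_{\mathcal{G}_j}|T_{t_0}^{(P_k)}f_j|\ \gtrsim\ H_j\,|\mathcal{G}_j|.
\]
Recalling $R_j\asymp q^m$, one then optimizes the free exponents --- the power $m$ together with the cap side‑lengths, subject to the disjointness and negligible‑curvature constraints of Step 1 and the lower‑order‑error constraints of Step 2 --- and this optimization returns exactly $s=\tfrac14+\tfrac{n-1}{4((k-1)n+1)}$: for any $s$ strictly below this value, $R_j^{-s}H_j|\mathcal{G}_j|\to\infty$ along $q_j\to\infty$, which is Theorem~\ref{thm_main}.

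\emph{The main obstacle} is the interlocking set of constraints in Steps 1--2 feeding the optimization in Step 3: the cap geometry must be chosen so that simultaneously (a) the caps are disjoint and fit inside the annulus of radius $R$; (b) the curvature of $P_k$ does not destroy coherence within each cap at time $t_0$; (c) all cross terms between the cap‑scale and lattice variables collapse modulo $1$, so that a bona fide complete exponential sum mod $q$ of true degree $k$ emerges (absent which the Weil bound is vacuous); and (d) the product $H_j|\mathcal{G}_j|$ is as large as the constraints allow. These requirements pull against one another, and it is precisely this tension that produces the factor $(k-1)n+1$ in the denominator. The one genuinely new ingredient --- and the place where the Riemann Hypothesis over finite fields is used essentially rather than cosmetically --- is the deployment of the Weil bound as an \emph{upper} bound inside a second‑moment argument to guarantee that a \emph{positive proportion} of the sums $S_k(a,b;q)$ are large; for $k=2$ this step is unnecessary, since Gauss sums have modulus exactly $\sqrt q$, which is why the Schr\"odinger case was already accessible by classical means.
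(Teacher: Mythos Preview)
Your high-level strategy is on target, and you have correctly identified the genuinely new ingredient: feeding the Weil \emph{upper} bound into the second-moment identity $\sum_{b}|S_k(a,b;q)|^2=q^2$ to force a positive proportion of the sums to be $\gg\sqrt q$. This is exactly Proposition~\ref{prop_sum_big} and Corollary~\ref{cor_T_big} in the paper.

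However, there is a genuine gap. You fix a \emph{single} prime $q$ and a \emph{single} time $t_0$, whereas the paper lets $q$ range over \emph{all} primes in a dyadic window $[Q/2,Q]$ and lets $t$ depend on $x$. This is not cosmetic. With a single prime, the good set in $y$-space is a union of $\asymp q^n$ boxes of volume $\asymp q^{-1}\cdot (q^{-1-1/(n-1)})^{n-1}=q^{-(n+1)}$, hence has measure $\asymp q^{-1}$; that lost factor of $Q$ propagates through the optimization and yields only $s<\tfrac14-\tfrac{n-1}{4((k-1)n+1)}$, which does not even recover the classical $1/4$ barrier. The paper recovers the missing factor by taking the union over $\asymp Q/\log Q$ primes and proving (Lemma~\ref{lemma_union_gen}, Proposition~\ref{prop_box_omega}) that boxes centred at rationals with \emph{distinct prime} denominators are well enough separated that the measure of the union is comparable to the sum of the individual measures, namely $\gg(\log Q)^{-1}$. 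This well-distribution argument has no analogue in your sketch.

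A second structural difference: the paper does \emph{not} fix $t_0$. It singles out the first coordinate, placing a single wide cap there (width $S_1=R^{1/2}$) rather than an arithmetic progression, and for each $x$ chooses $t=-x_1/(kR^{k-1})+\tau$ so that simultaneously (i) the first-coordinate integral collapses to $\phi(\text{small})\approx 1$, and (ii) the degree-$k$ coefficient in the remaining $(n-1)$-variable sum becomes an exact rational $2\pi a_1/q$. Allowing $t$ to depend on $x_1$ is what lets $a_1$ (and $q$) vary over the good set, which is precisely what feeds into the union-over-primes argument above. With your fixed $t_0$, the leading coefficient $a$ is pinned to one residue, and you lose the freedom that makes $|\Omega^*|$ large. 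Your assertion that the optimization ``returns exactly'' the stated exponent therefore cannot be correct as written; the missing measure would have to be recovered by some other mechanism you have not described.
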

This immediately implies Theorem \ref{thm_main_max}, since if a function $f \in H^s(\R^n)$ is supported in such an annulus of radius $\approx R$, then $\|f\|_{H^s} \approx R^s \|f\|_{L^2};$ see \S \ref{sec_norm} for details.

To prove Theorem \ref{thm_main}, we construct for each large $R$ a   counterexample function  $f$ and a carefully chosen set  $\Omega^*$ of points $x \in B_n(0,1)$, such that for each   $x \in \Omega^*$, there is a choice of $t \in (0,1)$ for which $T_t^{(P_k)}f(x)$ can be well-approximated by an exponential sum of a certain length, which can in turn be well-approximated by a member of a family of ``large'' complete exponential sums modulo $q$, for primes $q$ in a well-chosen dyadic range. We then optimize the choices of all parameters in this construction---in particular, to ensure that simultaneously with all the above considerations, $\Omega^*$ has ``large'' measure in $B_n(0,1)$---and produce a counterexample  for each $s < \frac{1}{4} + \frac{n-1}{4((k-1)n+1)}$. From this bird's-eye-view, our method appears similar to Bourgain's  approach for the special case $P_2(\xi) = |\xi|^2$, which was rigorously  explained in the third author's work \cite{Pie20}.

But to succeed in a higher-degree setting, our method requires several completely new ideas. First, Bourgain's argument  relied only on Gauss sums, which can be evaluated by elementary methods \cite[Appendix B]{Pie20}. Exponential sums of higher degree polynomials are more complicated, and hence we require different methods to bound these sums from both above and below. We use an abstract argument in Proposition \ref{prop_sum_big} (and its corollaries) to show that ``most'' of the complete exponential sums we encounter are ``large.''  
 In particular, we capitalize on the Weil bound, which is a consequence of the truth of the Riemann Hypothesis over finite fields.

This argument shows that ``most'' sums are large, but does not identify \emph{which} sums are large. Consequently, this necessitates a much more abstract construction of the special set $\Omega^* \subset B_n(0,1)$ of points $x$ on which $\sup_{0<t<1}|T_t^{(P_k)}f(x)|$ can be shown to be large. We prove a very general result showing that if a collection of measurable sets is sufficiently well-distributed, then the measure of their union is comparable to the sum of the measures of the individual sets. We prove the general case in Lemma \ref{lemma_union_gen} and adapt it to our setting in Proposition \ref{prop_box_omega}. 
In particular, we use a number-theoretic argument to show that the boxes we construct are sufficiently well-distributed if they are centered at rationals with \emph{prime} denominators, yet another difference from the arguments in the quadratic case \cite{Bou16,Pie20}.  

These new strategies form a highly flexible framework, and we anticipate that they can be widely adapted to prove counterexamples for many maximal operators associated to dispersive PDE's.

\subsection{Related literature}\label{sec_lit}
Our work fits into a large family of questions about dispersive PDE's of the form 
\begin{align}
    \partial_t u - i \Pcal(D) u =0, \quad &(x,t) \in \R^n \times \R \label{PDE}\\
    u(x,0) =f(x), \quad &x \in \R^n, \nonumber
\end{align}
for a function $u$ acting on $\R^n \times \R$ and an initial data function $f$, where $D = \frac{1}{i}(\frac{\partial}{\partial x_1},\ldots,\frac{\partial}{\partial x_n})$, and $\Pcal(D)$ is defined according to its real symbol by 
\[ \Pcal(D)f(x) = \frac{1}{(2\pi)^n} \int_{\R^n} e^{i x \cdot \xi} P(\xi) \hat{f}(\xi) d\xi.\]
Roughly speaking, to be dispersive, the function $P$ must   behave like $|\xi|^\al$ for some $\al>1$   as $|\xi| \maps \infty$. (The case   $\al=1$ corresponds to the wave equation, which has   different behavior.) 
One main type (Schr\"odinger type) corresponds to the case that  $P(\xi) = q(|\xi|^2) $ for some appropriate function $q$; for example, $P(\xi) = |\xi|^\al$ for $\al >1$ corresponds to a power of the Laplacian, and in particular the case $\al=2$ leads to the linear Schr\"odinger equation. 
A second main type (Korteweg-de Vries type) corresponds to the case that $n=1$, $u$ is real-valued, and $P(\xi)= \xi q(\xi^2)$ for some appropriate function $q$; this includes the (free) KdV equation with $P(\xi) = \xi^3,$ the Benjamin-Ono equation with 
 $P(\xi) = \xi|\xi|$, the intermediate long-wave equation, Smith equation, and others (see  \cite{ConSau88}, and also a high-dimensional Benjamin-Ono equation in \cite{HLRRW19}).
More generally, nonlinear versions are also of interest, in which (\ref{PDE}) contains some further term $F(u)$ that is nonlinear in the function $u$; but strategies to prove results about the nonlinear case often rely on deductions involving the linear case, so that the linear case remains of central interest.  

We note that the precise  condition required of $P$ for the initial value problem (\ref{PDE}) to be considered dispersive can vary. One classical criterion appears in Constantin-Saut \cite[Eqns. (0.4)-(0.6)]{ConSau88}. A less restrictive criterion is developed by Kenig-Ponce-Vega \cite[Thm. 4.1]{KPV91}; our polynomial $P_k(\xi)$ satisfies the criterion of Kenig-Ponce-Vega.

A major focus in the study of the initial value problem (\ref{PDE}) is proving local smoothing; this refers to a phenomenon where the solution $u$ to the initial value problem is (locally) smoother than the initial data function $f$.
Quantitatively, for an equation of the form (\ref{PDE}) with $P(\xi)$ behaving (roughly) like $|\xi|^\al$ with $\al>1$ as $|\xi| \maps \infty$, local smoothing is a statement of the following form: if $f \in H^s(\R^n)$ then for a.e. $t \neq 0$, $u(\cdot,t) \in H_{\mathrm{loc}}^{s+\mu}(\R^n)$ where $\mu = (\al-1)/2$. Thus when $\al$ is larger, so that the differential equation (\ref{PDE}) is more dispersive, the local smoothing effect in the $x$ variable is stronger. (The effect is only local since  $e^{i tP(\xi)}$ has unit norm for every $t \in \R$, so that the solutions of the dispersive equation give a unitary group on the Sobolev space $H^s(\R^n) = W^{s,2}(\R^n);$ in particular for each $s$ the global $H^s(\R^n)$ norm of $u(\cdot,t)$ cannot differ from that of $f$.)
\begin{remark}\label{remark_dispersive}
Fix $n \geq 2$.
In Theorem \ref{thm_main_max} we prove that if (\ref{max_op}) holds, then $s \geq 1/4 + \del(k,n)$ for a value $\del(k,n)>0$ that decreases as $k$ increases. In our method, this decrease in $\del(k,n)$ for large $k$ is due to the tighter neighborhood we must impose on $t$ in (\ref{eqn: t condition 1 kpurediag}), (\ref{eqn: t condition 2 kpurediag}) as a result of needing to remove degree $k$ terms from the phase of an integral before applying Fourier inversion. This then imposes smaller neighborhoods in $y_1$ when we construct the set $x \in \Omega^*$ in (\ref{Omega_dfn}), and ultimately a tighter constraint on the parameter $Q$ in (\ref{conditions}). But it could be the truth that $\del(k,n)$ should decrease with $k$, given that the local smoothing effect increases as the dispersive effect increases with $k$.
\end{remark}

The literature on local smoothing (which also has connections to well-posedness, Strichartz estimates, and restriction theory) is far too vast to survey here, but we mention for example the influential works of Kato \cite{Kat83}, Constantin and Saut \cite{ConSau88}, and  Kenig, Ponce and Vega \cite{KPV91}. In particular, \cite[Theorem 4.1]{KPV91} proves local smoothing of the above type in a setting that includes the polynomial $P_k(\xi) = \sum_j \xi_j^k$ we study. 
The connection between space-time estimates for the Schr\"odinger operator and restriction theory is implicit in many of these articles, but for a few examples see e.g. Kenig, Ponce and Vega \cite{KPV91}, Moyua, Vargas, and Vega \cite{MVV96,MVV99}, Rogers \cite{Rog08}, and in particular the explicit connection derived in Lee, Rogers and Seeger \cite{LRS13}.

\subsubsection{Convergence results}\label{sec_lit_conv}
So far this has mentioned local smoothing with respect to $x$. Proving that for all $f \in H^s(\R^n)$, the pointwise convergence result (\ref{ptwiseP}) holds, additionally requires understanding regularity of $u(\cdot,t)$ in $t$. 
This is best understood when the symbol $P$ is of degree 2, including the important cases of the linear Schr\"odinger equation and the non-elliptic Schr\"odinger equation. 
For all higher-degree symbols,  the previous literature left open the convergence question for $1/4 \leq s \leq 1/2$ in  dimensions $n \geq 2$. 
For clarity, we  briefly give specific citations, to highlight the context of breaking the 1/4 barrier in Theorem \ref{thm_main_max}.

(a) The symbol  $P_2(\xi) = |\xi|^2$: in this case (\ref{PDE}) is the linear Schr\"odinger equation. For $n=1,$ (\ref{ptwiseP}) holds if and only if $s \geq 1/4$ by Carleson \cite{Car80} and Dahlberg-Kenig \cite{DahKen82}.
The convergence question in dimensions $n \geq 2$ has a long history, including works by Carbery \cite{Car85}, Cowling \cite{Cow83}, Sj\"olin \cite{Sjo87}, Vega \cite{Veg88}, Bourgain \cite{Bou95}, Moyua-Vargas-Vega \cite{MVV96}, Tao-Vargas \cite{TaoVar00}, Lee \cite{Lee06}, Bourgain \cite{Bou13}, Luc\`a-Rogers \cite{LucRog17}, Demeter-Guo \cite{DemGuo16}, Bourgain \cite{Bou16}, Luc\`a-Rogers \cite{LucRog19}, Du-Guth-Li \cite{DGL17}, and Du-Guth-Li-Zhang \cite{DGLZ18}.
 Bourgain \cite{Bou16} and Du-Zhang \cite{DuZha19} resolved the question (up to the endpoint): (\ref{ptwiseP}) holds if $s > s_2^*(n) = n/(2(n+1))$ and fails if $s< s_2^*(n)$.

(b) The symbol $P_2^-(\xi) :=\xi_1^2 - \xi_2^2 \pm \xi_3^2 \pm \cdots \pm \xi_n^2:$ 
in this case (\ref{PDE}) is the  non-elliptic Schr\"odinger equation (and when $P_2^-$ has only one change of sign, $\Pcal(D)$ is the box operator $\square$). 
Then by Rogers, Vargas, and Vega \cite{RVV06}, for all $n \geq 2$ (\ref{ptwiseP}) fails if $s<1/2$ and holds for all $s>1/2$. In that work they also note that for $n=2$, (\ref{ptwiseP}) holds for $s=1/2$, due to an observation of E. M. Stein (see the proof in \cite[p. 1900]{RVV06}).
It is interesting that this behavior differs significantly from the elliptic case. 

(c) The symbol $P: \R^n \maps \R$ is a polynomial of degree $2$:
 for any $n \geq 1$,   (\ref{ptwiseP}) holds for all $s> 1/2$ by Rogers, Vargas and Vega \cite[Thm. 1.2 and 2.1]{RVV06} (recording a method of \cite{Veg88b}). It fails for $s < 1/4$ by several methods, e.g. Sj\"olin \cite{Sjo98} (see Remark \ref{remark_Sjo}).
 Aside from the special cases $n=1$ or $P_2(\xi)$ and $P_2^-(\xi)$ for $n \geq 2$, the convergence question is open for $1/4 \leq s \leq 1/2$.

(d) The symbol $P(\xi) =|\xi|^\al$ for real $\al>1$ on $\R^n$:  Sj\"olin \cite[Thms. 2-5]{Sjo87} proved that for $n=1,2$ (\ref{ptwiseP})  holds  if $s \geq n/4$ and for $n \geq 3$ it holds if $s> 1/2$. For all $n \geq 1$ it fails if $s < 1/4$. This is also proved in Vega \cite[Thm. 1']{Veg88}; similar estimates also appear in Constantin and Saut \cite{ConSau88}. In dimensions $n \geq 2$,  for $P(\xi) = |\xi|^\al$  with $\al >1$, $ \al \neq 2$, the convergence question is open  for $1/4 \leq s \leq 1/2$ (or $1/4 \leq s < 1/2$ if $n=2$). 

(e) The symbol $P: \R^n \maps \R$ is a polynomial of degree $k \geq 2$:
this is the case in which our polynomial $P_k(\xi)$ lies.
In the case $n=1$,  Kenig, Ponce and Vega \cite[Cor. 2.6]{KPV91} prove that (\ref{ptwiseP}) holds for any polynomial $P$ of degree $k \geq 2$ and $s>1/4$ (and even when $P$ is replaced by $R((\xi))^\al$ with $\al \neq 0$, for a rational function $R$); it fails if $s<1/4$ by \cite{DahKen82}, \cite{KenRui83}. 
 For all $n \geq 1$, Ben-Artzi and Devinatz \cite[Thm. D]{BenDev91}  prove (\ref{ptwiseP}) for all $s>1/2$, for any real polynomial $P$ of principal type of order $\al$ for $\al>1$ (that is, such that $|\nabla P(\xi)| \gg (1+|\xi|)^{\al-1}$ for all sufficiently large $|\xi|$).
Furthermore, Rogers, Vargas, Vega prove (\ref{ptwiseP}) holds for all $s>1/2$ if $P: \R^n \maps \R$ is a member of an appropriate class of differentiable functions, which in particular includes  polynomials \cite[Remark 2.2]{RVV06}. 
The convergence property (\ref{ptwiseP}) fails for $s < 1/4$ by several methods, including e.g. Sj\"olin \cite{Sjo98} (see Remark \ref{remark_Sjo}).
 This left the convergence question in the range $1/4 \leq s \leq 1/2$ open, until the present paper.

We further remark that  convergence problems like (\ref{ptwise}) and (\ref{ptwiseP}) are also being studied from many more perspectives. For example: in relation to non-tangential convergence \cite{SjoSjo89}; when $t$ varies in a set defined according to a complex parameter \cite{Sjo09}, \cite{SjoSor10}; convergence along restricted directions or variables curves \cite{CLV12}; and along $t$ belonging to various types of countable sequences $\{t_n\}$ of points in $(0,1)$, see e.g. \cite{Sjo19JFAA}, \cite{SjoStr20}, \cite{DimSee20}, \cite{SjoStr21},
or certain uncountable sets \cite{SjoStr20}.
There are also interesting recent studies related to pointwise convergence of solutions of analogous PDE's in other settings: on the torus \cite[\S 5]{KPV91}, \cite{Veg92}, \cite{MoyVeg08}; on manifolds \cite{WanZha19}; and for the nonlinear Schr\"odinger flow \cite{CLS21}.

Finally, we note that Bourgain's work on counterexamples for the Schr\"odinger maximal operator associated to
 $P_2(\xi) = |\xi|^2$  stimulated a number of new works for quadratic symbols, including the study of divergence on sets of lower-dimensional Hausdorff measure, for example in
\cite{BBCR11}, \cite{LucRog19a}, \cite{LucRog19}, \cite{LucPon21}, and the study of the failure of local maximal estimates analogous to (\ref{max}) in higher $L^p$ spaces  \cite{DKWZ20}. We anticipate that the methods of the present paper can be adapted to study higher-degree analogues of many such questions.

 \subsubsection{Maximal operators}\label{sec_lit_max}
Our main theorem is a statement about a maximal operator. This is closely related to the literature on convergence results mentioned above, since by classical arguments,  appropriate maximal estimates imply convergence results, and the failure of certain maximal estimates implies the failure of certain convergence results (see e.g. \cite[Appendix A]{Pie20}).
But additionally, there is a broad literature on maximal operators in their own right.

For any given symbol $P$,  several types of maximal estimates are typically studied. One can study the maximal operator $\sup_{t \in I} |T_t^{(P)}f|$ for a bounded interval $I$, or for an infinite interval $I$; one can ask whether a local norm of this operator is bounded, or a global norm; one can consider $f$ in an $L^2$ Sobolev space such as $H^s =W^{s,2}$ or in an $L^q$ Sobolev space $W^{s,q}$ for $q>2.$ Further  questions study not the maximal operator in $t$, but $L^q$-means over $t$ in some compact interval $I$. 

Our result, for a \emph{local} $L^1$ norm of the maximal operator with $t$ in the \emph{bounded} interval  $(0,1)$, and with initial data $f \in H^s(\R^n)$,   is a strong result in the hierarchy of types described above. In particular, for a given $s$, our result that (\ref{max_op}) \emph{fails} implies that the corresponding inequality must also fail for the (larger) maximal operator over $0< t <\infty$; for the (larger) global $L^1(\R^n)$ norm; for the (larger) local $L^p(B_n(0,1))$ norm for all $p >1$.

To situate our results in the large literature on maximal operators, we highlight here a few of the most relevant papers.

(a) The paper that lies closest to our maximal estimates for   $P_k(\xi) = \sum_j \xi_j^k$ with $k \geq 3$ is by Sj\"olin \cite{Sjo98}. Sj\"olin studies local $L^q(B_n(0,1))$ norms for the operator $f \mapsto \sup_{0<t<1}|T_t^{(\Phi)}f(x)|$ 
for functions $\Phi(\xi) = \phi_1(\xi) + \cdots + \phi_\ell(\xi),$
where each $\phi_j$ is a real-valued $\mathcal{C}^2(\R^n \setminus\{0\})$ function that is homogeneous of degree $a_j$, where $0 \leq a_1< a_2< \cdots < a_{\ell-1} \leq a_\ell -1/2$. Under the assumption that $a :=a_\ell \geq 1$ and $\phi_\ell$ does not vanish identically, Sj\"olin proves that if 
\beq\label{maxSjo}
\| \sup_{0<t<1} |T_t^{(\Phi)}f| \|_{L^q(B_n(0,1))} \ll_{q,s} \|f\|_{H^s(\R^n)} 
\eeq
for all $f \in H^s(\R^n)$ then
\beq\label{condSjo} s  \geq \frac{n}{4}- \frac{n-1}{2q}.
\eeq
If $n=1$ then this is the requirement $s \geq 1/4$. In the setting of our Theorem \ref{thm_main_max} ($n\geq 2$ and $q=1$), the condition (\ref{condSjo}) provides no nontrivial lower bound on $s>0$. 

\begin{remark}\label{remark_Sjo} We recall that standard arguments show that for a fixed real symbol $P$, if the convergence result (\ref{ptwiseP}) holds for all $f \in H^s(\R^n)$, then (\ref{maxSjo}) must hold for $q=2$ (see \cite[Appendix A]{Pie20}).  
Sj\"olin's result (\ref{condSjo}) at $q=2$ confirms that for all symbols covered by his methods, for the convergence result to hold, $s \geq 1/4$ is necessary, for all dimensions $n \geq 1$. (We also recover this; see Remark \ref{remark_quarter}.) 
\end{remark}

 (b) Many works have considered the maximal operator associated to $P_2(\xi) = |\xi|^2$ in terms of global $L^q(\R^n)$ norms for various $q$, for $f \in H^s$.
This has been studied both for the local case $0<t<1$ and the global case $t \in \R$ for $q=2$ \cite{Sjo94}, and for other $q$ by Sj\"olin \cite{Sjo97}; sharp results for $q \neq 2$ are obtained in Rogers and Villarroya \cite{RogVil07}.
See Sj\"olin \cite{Sjo07} and Rogers, Vargas, and Vega \cite{RVV06} for the non-elliptic case $P_2^-(\xi),$ for $f \in H^s$ and various $L^q(\R^n)$ global norms, corresponding to case (b) in \S \ref{sec_lit_conv}.
The equivalent problems with homogeneous Sobolev spaces $\dot{H}^s$ have been studied as well; see e.g. \cite{Sjo02}.

(c) For the case $P(\xi) = |\xi|^a$ with $a>1$,
Sj\"olin has  characterized for which $s, q$ (\ref{max_op}) can hold when $n=1$, in terms of the local supremum and local $L^q$ norm \cite{Sjo97}.
For $n\geq 2$, maximal operators over $0<t<1$  have been extensively studied for both the local and global norm,  for $f$ radial  \cite{Sjo95,Sjo97,Sjo11}; see also Wang \cite{Wan97, Wan06}.
More recently, Sj\"olin \cite{Sjo07} has also considered the case $P(\xi) = \sum_j \pm |\xi_j|^{\al}$, $\al>1$, but for $t\in \R$ and for $L^q(\R^n)$ global norms, which have quite a different flavor from our result, since for such global norms, standard homogeneity arguments place tight restrictions on $q$ relative to $n,s$ (see e.g. arguments in \cite[\S 2.4]{Sjo07}).

There are many further investigations of maximal operators that generalize the Schr\"odinger setting in other ways. For example:
bounds for multiparameter analogues of maximal Schr\"odinger operators are considered in \cite{SjoSor14};  Rogers and Villarroya \cite{RogVil08} prove sharp results for the maximal operator associated to the wave equation;
  bounds of $L^q$-means with respect to $t$ (rather than a supremum over $t$), are studied for example by Rogers and Rogers--Seeger \cite{Rog08}, \cite{RogSee10}.
Finally, of course, many of the convergence results mentioned in the previous section are in fact stated in terms of results for maximal operators.

\subsection{Outline of the paper}
 In Section \ref{sec_exp_sums} we prove upper and lower bounds for exponential sums, which are the critical ingredients to force  $\sup_{0<t<1}|T_t^{(P_k)}f(x)|$ to be large for many values  of $x$.  In Section \ref{sec_reduction} we construct a family of functions $f$, according to certain parameters,   and reduce the study of $T_t^{(P_k)}f(x)$ to an exponential sum. In Section \ref{sec_Omega} we motivate our choice for the set $\Omega^*$ of points $x$ for which there exists a choice of $0<t<1$ such that $|T_t^{(P_k)}f(x)|$ is large. We then give an abstract proof to show that the measure of $\Omega^*$ is sufficiently large. In Section \ref{sec_arithmetic} we use our explicit choice of the set $\Omega^*$ to evaluate the exponential sum and bound the error terms. Finally, in Section \ref{sec_choices} we assemble all of these constructions and make the optimal choices of parameters that prove Theorem \ref{thm_main} and hence Theorem \ref{thm_main_max}.

 \subsection{Notation}

 We use the  notation $A\ll B$ to indicate that there is a constant $C$ such that $|A| \leq C B$; the notation $A \ll_\al B$ indicates that the constant $C$ may depend on the parameter $\al$. In general, we will allow implicit constants to depend on  the dimension $n$, the degree $k$, and a $C^\infty$ function  $\phi$ that is fixed once and for all. We will denote certain small constants we can freely choose by $c_1, c_2, c_3,\ldots$. Since we will use our ability to choose them to our advantage, we will denote them explicitly in inequalities in which their small size plays a role.

We let $B_m(c,r)\subseteq \R^m$ denote the Euclidean ball centered at $c$ and of radius $r$ and let $A_m(R,C)\subseteq \R^m$ denote the annulus $B_m(0,CR)\backslash B_m(0,R/C)$. 
 For a finite set $\Acal$ we let $|\Acal|$ denote its cardinality; for a   Lebesgue measurable set $\Omega$, we let $|\Omega|$ denote its Lebesgue measure.

We follow the convention in \cite{Bou16} of letting $e(t) = e^{it}.$ Correspondingly, we use the normalization for the Fourier transform that   $\hat{f}(\xi) = \int_{\R^m} f(x) e^{-i x \cdot \xi} d x$ and  $f(x) = (2\pi)^{-m}\int_{\R^m}\hat{f}(\xi)e^{i x \cdot\xi} d\xi$. Then Plancherel's theorem states  that $\|f\|^2_{L^2(\R^m)} = (2\pi)^{-m}\|\hat{f}\|^2_{L^2(\R^m)}$. 
The Sobolev space $H^s(\R^n)$ is the set of $f \in \Scal'(\R^n)$ with finite Sobolev norm
\beq\label{Sobolev}
\|f\|_{H^s(\R^n)}^2 = \frac{1}{(2\pi)^n}\int_{\R^n} (1+|\xi|^2)^{s}|\hat{f}(\xi)|^2 d\xi.
\eeq

\section{Upper and lower bounds for exponential sums}\label{sec_exp_sums}
We will use complete exponential sums  to show that for each $x$ in a certain set $\Omega^* \subset B_n(0,1),$ we can choose a $t\in (0,1)$ to make $|T_t^{(P_k)} f(x)|$ large.
 We first prove Proposition \ref{prop_sum_big}, which shows that for each prime $q$, a sufficiently numerous collection of complete exponential sums modulo $q$ is large. 
Second, in order to bound the contribution of certain error terms from above, we  develop an upper bound for exponential sums in Proposition \ref{prop_sum_approx}.
Both our lower and upper bounds rely on the Weil bound, which we now state.

The Weil bound is a consequence of Deligne's proof of the Weil conjectures \cite{Del74}; we  cite this in the form provided by \cite[Thm. 11.43]{IwaKow04};   $\mathrm{Tr}$ denotes the trace function from $\F_{q^n}$ to $\F_q$.
 \begin{lemma}[Weil bound]
  Let $f \in \Z[X_1,\ldots, X_m]$ be a nonzero polynomial of degree $k$ such that the hypersurface $H_f$ in $\mathbb{P}^{m-1}$ defined by the equation 
$H_f: f_k(x_1,\ldots,x_m)=0$
 is nonsingular, where $f_k$ is the homogeneous component of $f$ of degree $k$. For any prime $q \ndiv k$ such that the reduction of $H_f$ modulo $q$ is smooth, any nontrivial additive character $\psi$ modulo $q$ and any $n \geq 1$, 
  \[ \Bigg| \sum_{x_1, \ldots, x_m \in \F_{q^n}} \psi (\mathrm{Tr} (f(x_1,\ldots,x_m))) \Bigg| \leq (k-1)^m q^{nm/2}.\]
  \end{lemma}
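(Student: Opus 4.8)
This is a deep theorem whose only known proof passes through Deligne's resolution of the Weil conjectures, so what follows is a sketch of the standard cohomological argument (the form recorded in \cite[Ch.~11]{IwaKow04}, ultimately \cite{Del74}) rather than a self-contained proof; in the paper this lemma is used purely as a cited input and is not reproduced. The plan is: (1) realize the exponential sum as an alternating sum of traces of Frobenius on $\ell$-adic cohomology via the Grothendieck--Lefschetz trace formula; (2) use the smoothness hypothesis on $H_f$ to show the cohomology is concentrated in middle degree and to compute its dimension; and (3) invoke Deligne's bound on the weights of Frobenius eigenvalues and assemble.

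First I would fix a prime $\ell \neq q$ and let $\mathcal{L}_\psi$ denote the Artin--Schreier sheaf on $\A^1_{\F_q}$ attached to $\psi$: a lisse $\overline{\Q}_\ell$-sheaf of rank one, pointwise pure of weight $0$, whose geometric Frobenius trace at $t \in \F_{q^n}$ is $\psi(\mathrm{Tr}_{\F_{q^n}/\F_q}(t))$. Pulling back along $f \colon \A^m_{\F_q} \maps \A^1_{\F_q}$ gives a lisse rank-one sheaf $\mathcal{F} := f^*\mathcal{L}_\psi$ on $\A^m$, again pointwise pure of weight $0$, with $\mathrm{Tr}(\mathrm{Frob}_x \mid \mathcal{F}) = \psi(\mathrm{Tr}(f(x)))$ for $x \in \A^m(\F_{q^n})$. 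The trace formula then gives, for every $n \geq 1$,
\[
\sum_{x \in \F_{q^n}^m} \psi(\mathrm{Tr}(f(x))) = \sum_{i=0}^{2m} (-1)^i \,\mathrm{Tr}\bigl(\mathrm{Frob}_q^{\,n} \mid H^i_c(\A^m_{\overline{\F_q}}, \mathcal{F})\bigr).
\]
Next I would establish the two pieces of geometric input. \emph{(i) Concentration in middle degree:} $H^i_c(\A^m_{\overline{\F_q}}, \mathcal{F}) = 0$ for $i \neq m$. Vanishing for $i > m$ is Artin vanishing for a constructible sheaf on an affine variety of dimension $m$. For $i < m$ the point is that $f^*\mathcal{L}_\psi$ acquires no low-dimensional cohomology because $f$ is suitably nondegenerate at infinity: compactifying $\A^m \hookrightarrow \mathbb{P}^m$ and analyzing the ramification of $f^*\mathcal{L}_\psi$ along the hyperplane at infinity $\mathbb{P}^{m-1}$, the hypotheses that $H_f \colon f_k = 0$ is smooth and $q \ndiv k$ ensure the sheaf is ``clean'' there, and the Euler--Poincar\'e formula then forces $H^i_c = 0$ for $i < m$. \emph{(ii) Dimension count:} with only $H^m_c$ surviving, $\dim_{\overline{\Q}_\ell} H^m_c = (-1)^m \chi_c(\A^m_{\overline{\F_q}}, \mathcal{F})$, and the same analysis at infinity evaluates this Euler characteristic, yielding $\dim H^m_c = (k-1)^m$.

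Finally I would invoke Deligne's main theorem of Weil II: since $\mathcal{F}$ is pure of weight $0$, the group $H^m_c(\A^m_{\overline{\F_q}}, \mathcal{F})$ is mixed of weight $\leq m$, so every eigenvalue $\alpha$ of $\mathrm{Frob}_q$ on it satisfies $|\alpha| \leq q^{m/2}$ under every embedding $\overline{\Q}_\ell \hookrightarrow \C$. Combining this with the trace formula and the dimension bound,
\[
\Bigl| \sum_{x \in \F_{q^n}^m} \psi(\mathrm{Tr}(f(x))) \Bigr| = \bigl| \mathrm{Tr}\bigl(\mathrm{Frob}_q^{\,n} \mid H^m_c\bigr) \bigr| \leq \dim H^m_c \cdot \bigl(q^{m/2}\bigr)^{n} = (k-1)^m q^{nm/2}.
\]

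The main obstacle is the geometric step (i)--(ii): showing that the smoothness of the projective hypersurface $H_f$ together with $q \ndiv k$ is exactly what kills all cohomology outside the middle degree and pins the middle-degree dimension to $(k-1)^m$ — this is the Katz--Deligne analysis of the behavior of $f^*\mathcal{L}_\psi$ along the divisor at infinity. Beyond that, the whole inequality rests on Deligne's purity theorem (the Riemann Hypothesis over finite fields), which is used here as a black box.
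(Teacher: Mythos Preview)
Your reading is correct: the paper does not prove this lemma at all. Immediately before the statement it says ``The Weil bound is a consequence of Deligne's proof of the Weil conjectures \cite{Del74}; we cite this in the form provided by \cite[Thm.~11.43]{IwaKow04},'' and then applies it (in the special case $m=n=1$) as a black box. So there is no ``paper's own proof'' to compare against, and you correctly anticipated this.

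Your sketch is the standard cohomological route and is accurate in outline: realize the sum via the trace formula for $f^*\mathcal{L}_\psi$ on $\A^m$, use the smoothness of $H_f$ and $q \nmid k$ to get concentration in $H^m_c$ with $\dim H^m_c = (k-1)^m$, and bound the Frobenius eigenvalues by $q^{m/2}$ via Deligne. One small quibble: the vanishing for $i>m$ on an affine $m$-fold is the cohomological-dimension bound for $H^i_c$ (equivalently Poincar\'e duality plus Artin vanishing for $H^{2m-i}$ of the dual), not Artin vanishing directly, which is a statement about $H^i$ rather than $H^i_c$. Otherwise the architecture is right, and your honest flagging of step (i)--(ii) as the real geometric content (the Katz--Deligne analysis at infinity) is on point.
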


 We apply this in the case of $q$ prime,  $q \ndiv k$, $m=1$,  $n=1$, and $f = P\in \Z[X]$  a polynomial of degree $k \geq 2$, with leading coefficient $c_k$. Then  $f_k(x) = c_k x^k$, where we assume that $q \ndiv c_k$, so that $H_f$ is nonsingular. Then the Weil bound is
 \beq\label{Weil}
 \Bigg|\sum_{x \modd{q}} e(2\pi  P(x)/q) \Bigg|\leq (k-1) q^{1/2}.\eeq

\subsection{Large values of  exponential sums with rational coefficients}

In what follows, we use the convention $\underline{u} \modd{q}^s$ to indicate that in each coordinate of $\underline{u} = (u_1,\ldots,u_s)$,  $u_i$ runs modulo $q$.

  We now show that a positive proportion of choices for integral coefficients  lead to a complete exponential sum modulo $q$ of size $\gg q^{1/2}$ (which by (\ref{Weil}) is optimal, up to a constant).

\begin{prop}\label{prop_sum_big}
 Fix an integer $s \geq 2$ and  integers $1 = k_s  < \cdots < k_2 < k_1$.
For each integer $q$ and tuple $\underline{a} = (a_1,\ldots, a_s)$ let
\[T(\underline{a};q) =T(a_1,\ldots,a_s;q) := \sum_{n \modd{q}} e \bigg((a_1 n^{k_1} + \cdots + a_s n^{k_s})\frac{2\pi }{q} \bigg).\]
Then there exist constants $0<\al_1<1$ and $0<\al_2<1$ with $\al_2$ depending on $k_1$, such that for every prime $q \geq 3$ with $q \ndiv k_i$ for all $i \in \{1,\dots,s\}$, at least $\al_2 q^s$ choices of $\underline{a} \modd{q}^s$ have $|T(\underline{a};q)| \geq \al_1 q^{1/2}$. In fact, one can take $\al_1=1/2$ and $\al_2 = k_1^{-2}/4$.
\end{prop}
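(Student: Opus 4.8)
The plan is to run a second-moment argument in the coefficient vector $\underline a$: evaluate $\sum_{\underline a} |T(\underline a;q)|^2$ exactly by orthogonality, and then use the Weil bound (\ref{Weil}) to control the size of each individual summand $|T(\underline a;q)|$ away from $\underline a = \underline 0$. The combination of an exact mean square with a strong pointwise bound off the origin forces many of the $T(\underline a;q)$ to be close to the mean-square size $q^{1/2}$.

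First I would compute the second moment. Expanding $|T(\underline a;q)|^2$ and summing over $\underline a \bmod q^s$, orthogonality of additive characters modulo $q$ yields
\[
\sum_{\underline a \bmod q^s} |T(\underline a;q)|^2 = q^s \cdot \#\{(n,m)\bmod q^2 : n^{k_i} \equiv m^{k_i} \pmod q \text{ for all } i\}.
\]
Here the hypothesis $k_s = 1$ is essential: the congruence for $i=s$ reads $n \equiv m \pmod q$, which already forces all the others, so the inner count equals $q$ and the second moment is exactly $q^{s+1}$. (The intermediate exponents $k_i$ only ever tighten this count, so they play no harmful role.)

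Next I would apply the Weil bound for $\underline a \not\equiv \underline 0$. Setting $j = \min\{i : q \nmid a_i\}$, the polynomial $a_1 X^{k_1} + \cdots + a_s X^{k_s}$ reduces modulo $q$ to a polynomial of degree exactly $k_j$ with leading coefficient $a_j$; since $q$ is prime with $q \nmid k_j$ and $q \nmid a_j$, the hypotheses of (\ref{Weil}) are satisfied and $|T(\underline a;q)| \leq (k_j - 1)q^{1/2} \leq (k_1-1)q^{1/2}$ when $k_j \geq 2$, while if $k_j = 1$ (which forces $j = s$) the sum is that of a nontrivial additive character and vanishes. Hence $|T(\underline a;q)| \leq (k_1-1)q^{1/2}$ for every $\underline a \not\equiv \underline 0$. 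Now let $S = \{\underline a \bmod q^s : |T(\underline a;q)| \geq \tfrac12 q^{1/2}\}$, noting $\underline 0 \in S$ since $T(\underline 0;q) = q$. Splitting the second moment according to whether $\underline a$ is $\underline 0$, lies in $S\setminus\{\underline 0\}$, or lies outside $S$, and using $|T(\underline 0;q)|^2 = q^2$, the Weil bound on $S\setminus\{\underline 0\}$, and $|T(\underline a;q)|^2 < \tfrac14 q$ off $S$, I obtain
\[
q^{s+1} \leq q^2 + |S|(k_1-1)^2 q + q^s\cdot\tfrac14 q,
\]
so $|S|(k_1-1)^2 q \geq \tfrac34 q^{s+1} - q^2$, i.e. $|S| \geq \big(\tfrac34 q^s - q\big)/(k_1-1)^2$. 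Since $s \geq 2$ and $q \geq 3$ we have $q^{s-1} \geq 2$, whence $\tfrac34 q^s - q \geq \tfrac14 q^s$, and therefore $|S| \geq \tfrac14 q^s/(k_1-1)^2 \geq \tfrac14 k_1^{-2} q^s$ (using $k_1 \geq 2$, valid since $1 = k_s < \cdots < k_1$ with $s \geq 2$). This is the claim with $\alpha_1 = 1/2$ and $\alpha_2 = k_1^{-2}/4$.

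The step I expect to be the crux is recognizing that the Weil bound is exactly what makes this work: replacing $|T(\underline a;q)| \leq (k_1-1)q^{1/2}$ by the trivial bound $|T(\underline a;q)| \leq q$ would only give $|S| \gg q^{s-1}$, short by a full factor of $q$ of the $q^s$ we need. The remaining care is purely bookkeeping — keeping $\underline 0$ separate, handling the degenerate $k_j = 1$ case of (\ref{Weil}), and remembering that it is precisely the linear term $k_s = 1$ that collapses the diagonal count in the second-moment identity.
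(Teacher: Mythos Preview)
Your proof is correct and follows essentially the same approach as the paper: an exact second-moment identity $\sum_{\underline a}|T(\underline a;q)|^2 = q^{s+1}$ via orthogonality (using $k_s=1$ to collapse the diagonal), combined with the Weil bound $|T(\underline a;q)|\le (k_1-1)q^{1/2}$ for $\underline a\not\equiv\underline 0$, yielding $\alpha_1=1/2$ and $\alpha_2=k_1^{-2}/4$. The only cosmetic difference is that the paper runs the same computation as a proof by contradiction (assuming fewer than $\alpha_2 q^s$ large values and deriving $\sum|T|^2<q^{s+1}$), whereas you argue directly by solving the resulting inequality for $|S|$; the arithmetic is the same.
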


In the case that the exponents $k_s,k_{s-1},\ldots, k_1$ are $1, 2 ,\ldots, s$ (respectively), this is Theorem 14 of \cite{KniSok79}, also recorded as \cite[Lemma 2.4]{CheShp20}. The case of sparse exponents   is remarked upon in \cite[\S 6.1]{CheShp20}, and we thank Igor Shparlinski for communicating their method of proof. 
\begin{proof}
Parseval's theorem shows that 
\beq\label{S_id}
 \sum_{ \underline{a} \modd{q}^s} |T(\underline{a};q)|^2 = q^{s+1}. 
 \eeq
Indeed, expanding the left-hand side as 
\[ \sum_{\underline{a}  \modd{q}^s} \sum_{n \modd{q}} e \bigg( (a_1 n^{k_1} + \cdots + a_s n^{k_s})\frac{2\pi }{q}\bigg)\sum_{m \modd{q}} e \bigg(-(a_1 m^{k_1} + \cdots + a_s m^{k_s})\frac{2\pi }{q}\bigg)\]
and summing first over $a_1, \ldots, a_s \modd{q}$ we gain a contribution of $q^{s+1}$ precisely for those $n,m$ with $n\con  m \modd{q}$, which confirms the claim.

Suppose now that for certain constants $\al_1,\al_2>0$ (to be specified later), there are  $<\al_2 q^s$ choices of $\underline{a} \modd{q}^s$ with 
$|T(\underline{a};q)| \geq \al_1 q^{1/2}$. Then write
\[
 \sum_{\underline{a} \modd{q}^s} |T(\underline{a};q)|^2 
= q^2+ \sum_{\bstack{\underline{a}  \modd{q}^s, \underline{a}  \not\con 0 }{|T(\underline{a};q)| \geq \al_1 q^{1/2}}} |T(\underline{a};q)|^2 + \sum_{\bstack{\underline{a}  \modd{q}^s, \underline{a}  \not\con 0 }{|T(\underline{a};q)| < \al_1 q^{1/2}}} |T(\underline{a};q)|^2  \nonumber .\]
 Here the first term on the right-hand side is from $\underline{a}  \con 0 \modd{q}^s$, in which case $|T(\underline{a} ;q)|^2 = q^2$. In the second term, we can apply our assumption  to bound the number of values of $\underline{a}$ included in the sum, and then apply the Weil bound to each term $|T(\underline{a};q)|$.  
In particular the Weil bound  (\ref{Weil})   shows that if $k_i$ is the largest exponent for which $a_{i} \not\con 0 \modd{q}$, then $|T(\underline{a};q)| \leq (k_i-1) q^{1/2} \leq k_1 q^{1/2}$.  
Since the smallest exponent is $k_s =1$, this observation applies for all $\underline{a} \not\con 0 \modd{q}^s$ with some nonzero coefficient of a nonlinear term, that is, with $a_i \not\con 0 \modd{q}$ for some $i \in \{1,2,\ldots,s-1\}.$ 
For the remaining cases of $\underline{a} \not\con 0$, namely   $\underline{a} = (0,0,\ldots,a_s)$ with $a_s \not\con 0 \modd{q}$, we observe that
 \[ T(\underline{a};q) = \sum_{n \modd{q}} e(2\pi a_sn/q)=0,\]
 which also suffices.
We conclude that
 \[
 \sum_{\underline{a} \modd{q}^s} |T(\underline{a};q)|^2 
 < q^2 +  \al_2q^s ( k_1 q^{1/2})^2   + q^s (\al_1q^{1/2} )^2  \leq  (1/3 +\al_2 k_1^2 + \al_1^2)q^{s+1}.\]
Here we used the fact that if $q \geq 3$ then $q^2 \leq (1/3) q^{s+1}$ for all $s \geq 2$. 
Now we see that for any $\al_1,\al_2$ small enough that $(1/3 +\al_2 k_1^2 + \al_1^2)<1$,    we have obtained a contradiction to the identity (\ref{S_id}). Thus for any sufficiently small choices of $\al_1, \al_2$, 
there are $ \geq \al_2 q^s$ values of $\underline{a} \modd{q}^s$ such that $|T(\underline{a};q)| \geq \al_1 q^{1/2}$. In particular we may take  $\al_1=1/2$ and $\al_2 = k_1^{-2}/4$.  
 \end{proof}

In order to apply Proposition \ref{prop_sum_big}, we need the following corollary, which distinguishes the role of the highest-order coefficient. Here we let $\underline{a} = (a_1,a')$ with  $a_1 \in \Z$ and $a'\in \Z^{s-1}.$
\begin{cor}\label{cor_T_big}
Fix an integer $s \geq 2$ and  integers $1 = k_s  < \cdots < k_2 < k_1$. For each prime $q$ let $T(\underline{a};q) = T((a_1,a');q)$, and  specify $\al_1,\al_2$ to be as in  Proposition \ref{prop_sum_big}. 
Let $\Acal(q)$ denote the set of $\underline{a} \modd{q}^s$ such that $|T(\underline{a};q)| \geq \al_1q^{1/2}.$ 
For each $a_1 \modd{q},$ define the ``good set'' 
\[G(a_1) := \{ a' \modd{q}^{s-1}: (a_1,a') \in \Acal(q)\}.\] 
Suppose that $q\ge 3$ is a prime such that $q \ndiv k_i$ for all  $i \in \{1,\dots,s\}$.
Then for at least $(\al_2/2) q$ choices of $a_1 \modd{q},$ we have $|G(a_1)| \geq (\al_2/2) q^{s-1}.$
 
\end{cor}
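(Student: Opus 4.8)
The plan is to obtain the corollary from the single input $|\Acal(q)| \ge \al_2 q^s$ furnished by Proposition~\ref{prop_sum_big}, via a routine counting argument that sorts the elements of $\Acal(q)$ by their leading coordinate $a_1$. The starting point is the identity
\[
|\Acal(q)| \;=\; \sum_{a_1 \modd{q}} |G(a_1)|,
\]
which holds because every $\underline{a} = (a_1,a') \in \Acal(q)$ is counted exactly once, namely in the fiber over its own first coordinate $a_1$, and by definition $a' \in G(a_1)$ there.

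Next I would argue by contradiction. Suppose strictly fewer than $(\al_2/2)q$ residues $a_1 \modd{q}$ satisfy $|G(a_1)| \ge (\al_2/2)q^{s-1}$. Split the sum above into the ``large'' fibers (those with $|G(a_1)| \ge (\al_2/2)q^{s-1}$) and the ``small'' ones. For the large fibers, use the trivial bound $|G(a_1)| \le q^{s-1}$, since $G(a_1) \subseteq (\Z/q\Z)^{s-1}$, together with the assumed bound on their number; this contributes less than $(\al_2/2)q \cdot q^{s-1} = (\al_2/2)q^s$. For the small fibers, there are at most $q$ of them, each of size less than $(\al_2/2)q^{s-1}$, so they contribute less than $(\al_2/2)q^s$ as well. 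Adding the two estimates gives $|\Acal(q)| < \al_2 q^s$, contradicting Proposition~\ref{prop_sum_big}; hence at least $(\al_2/2)q$ values of $a_1$ must be large, as claimed.

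I do not anticipate a genuine obstacle here: all of the arithmetic content---in particular the use of the Weil bound (\ref{Weil})---has already been absorbed into Proposition~\ref{prop_sum_big}, and what remains is purely combinatorial bookkeeping. The only point deserving a moment's care is the calibration of constants: the threshold must be chosen so that the ``large-fiber'' and ``small-fiber'' contributions each consume at most half of $\al_2 q^s$, which is precisely why the same fraction $\al_2/2$ appears both as the proportion of good $a_1$ and as the lower bound for $|G(a_1)|$; any common value $\beta\al_2$ with $0<\beta\le 1/2$ would serve equally well. The hypotheses that $q\ge 3$ is prime with $q \nmid k_i$ for all $i$ are needed only in order to invoke Proposition~\ref{prop_sum_big} and play no further role in the argument.
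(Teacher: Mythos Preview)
Your proposal is correct and follows essentially the same approach as the paper: both arguments start from the fiber decomposition $|\Acal(q)| = \sum_{a_1} |G(a_1)|$, split into large and small fibers at the threshold $(\al_2/2)q^{s-1}$, bound the large fibers trivially by $q^{s-1}$ and the small ones by the threshold, and then compare against $|\Acal(q)| \ge \al_2 q^s$ from Proposition~\ref{prop_sum_big}. The only cosmetic difference is that the paper phrases this as a direct rearrangement (introducing a parameter $\al_3$ later set to $\al_2/2$) while you phrase it as a contradiction.
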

\begin{proof}

 For a fixed $0<\al_3<1$ to be determined later, write
\[ |\Acal(q)|  =  \sum_{a_1 \modd{q}} |G(a_1)| = \sum_{\bstack{a_1\modd{q}}{|G(a_1)| \geq \al_3q^{s-1}}} |G(a_1)|
    + \sum_{\bstack{a_1\modd{q}}{|G(a_1)| < \al_3q^{s-1}}} |G(a_1)|.\]
 By Proposition \ref{prop_sum_big} we know that $|\Acal(q)| \geq \al_2q^s.$ We can bound the last term on the right-hand side from above by $q\cdot \al_3 q^{s-1}.$  On the other hand, we always know that $|G(a_1)| \leq q^{s-1}$.  Thus after rearranging, we see that 
 \[   \sum_{\bstack{a_1\modd{q}}{|G(a_1)| \geq \al_3q^{s-1}}} q^{s-1} \geq  \sum_{\bstack{a_1\modd{q}}{|G(a_1)| \geq \al_3q^{s-1}}} |G(a_1)|  \geq \al_2q^s - \al_3 q^s.\]
 Upon taking (for example) $\al_3=\al_2/2$, we learn that 
\[|\{ a_1 \modd{q} : |G(a_1)| \geq (\al_2/2)q^{s-1}\}| \geq (\al_2/2)q.\]
 
 \end{proof}

Finally, we are actually interested in products of sums of the form $T(\underline{a};q)$.  Here we replace the notation  $\underline{a}$ by $(a_1,\underline{b})$ where $a_1 \in \Z$ and $\underline{b} \in \Z^{s-1}.$ For any $(a_1,\underline{b}) \in \Z \times \Z^{s-1}$ define 
 $T((a_1,\underline{b});q)$ as in Proposition \ref{prop_sum_big}.  We are interested in products of $m$ copies of these sums, where $a_1$ varies $\modd{q}$ and each of $\underline{b}_1, \ldots, \underline{b}_m$ varies $\modd{q}^{s-1}.$
 
\begin{cor}\label{cor_T_prod_big_0}
Fix an integer $s \geq 2$ and  integers $1 = k_s  < \cdots < k_2 < k_1$.  Let $\al_1,\al_2$ and $T((a_1,\underline{b});q)$   be as in  Proposition \ref{prop_sum_big}.
Let $\Gcal(q) \subset \F_q \times \F_q^{s-1} \times \cdots \times \F_q^{s-1}$ denote the set of $a_1 \modd{q}$ and $\underline{b}_1, \ldots, \underline{b}_m \modd{q}^{s-1}$ for which 
\beq\label{T_big_ineq_0}
|T((a_1,\underline{b}_1);q)| \cdots |T((a_1,\underline{b}_m);q)| \geq \al_1^mq^{m/2}.\eeq
Then for every prime $q \ge 3$   such that $q \ndiv k_i$ for all $i \in \{1,\dots,s\}$,
we have
\[ |\Gcal(q)| \geq (\al_2/2)^{m+1} q^{1 + m(s-1)}.\]
\end{cor}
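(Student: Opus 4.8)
The plan is to deduce Corollary \ref{cor_T_prod_big_0} from Corollary \ref{cor_T_big} by a product argument. The key observation is that the inequality (\ref{T_big_ineq_0}) is certainly satisfied whenever $a_1$ is chosen so that $|G(a_1)| \geq (\al_2/2)q^{s-1}$ (where $G(a_1)$ is the good set from Corollary \ref{cor_T_big} with $\underline{a} = (a_1,a')$), and then each $\underline{b}_j$ is chosen in $G(a_1)$: for such choices, each factor $|T((a_1,\underline{b}_j);q)| \geq \al_1 q^{1/2}$, and so the product is $\geq \al_1^m q^{m/2}$.

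\medskip

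First I would invoke Corollary \ref{cor_T_big}: for every prime $q \geq 3$ with $q \ndiv k_i$ for all $i$, the set
\[
\Bcal(q) := \{ a_1 \modd{q} : |G(a_1)| \geq (\al_2/2) q^{s-1}\}
\]
satisfies $|\Bcal(q)| \geq (\al_2/2) q$. Next, for each fixed $a_1 \in \Bcal(q)$, the number of tuples $(\underline{b}_1,\ldots,\underline{b}_m) \in (\F_q^{s-1})^m$ with every $\underline{b}_j \in G(a_1)$ is exactly $|G(a_1)|^m \geq (\al_2/2)^m q^{m(s-1)}$; and for each such tuple, (\ref{T_big_ineq_0}) holds, so $(a_1,\underline{b}_1,\ldots,\underline{b}_m) \in \Gcal(q)$. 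Summing over $a_1 \in \Bcal(q)$, and using that distinct values of $a_1$ give disjoint families of tuples $(a_1,\underline{b}_1,\ldots,\underline{b}_m)$, we obtain
\[
|\Gcal(q)| \geq \sum_{a_1 \in \Bcal(q)} |G(a_1)|^m \geq |\Bcal(q)| \cdot (\al_2/2)^m q^{m(s-1)} \geq (\al_2/2) q \cdot (\al_2/2)^m q^{m(s-1)} = (\al_2/2)^{m+1} q^{1 + m(s-1)},
\]
which is the claimed bound.

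\medskip

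There is essentially no obstacle here: the only point requiring a moment's care is that $\Gcal(q)$ as defined contains \emph{all} tuples satisfying (\ref{T_big_ineq_0}), so the lower bound we construct (the "diagonal" tuples where all $\underline{b}_j$ lie in the common good set $G(a_1)$) is a legitimate subset of $\Gcal(q)$; we are not claiming equality, only a lower bound, which is exactly what the statement asks for. One should also note the quantities $\al_1,\al_2$ are the explicit constants $\al_1 = 1/2$, $\al_2 = k_1^{-2}/4$ furnished by Proposition \ref{prop_sum_big}, so the final bound is fully explicit in $k_1$ (and $m$, $s$), though the statement only records it in terms of $\al_2$.
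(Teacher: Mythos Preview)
Your proof is correct and follows essentially the same approach as the paper: the paper likewise defines the set of good $a_1$ (calling it $A_1(q)$ rather than your $\Bcal(q)$), invokes Corollary \ref{cor_T_big} to get $|A_1(q)| \geq (\al_2/2)q$, and then lets each $\underline{b}_j$ vary independently over $G(a_1)$ to obtain the product bound.
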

\begin{proof}
For each $a_1 \modd{q},$ define the ``good set'' 
$G(a_1)$ as in Corollary \ref{cor_T_big}.  
Now define
\[A_1(q) := \{ a_1 \modd{q}: |G(a_1)| \geq (\al_2/2) q^{s-1}\}.\]
By Corollary \ref{cor_T_big}, $|A_1(q)| \geq (\al_2/2)q$.  Now for each $a_1 \in A_1(q)$ we let $\underline{b}_1, \ldots, \underline{b}_m$ vary independently over $G(a_1)$. This gives us a collection of 
\[\geq (\al_2/2)q \cdot (\al_2/2)q^{s-1} \cdots (\al_2/2)q^{s-1}\]
tuples $(a_1,\underline{b}_1, \ldots, \underline{b}_m) \in \F_q \times \F_q^{s-1} \times \cdots \times \F_q^{s-1} $ for which $|T((a_1,\underline{b}_i);q)| \geq \al_1q^{1/2}$ for each $1 \leq i \leq m$, so that (\ref{T_big_ineq_0}) holds. 
\end{proof}

 When we construct the set $\Omega^*$ in Section \ref{sec_Omega}, we will apply Corollary \ref{cor_T_prod_big_0} in the case of $s=2$, $m=n-1,$ with $k_1=k \geq 2$ and $k_2=1.$
 In this case we will also need a good \emph{upper} bound for the product of the $T(a_1,b;q)$, so we modify the sets $\Gcal(q)$ by removing one point.
 \begin{cor}\label{cor_T_prod_big}
Fix an integer $k \geq 2$ and set $k_1=k, k_2=1$ in Corollary \ref{cor_T_prod_big_0}, so that
$\al_1=1/2,\al_2=k^{-2}/4$. 
Let $\Gcal^*(q) \subset \F_q^* \times \F_q^{n-1}$ denote the set of $a_1, a_2,\ldots, a_n \modd{q}$   for which
\beq\label{T_big_ineq}
\al_1^{n-1}q^{(n-1)/2} \leq |T((a_1,a_2);q)| \cdots |T((a_1,a_n);q)| \leq  (k-1)^{n-1} q^{(n-1)/2}.\eeq
Then for every prime $q$   such that $q  \geq 16k^2$,
\[ |\Gcal^*(q)| \geq (\al_2/2)^{n}(1-2^{-n}) q^{n}.\]
\end{cor}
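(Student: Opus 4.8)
The plan is to deduce Corollary~\ref{cor_T_prod_big} from Corollary~\ref{cor_T_prod_big_0} by specializing parameters and then excising a single bad tuple. First I would apply Corollary~\ref{cor_T_prod_big_0} with $s=2$, $m=n-1$, $k_1=k$, $k_2=1$, for which Proposition~\ref{prop_sum_big} gives $\al_1=1/2$ and $\al_2=k^{-2}/4$. The hypothesis of Corollary~\ref{cor_T_prod_big_0} requires $q\ge 3$ prime with $q\ndiv k_i$ for $i=1,2$; since $k_2=1$ this is automatic, and $q\nmid k$ follows from $q\geq 16k^2>k$ (a prime exceeding $k$ cannot divide it). Thus for such $q$ we obtain a set $\Gcal(q)\subseteq \F_q\times\F_q\times\cdots\times\F_q$ (with $s-1=1$, so each $\underline b_i$ is a single coordinate $a_{i+1}\in\F_q$) of tuples $(a_1,a_2,\ldots,a_n)$ satisfying the lower bound $|T((a_1,a_2);q)|\cdots|T((a_1,a_n);q)|\geq \al_1^{n-1}q^{(n-1)/2}$, with $|\Gcal(q)|\geq (\al_2/2)^{n}q^{n}$.

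Next I would observe that the upper bound in \eqref{T_big_ineq} is automatic once we restrict to $a_1\in\F_q^*$. Indeed, if $a_1\not\con 0\modd q$, then each $P_i(n):=a_1 n^k + a_i n$ is a degree-$k$ polynomial with leading coefficient $a_1$ not divisible by $q$, so the Weil bound \eqref{Weil} gives $|T((a_1,a_i);q)|\leq (k-1)q^{1/2}$ for each $i$, hence the product is $\leq (k-1)^{n-1}q^{(n-1)/2}$. So $\Gcal^*(q)$ is exactly the subset of $\Gcal(q)$ with $a_1\not\con 0$; equivalently, $\Gcal^*(q) = \Gcal(q)\setminus\{(a_1,\ldots,a_n)\in\Gcal(q): a_1\con 0\}$. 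It therefore remains to bound the number of tuples in $\Gcal(q)$ with $a_1\con 0$: these form a subset of $\{0\}\times\F_q^{n-1}$, which has exactly $q^{n-1}$ elements, so at most $q^{n-1}$ tuples are removed.

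Combining, $|\Gcal^*(q)| \geq |\Gcal(q)| - q^{n-1} \geq (\al_2/2)^n q^n - q^{n-1}$. To put this in the stated form I would factor out $(\al_2/2)^n q^n$ and check that the leftover term is at most a $2^{-n}$ fraction, i.e. that $q^{n-1}\leq 2^{-n}(\al_2/2)^n q^n$, equivalently $q \geq 2^n (2/\al_2)^n = (4/\al_2)^n$. Plugging in $\al_2 = k^{-2}/4$ gives $4/\al_2 = 16k^2$, so the condition becomes $q\geq (16k^2)^n$. This is slightly stronger than the hypothesis $q\geq 16k^2$ stated in the corollary when $n\geq 2$, so I would either (i) read the hypothesis as $q\geq (16k^2)^n$ — most likely the intended reading, since that is what the bound $(1-2^{-n})$ forces — or (ii) if only $q\geq 16k^2$ is assumed, note that we still get $q^{n-1}\leq (16k^2)^{-1} q^n$ and absorb this with a milder constant; but the clean statement with $(1-2^{-n})$ needs $q\geq (16k^2)^n$. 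I expect the main (minor) obstacle to be exactly this bookkeeping of which power of $q$ the hypothesis must control, together with making sure the Weil bound applies to $a_1 n^k + a_i n$ (which needs $q\nmid a_1$ and $q\nmid k$, both guaranteed here). With the right threshold, $|\Gcal^*(q)| \geq (\al_2/2)^n q^n - 2^{-n}(\al_2/2)^n q^n = (\al_2/2)^n (1-2^{-n}) q^n$, which is the claim.
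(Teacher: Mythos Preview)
Your overall strategy is right, but there is a real gap in the counting step that prevents you from reaching the stated conclusion under the stated hypothesis $q\ge 16k^2$. You remove \emph{all} tuples with $a_1\equiv 0$, bounding their number by $q^{n-1}$; this is why you end up needing $q\ge (16k^2)^n$. The paper instead observes that almost none of those tuples were in $\Gcal(q)$ to begin with: if $a_1\equiv 0\modd q$ then $T((a_1,b);q)=\sum_{n\modd q} e(2\pi bn/q)=0$ whenever $b\not\equiv 0$, so any tuple in $\Gcal(q)$ with $a_1\equiv 0$ must have $a_2\equiv\cdots\equiv a_n\equiv 0$ as well. Hence exactly one tuple (the zero tuple) need be removed, and one sets $\Gcal^*(q)=\Gcal(q)\setminus\{0\}$.

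With this observation the arithmetic works out under $q\ge 16k^2$: one has
\[
|\Gcal^*(q)|\ \ge\ |\Gcal(q)|-1\ \ge\ \bigl((\al_2/2)^n - q^{-n}\bigr)q^n,
\]
and $q^{-n}\le 2^{-n}(\al_2/2)^n$ is equivalent to $q\ge 4/\al_2=16k^2$, giving the claimed $(\al_2/2)^n(1-2^{-n})q^n$. So the hypothesis in the corollary is correct as written; the missing ingredient in your argument is the vanishing of $T((0,b);q)$ for $b\not\equiv 0$, which collapses the $q^{n-1}$ bad tuples down to a single one.
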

We will use these sets $\Gcal^*(q)$ to pick the rationals $a_1/q, a_2/q,\ldots, a_n/q$ we use to build the set $\Omega^*$ in Section \ref{sec_Omega}.
\begin{proof} 
If $q>k$ and $q \ndiv a_1$ then we can apply the Weil bound to each factor, and prove the upper bound in (\ref{T_big_ineq}). If $q | a_1$ then $T(a_1,b;q)=0$ unless $q|b$ also. In particular, the only choice of $(a_1,a_2,\ldots, a_n)$ with $q |a_1$ that could contribute to the large values in (\ref{T_big_ineq}) must have $(a_1, a_2,\ldots, a_n) \con (0,0,\ldots, 0) \modd{q}$, in which case the product in (\ref{T_big_ineq}) is actually of size $q^{n-1}$. 
Taking $\Gcal(q)$ as in Corollary \ref{cor_T_prod_big_0} we now set
\beq\label{Gstar}
\Gcal^*(q) = \Gcal(q) \setminus 0.
\eeq
Then for every prime $q>k$, (\ref{T_big_ineq}) holds for every $(a_1,a_2,\dots, a_n) \in \Gcal^*(q)$.   Moreover, as long as we assume that $q \geq (\al_2/4)^{-1} = 16k^2,$ then 
\beq\label{Gstar_size}
|\Gcal^*(q)| \geq |\Gcal(q)| -1 
\geq ((\al_2/2)^{n}-q^{-n})q^{n} \geq  (\al_2/2)^{n}(1-2^{-n})q^{n}.
\eeq
Finally, we record that for all $\underline{a} \in \Gcal^*(q),$ $q \ndiv a_1.$
\end{proof}


\subsection{Upper bounds for incomplete exponential sums}
We require upper bounds for incomplete sums modulo $q$. We prove that an incomplete sum is at most as large (up to a logarithmic factor) as the complete sum, by the standard method of completing the sum and applying   the Weil bound.

\begin{lemma} \label{upper_bound_incomplete}
 Let $P \in \Z[X]$ be a polynomial of degree $k \geq 2$, with leading coefficient $c_k$. Let $q$ be a prime with $q \ndiv k$, $q \ndiv c_k$.
 Then for every $1 \leq H \leq q,$
 \[ \Bigg|\sum_{1 \leq n \leq H} e(2\pi P(n)/q) \Bigg|\ll_k q^{1/2} (\log q).\]
 \end{lemma}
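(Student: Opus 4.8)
The plan is to use the classical technique of \emph{completing the sum}: rewrite the incomplete sum as an average over all additive characters modulo $q$ of complete exponential sums, each of which is controlled by the Weil bound \eqref{Weil}.

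First, since $n \mapsto e(2\pi P(n)/q)$ depends only on $n \bmod q$, I would view the incomplete sum as $\sum_{n \modd{q}} e(2\pi P(n)/q)\,\mathbf{1}_H(n)$, where $\mathbf{1}_H(n)$ is the indicator that the representative of $n$ in $\{1,\dots,q\}$ lies in $\{1,\dots,H\}$. Expanding $\mathbf{1}_H$ in additive characters on $\Z/q\Z$, with Fourier coefficients $\widehat{\mathbf{1}_H}(h) = \sum_{m=1}^{H} e(-2\pi hm/q)$, yields the identity
\[
\sum_{1 \leq n \leq H} e(2\pi P(n)/q) = \frac{1}{q}\sum_{h \modd{q}} \widehat{\mathbf{1}_H}(h) \sum_{n \modd{q}} e\!\left(2\pi (P(n)+hn)/q\right).
\]
Next I would estimate the two factors of each term separately. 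For every $h$, the polynomial $P(X)+hX$ still has degree $k \geq 2$ and the same leading coefficient $c_k$, so the hypotheses $q \ndiv k$ and $q \ndiv c_k$ allow me to apply the Weil bound \eqref{Weil} and conclude that the complete sum over $n$ has modulus at most $(k-1)q^{1/2}$, uniformly in $h$ (including $h=0$). For the geometric factor, $|\widehat{\mathbf{1}_H}(0)| = H \leq q$, while for $h \not\con 0 \modd{q}$, summing the geometric series and using $|\sin \pi x| \geq 2\|x\|$ (where $\|x\|$ denotes the distance from $x$ to the nearest integer) gives $|\widehat{\mathbf{1}_H}(h)| \leq 1/|\sin(\pi h/q)| \leq q/(2\min(h,q-h))$.

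Finally, I would assemble the estimate: the $h\con 0$ term contributes $\frac{1}{q}\cdot q\cdot (k-1)q^{1/2} = (k-1)q^{1/2}$, and the remaining terms contribute at most
\[
\frac{(k-1)q^{1/2}}{q}\sum_{h=1}^{q-1}\frac{q}{2\min(h,q-h)} \;\ll\; (k-1)\,q^{1/2}\sum_{h=1}^{\lfloor q/2\rfloor}\frac{1}{h} \;\ll_k\; q^{1/2}\log q,
\]
which is the claimed bound. I do not anticipate a genuine obstacle here; the one point requiring (minor) care is verifying that the Weil bound still applies after twisting $P$ by the linear character $hn$, and this is immediate precisely because $k \geq 2$, so the twist changes neither the degree nor the leading coefficient, leaving the hypotheses $q\ndiv k$, $q\ndiv c_k$ exactly in the form needed for \eqref{Weil}. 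The logarithmic loss is intrinsic to the completion step and matches what the statement permits.
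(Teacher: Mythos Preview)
Your proposal is correct and follows essentially the same approach as the paper: complete the sum via additive characters modulo $q$, apply the Weil bound \eqref{Weil} uniformly to the twisted complete sums $\sum_{n}e(2\pi(P(n)\pm hn)/q)$, and bound the remaining geometric series by $\min\{H,\|h/q\|^{-1}\}$ summed over $h$ to produce the $\log q$ factor. The only cosmetic differences are the sign on $hn$ and that you separate the $h\equiv 0$ contribution explicitly, whereas the paper absorbs it into the $\min\{H,\cdot\}$ bound.
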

 
\begin{proof}
Let $S(H)$ denote the sum on the left-hand side.
For any integer $q$, we can complete the sum by writing 
\begin{eqnarray*}
    S(H) &=& \sum_{1 \leq a \leq q}e(2\pi P(a)/q)  \sum_{\bstack{1 \leq n \leq H}{n \con a \modd{q}}} 1\\
        & = & \sum_{1 \leq a \leq q} e(2\pi P(a)/q) \sum_{1 \leq n \leq H} \frac{1}{q} \sum_{1 \leq h \leq q} e(2\pi (h(n-a)/q))) \\
        & = & \frac{1}{q} \sum_{1 \leq h \leq q}  \sum_{1 \leq a \leq q} e(2\pi (P(a) -ha)/q)
            \sum_{1 \leq n \leq H} e(2\pi hn/q).
\end{eqnarray*} 
Now in the case that $q$ is a prime (our case of interest), under the assumption that $q \ndiv c_k$, we can apply the Weil bound (\ref{Weil}) to the sum over $a$, to achieve the bound $\leq (k-1)q^{1/2}$ for the absolute value of this sum, uniformly in $h$. We also recall that the geometric series summed over $1 \leq n\leq H$ is bounded by $\ll \min \{ H, \|h/q\|^{-1}\}$, in which $\|t\|$ denotes the distance from $t$ to the nearest integer. By separating   into the cases $h \leq q/2$ and $q/2< h \leq q$ we see that the sum of $\min\{ H, \|h/q\|^{-1}\}$ over $1 \leq h \leq q$ is bounded by $\ll q \log q.$ We can conclude that 
\beq\label{completed}
|S(H)| \ll (k-1) q^{1/2}(\log q).
\eeq
\end{proof}

 \subsection{A sum in which the top coefficient is rational}
 
 We will also encounter exponential sums 
  \[ \sum_{M<n \leq M+N} e(2\pi P(n)) \]
  in which the degree $k$ polynomial $P$ has real coefficients.   Here it would be standard to apply the Weyl bound, which for $k \geq 3$ is substantially weaker than the square-root cancellation bounds we have seen above.
  (Here is another difference from Bourgain's quadratic case; for $k=2$ the Weyl bound only differs from square-root cancellation by a logarithm.)
 Following the resolution of the Main Conjecture for the Vinogradov Mean Value Theorem \cite{BDG16} (see also \cite{Woo16c,Woo19}), an improvement on the classical Weyl bound is now available. For $k \geq 3$, if for some $2 \leq j \leq k$ the coefficient $\al_j$ satisfies $|\al_j  - a/q| \leq 1/q^2$ for some $(a,q)=1$, then
\[ \Bigg| \sum_{M< n \leq M+N} e(2\pi P(n)) \Bigg| \ll N^{1+\ep} (q^{-1} + N^{-1} + qN^{-j})^{\frac{1}{k(k-1)}}.\]
 This is recorded by Bourgain \cite[Thm. 5]{Bou17b} but was already known to be an outcome of proving the Main Conjecture; see \cite[Ch. 4 \S 1]{Mon94}.
 But a direct application of this bound in our setting would significantly weaken our method.

Instead we use the following critical observation that is specific to the counterexample we construct. The exponential sums  we encounter are determined by a particular choice of $x \in \Omega^* \subset B_n(0,1)$ and $t \in (0,1)$. The fact that for each $x$ in the set $\Omega^*$ we construct we can \emph{choose} $t$ allows us to ensure that the leading coefficient of our degree $k$ polynomial  is \emph{rational}. This means we only accrue an error term by approximating the linear term; this is very advantageous.
Here is the core estimate we require.

\begin{prop}\label{prop_sum_approx}
Suppose that $P(n) = 2\pi a_1n^k/q + yn$ for an integer $k \geq 2$, a prime $q \ge 3$ such that $q \ndiv k$, an integer $1 \leq a_1<q$, and a real value $y$. Suppose also that $|y - 2\pi b/q| \leq V,$ for some integer $1 \leq b \leq q$ and some real  $V \geq 0$. Then for every $N \geq 1,$
 \beq\label{ST_sum}
 \Bigg|\sum_{M<n \leq M+N} e(P(n))\Bigg|
  = \lfloor N/q \rfloor \cdot \Bigg|\sum_{1 \leq n \leq q} e(2\pi (a_1n^k + bn)/q)\Bigg|  +E\eeq
  in which 
 \[ |E| \ll_k NV(\lfloor N/q \rfloor q^{1/2}+ q^{1/2}\log q) + q^{1/2} \log q.\]
 \end{prop}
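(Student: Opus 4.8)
The idea is to absorb the linear term $yn$ into a genuinely $q$-periodic sum by summation by parts, rather than by a termwise comparison (which would cost an unacceptable factor of $|M|$). Set $\theta := y - 2\pi b/q$, so that $|\theta| \le V$, and write $f(n) := e(2\pi(a_1 n^k + bn)/q)$ and $g(n) := e(\theta n)$, so that $e(P(n)) = f(n) g(n)$ and $f$ is periodic modulo $q$. Enumerate the integers in $(M, M+N]$ as $m+1, \dots, m+L$, where $L := \lfloor M+N\rfloor - \lfloor M\rfloor$; since $N-1 < L \le N+1$ we have $|L-N|\le 1$ and $L \le 2N$. Because $q$ is prime, $q \nmid k$, and $1 \le a_1 < q$ forces $q \nmid a_1$, both the Weil bound (\ref{Weil}) and Lemma \ref{upper_bound_incomplete} will apply to the polynomial $a_1 X^k + bX$ and to its translates.

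First I would record a formula for the partial sums of $f$. For $0 \le J \le L$, splitting $\{m+1,\dots,m+J\}$ into $\lfloor J/q\rfloor$ blocks of $q$ consecutive integers (each a complete residue system modulo $q$, hence contributing the complete sum $T := \sum_{1\le n\le q} e(2\pi(a_1 n^k + bn)/q)$) together with a final block of fewer than $q$ consecutive integers, gives
\[
F_J := \sum_{j=1}^{J} f(m+j) = \lfloor J/q\rfloor\, T + R_J .
\]
After a shift of the summation variable the final block is an incomplete exponential sum of the form handled by Lemma \ref{upper_bound_incomplete} (the shifted polynomial still has leading coefficient $a_1$), so $|R_J| \ll_k q^{1/2}\log q$; and (\ref{Weil}) gives $|T| \le (k-1) q^{1/2}$. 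Hence, using $\lfloor J/q\rfloor \le \lfloor N/q\rfloor + 1$, one obtains the uniform bound $|F_J| \ll_k \lfloor N/q\rfloor q^{1/2} + q^{1/2}\log q$ for all $0 \le J \le L$.

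Next I would apply summation by parts:
\[
\sum_{M < n \le M+N} e(P(n)) = \sum_{j=1}^{L} f(m+j)\, g(m+j) = F_L\, g(m+L) - \sum_{j=1}^{L-1} F_j\big(g(m+j+1) - g(m+j)\big).
\]
Here $|g(m+j+1)-g(m+j)| = |e(\theta) - 1| \le |\theta| \le V$, so the $j$-sum has modulus $\ll_k N V\big(\lfloor N/q\rfloor q^{1/2} + q^{1/2}\log q\big)$ by the bound on $|F_j|$ and $L \le 2N$. For the boundary term, writing $F_L = \lfloor L/q\rfloor\, T + R_L$ and using $|\lfloor L/q\rfloor - \lfloor N/q\rfloor| \le 1$ together with $|T| \ll_k q^{1/2}$, we get $F_L\, g(m+L) = \lfloor N/q\rfloor\, T\, g(m+L) + E''$ with $|E''| \ll_k q^{1/2}\log q$. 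Combining the two contributions, $\sum_{M<n\le M+N} e(P(n)) = \lfloor N/q\rfloor\, T\, g(m+L) + E'$ with $|E'| \ll_k q^{1/2}\log q + NV(\lfloor N/q\rfloor q^{1/2} + q^{1/2}\log q)$. Since $|g(m+L)| = 1$ we have $|\lfloor N/q\rfloor\, T\, g(m+L)| = \lfloor N/q\rfloor |T| = \lfloor N/q\rfloor \big|\sum_{1\le n\le q} e(2\pi(a_1 n^k + bn)/q)\big|$, so the triangle inequality gives the stated identity (\ref{ST_sum}) with $|E| \le |E'|$, which is the claimed bound.

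The \textbf{main obstacle} --- and the reason a plain termwise substitution of $e(yn)$ by $e(2\pi b n/q)$ is not good enough --- is that such a substitution costs $\sum_{M<n\le M+N} |y - 2\pi b/q|\,|n| \ll V(|M|+N)N$, which is worthless when $|M|$ is large; shifting the $V$-sized perturbation onto the \emph{consecutive differences} $g(n+1)-g(n)$ via summation by parts makes it multiply the already-cancelling partial sums $F_j$ instead. A secondary subtlety is that the main term must emerge as $\lfloor N/q\rfloor|T|$ at full strength, with no spurious $\theta$-dependent factor, which forces the summation by parts to be carried out over the entire range at once rather than block by block; the cancellation in the $F_j$ that makes this work is exactly where the Weil bound and Lemma \ref{upper_bound_incomplete} are used.
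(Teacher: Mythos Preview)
Your proof is correct and follows essentially the same approach as the paper: peel off the real linear perturbation $e(\theta n)$ by partial summation, bound the resulting partial sums uniformly using the Weil bound on complete blocks and Lemma~\ref{upper_bound_incomplete} on the remaining incomplete block, and then extract $\lfloor N/q\rfloor\,T$ from the boundary term. The paper invokes the pre-recorded identity (\ref{partialsumexp}) where you write out discrete Abel summation explicitly, and you are slightly more careful about the discrepancy between $L$ and $N$ when $M,N$ need not be integers, but the substance is the same.
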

In applications, we will choose $a_1$ and $b$ to be such that the main term is  $\geq \al_0 \lfloor N/q \rfloor q^{1/2}$, for some constant $\al_0$. Thus in order for the error term to be, say, at most half the size  of the main term, we will need to have $V \ll_k N^{-1},$ with an implicit constant that is chosen to be appropriately small relative to $k, \al_0$.

\begin{proof} 
By partial summation (see e.g. the standard statement in (\ref{partialsumexp}) below),
 \beq\label{sum_error}
 \sum_{M<n \leq M+N} e(  P(n))
  = e( (y-2\pi b/q)(M+N))\sum_{M<n \leq M+N} e(2\pi (a_1n^k + bn)/q) +E'\eeq
  in which 
 \beq\label{E'}
 |E'| \leq \sup_{u \in [0,N]} \Bigg| \sum_{M < n \leq M+u} e(2\pi (a_1n^k + bn)/q)\Bigg| \cdot VN.
 \eeq
 Now we recognize that for any $u\geq 0$,
 \[ \sum_{M < n \leq M+u} e(2\pi (a_1n^k + bn)/q) \ll_k \lfloor u/q \rfloor q^{1/2}+ q^{1/2}\log q.
 \]
 The first term comes from applying the Weil bound (\ref{Weil}) to as many complete sums as possible, and the second term comes from applying (\ref{completed}) to the one possible remaining incomplete sum.
 In particular, since this upper bound is increasing with $u$ we can apply it in (\ref{E'}) to see that 
 \[|E'| \ll_k NV(\lfloor N/q \rfloor q^{1/2}+ q^{1/2}\log q).\]

The final step is to note that we can also break the main term in (\ref{sum_error}) into $\lfloor N/q \rfloor$ complete sums of length $q$, and at most one incomplete sum that is bounded above by $\ll_k q^{1/2} \log q,$ which contributes an acceptable term to the error. This completes the proof.
\end{proof}

\subsection{Partial summation and integration}
 
 We record here several standard facts that we use to remove slowly-varying weights from sums and integrals.
 First, let $a<b$ be real numbers. Let $\mu$ be an integrable function and let $h$ be a real-valued $C^1$ function. Then integration by parts shows that 
 \beq\label{intbyparts} \int_{a}^b \mu(t) e(h(t))dt = e(h(b))\int_a^b \mu(t)dt +E\eeq
 where $|E| \leq \| \mu \|_{L^1[a,b]}\|h'\|_{L^\infty[a,b]} \cdot (b-a).$
 
 Second, let $\{a_n\}$ be a sequence of complex numbers and let $H$ be a $C^1$ function. Define the partial sum $A(u) = \sum_{M < n \leq u } a_n$. Then as a result of partial summation,
 \beq\label{partialsumint}
 \sum_{M< n \leq M+N}  a_n H(n)  = A(M+N) H(M+N) - \int_{M}^{M+N} A(u) H'(u)du.
 \eeq
 
 Third, let $f$ be a real-valued function and $h$ be a $C^1$ real-valued function. Then as a consequence of (\ref{partialsumint}),
 \beq\label{partialsumexp}
 \sum_{M< n \leq M+N} e(f(n) + h(n)) = e(h(M+N))\sum_{M< n \leq M+N} e(f(n) ) + E\eeq
 where 
 \[|E| \leq \sup_{u \in [0,N]}\Bigg|\sum_{M < n\leq M+u} e(f(n) )\Bigg| \cdot \|h'\|_{L^\infty[M,M+N]} \cdot N.\]

\section{Reduction of the maximal function to   a complete exponential sum}\label{sec_reduction}

\subsection{The initial definition}
Our construction of the counterexample functions $f=f_R$ is motivated by Bourgain's construction in \cite{Bou16} in the special case $P_2(\xi) = |\xi|^2$, as presented and motivated by the third author in \cite{Pie20}. We work initially with unspecified parameters that we will choose optimally at the end of the argument; this reveals both the flexibility and the natural constraints of our method.

We begin by exploiting the simple fact that   modulating a smooth function by an exponential shifts the support of its Fourier transform. Indeed, if 
 $S = (S_1,\dots,S_n) \in \R_{>0}^n$  and we define $S \circ x = (S_1x_1,\dots,S_nx_n)$ and   $S^{-1} = (S_1^{-1},\dots,S_n^{-1})$,
  then
  \[[\Phi(S \circ x) e(M  \cdot x)]\hat{\;}(\xi) = S_1^{-1} \cdots S_n^{-1}\hat{\Phi}(S^{-1} \circ (\xi-M)).\]
Hence if $\hat{\Phi}$ is supported in $B_n(0,1)$ then $[\Phi(S \circ x) e(M  \cdot x)]\hat{\;}(\xi)$ is supported in $B_n(M,\max S_j),$ which in turn lies in an annulus of ``radius'' $M$, if $M$ is appropriately larger than $\max_{1 \leq j \leq n } S_j$.

Once and for all, we fix a Schwartz function $\phi$ on $\R$ that satisfies $\phi \geq 0$,  $\phi (0) = (2\pi)^{-1} \int\hat{\phi}(\xi) d\xi=1,$ and $\supp(\hat{\phi}) \subseteq [-1,1]$. Such a function can be constructed by starting with $\psi \in C_0^\infty (B_1(0,1/4))$ with $(2\pi)^{-1} \int \psi(\xi) d\xi =1$. Then we define $\phi$ by $\hat{\phi} = (2\pi)^{-1} \psi * \overline{\psi(-\cdot)}$, so that $\phi = |\check{\psi}|^2$, in which $\check{\psi}(x) = (2\pi)^{-1} \int \psi(\xi) e^{i x \xi} d\xi.$ 
 Since $\phi$ is fixed once and for all, we will allow implicit constants below to depend on $\phi$ (for example on various bounded norms of $\phi, \phi', \hat{\phi}$), and will often denote this dependence by $\ll_\phi$ without further specification.

Let $R \geq 1$ be given; this is the main parameter we will let go to infinity in Theorem \ref{thm_main}. 
Let \[L=R^{\lambda}, \qquad S_1=R^\sigma\]
for some parameters $0<\lam, \sig < 1$  that we will choose later in terms of $R$.  

Fix $n\geq 2$. Denote $x=(x_1,\dots,x_n)=(x_1,x')$ and define $\Phi_{n-1}(x')=\prod_{j=2}^n \phi(x_j)$. 
Define $f=f_R$ by
\begin{equation}\label{eqn: f}
f(x)=\phi(S_1x_1)e(Rx_1)\Phi_{n-1}(x')\sum_{\substack{m' \in \Z^{n-1} \\ R/L \le m_j < 2R/L}} e(Lm' \cdot x').
\end{equation}
Under the constraints on $L,S_1$ specified above, this function has the property that its Fourier transform is supported in 
\[[R-S_1, R+S_1] \times [R-1, 2R+1]^{n-1} \subseteq B_n(0, \sqrt{n} \cdot 2R + \sqrt{n} S_1) \setminus B_n(0,\sqrt{n} R - \sqrt{n} S_1).\] 
Thus since $S_1=R^\sig$ with $\sig<1$, there exists $R_1 = R_1(n,\sig)$ such that for all $R \geq R_1,$ $\hat{f}$ is supported in the annulus $A_n(R,4 \sqrt{n}).$

\subsection{Computation of $T_t^{(P_k)}f$}
In this section it is convenient to use the notation 
\[ P_k(\xi) = \xi_1^k + \cdots + \xi_n^k = \xi_1^k + \tilde{P}_k(\xi'), \qquad \tilde{P}_k(\xi') := \xi_2^k + \cdots + \xi_n^k.\]
By definition,  
\begin{multline*} 
  T_t^{(P_k)}f(x) =  \frac{1}{2\pi} \int_\R \hat{\phi}(\lambda)e((R+\lambda S_1)x_1+(R+\lambda S_1)^k t) d\lambda  \\
   \times \frac{1}{(2\pi)^{n-1}} \int_{\R^{n-1}} \hat{\Phi}_{n-1}(\xi') \sum_{\substack{m' \in \Z^{n-1} \\ R/L \le m_j < 2R/L}} e((\xi'+Lm') \cdot x'+\tilde{P}_k(\xi'+Lm')t) d\xi'.  
\end{multline*}
 Our goal is to isolate out from this the exponential sum 
    \begin{align}\label{expsumfull}
    \Sbf(2R/L;x',t):=\sum_{\substack{m' \in \Z^{n-1} \\R/L \leq m_j < 2R/L}} e(Lm'\cdot x' + L^k \tilde{P}_k(m') t).
    \end{align}
To do so, we will approximate $T_t^{(P_k)}f(x)$ by an integral that has only linear phase-dependence on $\xi'$ and $\lam$, so that we can apply Fourier inversion and the fact that $\phi$ is nonzero close to the origin. 
The   reduction to $\Sbf(2R/L;x',t)$  generalizes the approach of \cite{Pie20}; the error terms that we accrue in the process are naturally larger than in the quadratic case, since we must remove higher-degree terms from the phase.

Since $\phi(0)=1$ and $\phi$ is smooth, given any small $0<c_0<1/2$ of our choice, there exists a constant $\del_0 = \del_0(c_0,\phi)<1/2$ such that  
 \beq\label{phi_size}
 |\phi(y)| \geq 1-c_0/2, \quad \text{for all $|y| \leq \del_0$}.\eeq

This section proves the following lower bound:
\begin{prop}\label{prop_approx}
Let $0<c_0<1/2$ be a  small constant of our choice, and let $\del_0$ be as in (\ref{phi_size}). Assume $\sig \leq 1/2$. There exist  $0<c_1(k,\del_0),c_2(k,\phi,c_0,\del_0), c_3(k,\phi,c_0)< 1/2$ such that for all sufficiently small constants $c_1<c_1(k,\del_0)$, $c_2<c_2(k,\phi,c_0, \del_0),$ $c_3 < c_3(k,\phi,c_0)$  of our choice, the following holds. 

Let $R \geq R_2(c_1,c_2,n,\phi,\sig)$ be sufficiently large. Let $x \in [-c_1,c_1]^n$ with $x_1\in (-c_1,-c_1/2]$ and $t\in (0,1)$ satisfy the constraints (\ref{eqn: t condition 1 kpurediag}) and (\ref{eqn: t condition 2 kpurediag}) stated below. Then 
\[|T_t^{(P_k)}f(x)| \geq (1-c_0)^n |\Sbf(2R/L;x',t)| + \mathbf{E}_1,\]
in which $|\mathbf{E}_1|$ is bounded as in (\ref{E1_bound}), stated below.
\end{prop}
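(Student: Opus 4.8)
The starting point is the displayed identity $T_t^{(P_k)}f(x)=I_1(x_1,t)\cdot I_2(x',t)$, where $I_1$ denotes the $\lambda$-integral and $I_2$ the $\xi'$-integral twisted by $\sum_{m'}$. In each factor the plan is to binomially expand the polynomial phase, discard the genuinely nonlinear (degree $\geq 2$) part using (\ref{intbyparts}) at a cost controlled by the $t$-constraints (\ref{eqn: t condition 1 kpurediag})--(\ref{eqn: t condition 2 kpurediag}), and then apply Fourier inversion $\tfrac{1}{2\pi}\int\hat\phi(\lambda)e(\lambda y)\,d\lambda=\phi(y)$ so that the surviving linear phases become values of $\phi$; by (\ref{phi_size}) these have modulus $\geq 1-c_0/2$ once their arguments are $\leq\delta_0$ in absolute value, which is exactly what the $t$-constraints secure. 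The sum $\Sbf(2R/L;x',t)$ of (\ref{expsumfull}) emerges from $I_2$, and multiplying the two resulting lower bounds yields the claim.

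First I would treat $I_1$. Since $(R+\lambda S_1)^k=R^k+kR^{k-1}S_1\lambda+\sum_{j=2}^k\binom{k}{j}R^{k-j}S_1^j\lambda^j$, the degree-$0$ term contributes only the unimodular factor $e(R^kt)$, the degree-$1$ term combines with $\lambda S_1x_1$ into the phase $\lambda S_1(x_1+kR^{k-1}t)$, and on $\supp\hat\phi\subseteq[-1,1]$ the remaining terms are $\ll_k tR^{k-2}S_1^2$ (the $j=2$ term dominating, as $S_1\leq R$). Removing these by (\ref{intbyparts}) and applying Fourier inversion gives
\[ I_1=\theta\,\phi\big(S_1(x_1+kR^{k-1}t)\big)+E \]
for some unimodular $\theta$, with $|E|\ll_\phi tR^{k-2}S_1^2$. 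Constraint (\ref{eqn: t condition 1 kpurediag}) makes $|E|$ as small as we wish, while (\ref{eqn: t condition 2 kpurediag}) forces $|S_1(x_1+kR^{k-1}t)|\leq\delta_0$, so $|I_1|\geq 1-\tfrac{c_0}{2}-|E|$ by (\ref{phi_size}).

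Next I would treat $I_2$. For each $m'$ I expand $(\xi_j+Lm_j)^k=(Lm_j)^k+k(Lm_j)^{k-1}\xi_j+r_j(\xi_j)$, where $|r_j(\xi_j)|\ll_k (Lm_j)^{k-2}|\xi_j|^2\ll_k R^{k-2}$ and $\|r_j'\|_{L^\infty[-1,1]}\ll_k R^{k-2}$, using $m_j\asymp R/L$ and $|\xi_j|\leq 1$ on $\supp\hat{\Phi}_{n-1}$. Interchanging $\sum_{m'}$ with $\int d\xi'$ and pulling out the $\xi'$-independent factor $e(Lm'\cdot x'+L^k\tilde{P}_k(m')t)$, the inner integral for each $m'$ has phase $\sum_{j=2}^n\xi_j\big(x_j+ktL^{k-1}m_j^{k-1}\big)+t\sum_{j=2}^n r_j(\xi_j)$; removing the $r_j$ terms by (\ref{intbyparts}) at cost $\ll_{k,\phi}tR^{k-2}$ per $m'$, Fourier inversion in each $\xi_j$ yields $\prod_{j=2}^n\phi\big(x_j+ktL^{k-1}m_j^{k-1}\big)$. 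By (\ref{eqn: t condition 2 kpurediag}) one has $|ktL^{k-1}m_j^{k-1}|\ll_k tR^{k-1}$ uniformly in $m_j$, so each factor equals $\phi(x_j)$ up to $\ll_\phi tR^{k-1}$ and the product equals $\prod_{j=2}^n\phi(x_j)$ up to $\ll_{k,\phi}tR^{k-1}$; summing over the $\asymp(R/L)^{n-1}$ multi-indices $m'$ gives
\[ I_2=\Big(\prod_{j=2}^n\phi(x_j)\Big)\Sbf(2R/L;x',t)+E_2, \qquad |E_2|\ll_{k,\phi}(R/L)^{n-1}tR^{k-1}. \]
Since $x\in[-c_1,c_1]^n$ with $c_1\leq\delta_0$, (\ref{phi_size}) gives $\big|\prod_{j=2}^n\phi(x_j)\big|\geq(1-\tfrac{c_0}{2})^{n-1}$.

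Finally I would combine these. From $|T_t^{(P_k)}f(x)|=|I_1|\,|I_2|$ and the two lower bounds,
\[ |T_t^{(P_k)}f(x)|\geq\Big(1-\tfrac{c_0}{2}-|E|\Big)\Big((1-\tfrac{c_0}{2})^{n-1}|\Sbf(2R/L;x',t)|-|E_2|\Big), \]
and after choosing the small constants $c_1,c_2,c_3$ (against $\delta_0,c_0,n,k$ and the implied constants above) so that $|E|$ is absorbed into the gap between $(1-\tfrac{c_0}{2})^n$ and $(1-c_0)^n$ -- using $(1-\tfrac{c_0}{2})^n\geq(1-c_0)^n$ -- one obtains $|T_t^{(P_k)}f(x)|\geq(1-c_0)^n|\Sbf(2R/L;x',t)|+\mathbf{E}_1$ with $\mathbf{E}_1$ bounded as in (\ref{E1_bound}) (in particular $|\mathbf{E}_1|\ll_{k,\phi}(R/L)^{n-1}tR^{k-1}$ up to lower-order terms); $R_2$ is taken large enough that the annulus-support statement and all of these asymptotics hold. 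The main difficulty is the bookkeeping created by the interchange of $\sum_{m'}$ with $\int d\xi'$: each leftover oscillatory integral still carries an $m'$-dependent linear shift $ktL^{k-1}m_j^{k-1}\xi_j$ and an $m'$-dependent degree-$\geq 2$ remainder $t\,r_j(\xi_j)$, so $\Sbf$ does not factor out for free, and one must verify that (\ref{eqn: t condition 1 kpurediag})--(\ref{eqn: t condition 2 kpurediag}) simultaneously annihilate the degree-$\geq 2$ phase pieces in \emph{both} integrals and shrink the surviving linear shifts enough that $\prod_{j=2}^n\phi(\cdot)$ collapses to $\prod_{j=2}^n\phi(x_j)$ -- all while keeping the error, which accumulates over $\asymp(R/L)^{n-1}$ frequency boxes, beneath the threshold that the exponential-sum lower bounds of Section \ref{sec_exp_sums} must later beat. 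Pinning down the exponents in those constraints (and recognizing that removing degree-$k$ terms forces the tighter $t$-window noted in Remark \ref{remark_dispersive}) is the crux.
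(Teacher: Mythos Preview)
Your treatment of the $\lambda$-integral $I_1$ is essentially the paper's, but you have swapped the roles of the two constraints: it is (\ref{eqn: t condition 1 kpurediag}) that forces $|S_1(x_1+kR^{k-1}t)|\leq\delta_0$ (so that $|\phi(\cdot)|\geq 1-c_0/2$), and it is (\ref{eqn: t condition 2 kpurediag}) that makes the degree-$\geq 2$ error $|E|\ll tR^{k-2}S_1^2$ small. This is a bookkeeping slip, not a structural problem.

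The real gap is in your handling of $I_2$. After Fourier inversion you arrive at the weighted sum
\[
\sum_{m'} e\big(Lm'\cdot x'+L^k\tilde P_k(m')t\big)\,\prod_{j=2}^n \phi\big(x_j+ktL^{k-1}m_j^{k-1}\big),
\]
and you extract $\Sbf(2R/L;x',t)$ by replacing each $\phi(x_j+ktL^{k-1}m_j^{k-1})$ with $\phi(x_j)$ at pointwise cost $O_\phi(tR^{k-1})$, then summing absolute values over the $\asymp (R/L)^{n-1}$ indices $m'$. That triangle-inequality step destroys all cancellation in the $m'$-sum and yields $|E_2|\ll (R/L)^{n-1}tR^{k-1}$. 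This is \emph{not} the bound (\ref{E1_bound}); it is what you get from (\ref{E1_bound}) only after inserting the trivial estimate $|S(u;x_j,t)|\leq R/L$. The distinction matters: later (Lemma~\ref{lemma_E1E2}) the partial sums in (\ref{E1_bound}) are bounded nontrivially by $\ll R/(LQ^{1/2})$ via the Weil bound, giving $|\Ebf_1|\ll(c_1+c_2\delta_0)\big(R/(LQ^{1/2})\big)^{n-1}$, which is absorbable into the main term $\gg\big(R/(LQ^{1/2})\big)^{n-1}$. Your bound is larger by a factor $Q^{(n-1)/2}$, and since $c_1,c_2,\delta_0$ are fixed constants while $Q\to\infty$, it cannot be absorbed---the whole argument collapses.

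The paper avoids this by applying partial summation (\ref{partialsumint}) in $m_j$ \emph{before} Fourier inversion, removing the $m_j$-dependent cross phase $e\big(\sum_{\ell=1}^{k-1}\binom{k}{\ell}(Lm_j)^{k-\ell}\xi_j^\ell t\big)$ at a cost measured by $\sup_u|S(u;x_j,t)|$ rather than by the number of terms. That is precisely why the partial sums $S(u;x_j,t)$ appear in (\ref{E1_bound}). You identified the right difficulty in your final paragraph, but the resolution requires Abel summation, not the triangle inequality.
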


\subsection{Removal of higher-degree phase in $\lam$, and constraints on $x_1,t$}\label{sec_removal} In this section, we show that the  integral over $\lam$ in $T_t^{(P_k)}f(x)$ has magnitude at least $1-c_0$,  as long as we make appropriate constraints on $x_1$ and $t$. Rewrite the   integral over $\lam$ as
    \begin{eqnarray}
    & &\frac{1}{2\pi} e(Rx_1 + R^k t) \int_\R \hat{\phi}(\lambda) e (\lambda(S_1x_1+k R^{k-1} S_1 t))e \bigg(\sum_{\ell=2}^{k}\binom{k}{\ell} R^{k-\ell} (\lambda S_1)^\ell t\bigg) d\lambda.
    \end{eqnarray}
Since $\hat{\phi}$ is supported in $[-1,1]$, using integration by parts as in (\ref{intbyparts}) to remove the last exponential factor, followed by an application  of Fourier inversion, shows that this expression is equal to
        \begin{eqnarray}
       e(Rx_1+R^k t)e\bigg(\sum_{\ell=2}^{k}\binom{k}{\ell} R^{k-\ell} S_1^{\ell} t \bigg)\phi(S_1(x_1+kR^{k-1}t))+E_1,
    \end{eqnarray}
where the error term has absolute value
    \begin{equation}\label{E1}
    |E_1| \ll_k   \| \hat{\phi} \|_{L^1} \cdot \bigg(\sum_{\ell=2}^{k}\binom{k}{\ell} \ell R^{k-\ell} S_1^{\ell} t\bigg) \cdot 2 \ll_{\phi,k}   tR^k \cdot \sum_{\ell=2}^{k}   (S_1/R)^{\ell}
    \ll_{\phi,k}   tR^k   (S_1/R)^{2}.
    \end{equation}

    Next we place constraints on $t$ so we can bound $|E_1|$ from above and $\phi(S_1(x_1+kR^{k-1}t))$ from below.
We suppose that  $c_0, \del_0$ are as in (\ref{phi_size}). 
Fix another constant $0<c_1< 1/2$; we assume from now on that $x\in [-c_1,c_1]^n$. We then specify two constraints on $t$: first,  we require that for some small $0<c_2<1/2$ of our choice,
\beq\label{eqn: t condition 1 kpurediag}
    t  =-\frac{x_1}{kR^{k-1}}+\tau, \ \text{where } |\tau| \le \frac{c_2\delta_0}{kS_1R^{k-1}}.\eeq
Notice then that by choosing $c_1, c_2$ appropriately small, we can make $t$ as small as any multiple of $1/R^{k-1}$ as we like. In particular, by choosing $c_1$ and $c_2$ sufficiently small relative to $k,\del_0$, we can ensure that for each $j=2,\ldots, n$, 
\beq\label{xj_small}
   | x_j + k(LR')^{k-1}t| \leq \del_0,\eeq
   in which $R' := \lceil 2R/L \rceil -1,$
   and hence $|\phi (x_j + k(LR')^{k-1}t)| \geq 1-c_0/2$. This is a property we will apply momentarily in \S \ref{sec: full integral} below.

    Second, we require that for some small $0<c_3<1/2$ of our choice, 
    \beq\label{eqn: t condition 2 kpurediag}
    |t| \le \frac{c_3}{R^k(S_1/R)^2}.
    \eeq
    In particular, by choosing $c_3$ small enough relative to the implicit constant in the upper bound (\ref{E1}) for $|E_1|$, we  can bound $|E_1|$ by as small a constant as we like, and in particular, as small as $$|E_1| \leq c_0/2.$$

Third, as a consequence of (\ref{eqn: t condition 1 kpurediag}), $|S_1(x_1+kR^{k-1}t)| \leq \del_0$, so that $|\phi(S_1(x_1+kR^{k-1}t))| \geq 1-c_0/2$.
    Thus in total we have confirmed that the integral over $\lam$  in $T_t^{(P_k)}f(x)$ is at least $1-c_0$ in absolute value.
    
    Note that the requirements (\ref{eqn: t condition 1 kpurediag}) and (\ref{eqn: t condition 2 kpurediag}) are compatible as long as we assume that   $S_1 = R^{\sig}$ with 
\beq\label{sig}
\sig \leq 1/2,\eeq
as we do from now on.
    
Finally, note that we can ensure $t\in (0,1)$ by restricting $x_1 \in (-c_1, -c_1/2]$. 
 Then any corresponding $t$ satisfying the above constraints will belong to $(0,1)$ as long as $c_1/(kR^{k-1}) + c_2\del_0/(kS_1R^{k-1}) < 1$ and $c_1/(2kR^{k-1}) - c_2\del_0/(kS_1R^{k-1})>0$.
 This will occur for all sufficiently large $R$, say $R \geq R_2 = R_2(c_1,c_2,n,\phi,\sig).$

At its heart, the efficacy of the counterexamples we construct depends on  $k$ because of the higher order terms we encountered in this step; the constraint $t \ll R^{-(k-1)}$ ultimately affects the size of the prime denominators we use when we construct the set $\Omega^*$ in \S \ref{sec_Omega}.

\subsection{Removal of higher-degree phase components in terms of $\xi'$}\label{sec: remove higher degree}
In this section, we show that the integral over $\xi'$ in the expression for $|T_t^{(P_k)}f(x)|$ can be well-approximated by $(1-c_0)^{n-1}|\Sbf(2R/L;x',t)|$, plus an   error term. We will work one coordinate at a time, and it is convenient to define the one-dimensional exponential sum  
    \beq\label{sum1var}
    S(u; v ,t) := \sum_{R/L \le m<u} e(Lmv+L^k m^k t)
    \eeq
    for any $u \geq R/L.$

    \subsubsection{Expression for the integral over $\xi_j$}
 We begin with the  expression for the integral over $\xi_j$ in $T_t^{(P_k)}f(x)$, namely
\begin{multline}\label{xi_j_int_kpurediag}
     \frac{1}{2\pi} \int_{-1}^1 \hat{\phi}(\xi_j) e(\xi_jx_j + \xi_j^k t)
     \sum_{R/L \le m_j<2R/L} e(Lm_jx_j+L^k m_j^k t)\\
     \times e\bigg( \sum_{\ell=1}^{k-1} \binom{k}{\ell}L^{k-\ell} m_j^{k-\ell}\xi_j^\ell t\bigg) d\xi_j.
\end{multline}
We will show that for $x$ and $t$ as constrained above in (\ref{eqn: t condition 1 kpurediag}) and (\ref{eqn: t condition 2 kpurediag}),  this integral over $\xi_j$ is equal in absolute value to \beq\label{phi_E_2}
|\phi(x_j + k(LR')^{k-1}t)|\cdot|S(2R/L;x_j,t)| + |E_2|,\eeq
in which $R' = \lceil 2R/L \rceil-1$ and
\[ |E_2| \ll_{\phi,k} R^{k-1}|t| \sup_{R/L \leq u <2R/L} |S(u;x_j,t)|.\]
To show this, we first use partial summation to remove the dependency on $m_j$ of any terms involving both $m_j$ and $\xi_j$; this is useful for extracting the sum $S(2R/L;x_j,t)$. Then in a second step we  use integration by parts to remove all higher-order terms in $\xi_j$, to prepare for applying Fourier inversion.

    Let $R'=\lceil 2R/L \rceil-1$. 
    By partial summation as in (\ref{partialsumint}), the sum over $m_j$ is equal to 
   \beq\label{m_main_xi_kpurediag}
         e\bigg( \sum_{\ell=1}^{k-1}  \binom{k}{\ell} (LR')^{k-\ell} \xi_j^\ell t\bigg)S(2R/L;x_j,t) + E_3\eeq
 in which 
 \[ |E_3| \leq \int_{R/L}^{2R/L} |S(u;x_j,t)|\cdot \left|  \sum_{\ell=1}^{k-1}  \binom{k}{\ell}(k-\ell)L^{k-\ell}  u^{k-\ell-1}\xi_j^\ell t \right|du.\]
  Thus in particular for $|\xi_j| \leq 1$,
 \[|E_3| \ll_k (R/L)L^{k-1} (R/L)^{k-2}|t| \sup_{R/L \leq u \leq 2R/L} |S(u;x_j,t)|.\]
 If we then denote by $E_4$ the contribution of this error term to the integral over $\xi_j$, then
  \[ |E_4| \ll_k \|\hat{\phi}\|_{L^1[-1,1]} R^{k-1}|t| \sup_{R/L \leq u \leq 2R/L} |S(u;x_j,t)|.\]

Now we consider the contribution of the main term (\ref{m_main_xi_kpurediag}) to the integral (\ref{xi_j_int_kpurediag}), which is the expression
  \beq\label{xi_j_main_int_kpurediag}
  S(2R/L;x_j,t) \frac{1}{2\pi} \int_{-1}^1  \hat{\phi}(\xi_j) e (\xi_j(x_j +k(LR')^{k-1}t))e\bigg(  \sum_{\ell=2}^{k}  \binom{k}{\ell}(LR')^{k-\ell}  \xi_j^\ell t \bigg) d\xi_j.
  \eeq
In order to apply Fourier inversion we must remove the higher order terms in $\xi_j$, which we can do by applying integration by parts as in (\ref{intbyparts}). This shows that the integral  over $\xi_j$ in the previous line  is equal to 
 \[ e\bigg(  \sum_{\ell=2}^{k} \binom{k}{\ell} (LR')^{k-\ell} t \bigg)  \frac{1}{2\pi} \int_{-1}^1  \hat{\phi}(\xi_j) e(\xi_j(x_j +k(LR')^{k-1}t))d\xi_j + E_5,\]
 in which 
  \[ |E_5| \ll \|\hat{\phi}\|_{L^1[-1,1]}|t|\bigg(  \sum_{\ell=2}^{k}\binom{k}{\ell}\ell(LR')^{k-\ell} \bigg) \ll_{\phi,k} R^{k-2}|t|  .
 \]
 Thus the total contribution of $E_5$ to (\ref{xi_j_main_int_kpurediag}) is $E_6$, say, where 
 \[ |E_6| \ll_{\phi,k} R^{k-2}|t|\cdot |S(2R/L;x_j,t)|.\]
 Finally, we apply Fourier inversion to the main term. This shows that  (\ref{xi_j_main_int_kpurediag}) is equal to 
 \[  S(2R/L;x_j,t) e\bigg(  \sum_{\ell=2}^{k} \binom{k}{\ell} (LR')^{k-\ell} t \bigg) \phi(x_j + k(LR')^{k-1}t) + E_6.\]
We assemble this computation with our upper bound for $|E_4|$, and we conclude that the integral (\ref{xi_j_int_kpurediag}) can be expressed as
  \[   S(2R/L;x_j,t) e\bigg(  \sum_{\ell=2}^{k} \binom{k}{\ell} (LR')^{k-\ell} t \bigg) \phi(x_j + k(LR')^{k-1}t) +E_4+E_6.\]
This gives (\ref{phi_E_2}) and verifies the upper bound 
  \[ |E_2| \leq |E_4| + |E_6| \ll_{\phi,k} R^{k-1}|t| \sup_{R/L \leq u \leq 2R/L}|S(u;x_j,t)|.
  \]
  This proves our claim.

\subsubsection{Expression for the full integral over $\xi'$}\label{sec: full integral}
We now multiply together the expression (\ref{phi_E_2}) we have derived for the absolute value of each integral over $\xi_j$ for $2 \leq j \leq n$. We see that in absolute value, the integral over $\xi'$ in $T_t^{(P_k)}f(x)$ is equal to
\[
\Bigg|\prod_{j=2}^n  \phi(x_j+k(LR')^{k-1}t)\Bigg|\cdot |\Sbf(2R/L;x',t)|+ \Ebf_1,\]
in which the error term   includes all the cross terms accrued when we multiply (\ref{phi_E_2}) for $j=2,\ldots,n$. 
Precisely, we can write
\beq\label{E1_bound}
|\Ebf_1| \ll_{\phi,k} \sum_{\ell=0}^{n-2}   \bigg(  \sup_{2\leq j \leq n}|S(2R/L;x_j,t)|\bigg)^\ell \bigg( R^{k-1}|t|  \sup_{2\leq j \leq n} \sup_{R/L \le u<2R/L}  \left| S(u;x_j,t) \right| \bigg)^{n-1-\ell}.\eeq

Finally, in order to complete the proof of Proposition \ref{prop_approx}, recall the constraint on  $t$ given in (\ref{eqn: t condition 1 kpurediag}), and recall that by choosing $c_1$ and $c_2$ sufficiently small relative to $\del_0,k$, we have (\ref{xj_small}) so that $|\phi(x_j + k(LR')^{k-1}t)| \geq 1-c_0/2 \geq 1-c_0$ for each $j$.
To finish the proof of 
Proposition \ref{prop_approx}, we simply recall that the integral over  $\lam$ is at least $1-c_0$ in absolute value (and at most $\ll_\phi 1$ in absolute value). We apply the first fact when we multiply it by the main term for $\xi'$, and the second fact when we multiply it by the error term for $\xi'$.   This enlarges $\Ebf_1$ by a constant dependent on $\phi$, which we simply include in the implicit constant.
This completes the proof of Proposition \ref{prop_approx}.

In order to be more precise about our upper bound for $\mathbf{E}_1$, we will have to be more specific about the properties of $x$ and $t$. We turn to this in the next section, in which we construct the set $\Omega^*$ to which $x$ belongs. 
Ultimately, we will show in (\ref{bound_E1}) that for $x \in \Omega^*$ there is a choice of $t$ for which
$ |\Ebf_1| \ll_{\phi,k,n} (c_1 + c_2\del_0) (\frac{R}{LQ^{1/2}})^{n-1}$, where $c_1,c_2,\del_0$ can be chosen as small as we like, as in (\ref{eqn: t condition 1 kpurediag}).

We conclude this section by computing the $L^2$ norm, and hence the $H^s$ norm, of $f$.

\subsection{Computation of the $L^2$ norm}\label{sec_norm}
In order to prove  Theorem \ref{thm_main_max}, we must compute the $H^s$ norm of $f$, or in the form of its precursor Theorem \ref{thm_main}, we must compute the $L^2$ norm of $f$.
The norms $\|f\|_{H^s(\R^n)}$ and $\|f\|_{L^2(\R^n)}$ are comparable when $\hat{f}$ is supported in an annulus of radius $R \geq 1$, in which case 
\beq\label{HsL2}
R^s \| f\|_{L^2(\R^n)} \ll_s \|f\|_{H^s(\R^n)} \ll_s R^s \|f\|_{L^2(\R^n)}.
\eeq
Indeed, recall that
the Sobolev space $H^s(\R^n)$ consists of functions $f$ such that $G_{-s} \ast f \in L^2(\R^n)$, where $G_{-s}$ is the Bessel kernel defined by its Fourier transform $\hat{G}_{-s}(\xi) = (1+|\xi|^2)^{s/2}$. By Plancherel's theorem,  
\[
\|f\|_{H^s(\R^n)}^2 = \|G_{-s}*f\|_{L^2(\R^n)}^2 =\frac{1}{(2\pi)^n} \|\hat{G}_{-s}\hat{f}\|_{L^2(\R^n)}^2 = \frac{1}{(2\pi)^n}\int_{\R^n} (1+|\xi|^2)^{s}|\hat{f}(\xi)|^2 d\xi.
\]
In particular if $\hat{f}$ is supported in an annulus $\{(1/C)R\leq |\xi| \leq CR\}$ for a constant $C>1$, then (\ref{HsL2}) holds for all $R \geq 1$.

To compute the $L^2$ norm of $f$, Plancherel's theorem shows that it suffices to compute the $L^2$ norm of $\hat{f}$. By the definition of $f$ in (\ref{eqn: f}),  
\[\hat{f}(\xi_1, \xi') = \sum_{\substack{m'\in \Z^{n-1}\\ R/L \leq m_j < 2R/L
}} g_{m'}(\xi_1, \xi')
\]
with $g_{m'}(\xi_1, \xi') =S_1^{-1} \hat{\phi}(S_1^{-1}(\xi_1 - R))\hat{\Phi}_{n-1}(\xi' - Lm')$. Since $\hat{\phi}(\xi_j)$ is supported in $[-1,1]$, it follows that $g_{m'}$ is supported in $\mathcal{B} + (R,Lm')$ where $\mathcal{B}$ is the box $[-S_1,S_1] \times [-1,1]^{n-1}$. We see that for all sufficiently large $L$ ($L \geq 4$ suffices), the supports of $g_{m'}$ for distinct $m'$ are disjoint. Hence
\[ \|\hat{f}\|_{L^2(\R^n)}^2 = \sum_{\substack{m'\in \Z^{n-1}\\ R/L \leq m_j < 2R/L
}} \|g_{m'}\|_{L^2(\R^n)}^2.
\]
 By Plancherel's theorem again,  \[\|g_{m'}\|_{L^2(\R^n)}^2 = S_1^{-1}\|\hat{\phi}\|_{L^2(\R^n)}^{2n} = S_1^{-1}(2\pi)^{n}\|\phi\|_{L^2(\R^n)}^{2n} .\]
In conclusion,
\beq\label{L2norm}
S_1^{-1/2} \lfloor R/L \rfloor ^{\frac{n-1}{2}} \|\phi\|^n_{L^2(\R)} \leq \|f\|_{L^2(\R^n)} \leq S_1^{-1/2} \lceil R/L \rceil^{\frac{n-1}{2}} \|\phi\|^n_{L^2(\R)}.
\eeq
 In particular, to satisfy the requirements of Theorem \ref{thm_main}, for each value $R$, we can formally define our counterexample function to be $\tilde{f}  = f/\|f\|_{L^2},$ so that it has $L^2$ norm 1. But for simplicity we proceed for now with $f,$ and only apply this normalization in our final arguments in \S \ref{sec_choices}.

\begin{remark}\label{remark_quarter}
For all $n \geq 1,$ our results adapt easily to show that $s \geq 1/4$ is necessary for (\ref{max_op}) to hold. Set $f(x) = \phi(S_1x_1)e(Rx_1)\Phi_{n-1}(x')e(R \cdot x')$, with the understanding that if $n=1$, only the first two factors arise. The method used to prove Proposition \ref{prop_approx} shows that $|T_t^{(P_k)}f(x)| \gg 1$ for a neighborhood of $x$ with measure $\gg 1$. Thus $\| \sup_{0<t<1}|T_t^{(P_k)}f|\|_{L^1(B_n(0,1))}/\|f\|_{L^2(\R^n)} \gg S_1^{1/2}$, with $S_1 =R^\sig$ where $\sig \leq 1/2$. Choosing $\sig=1/2$ leads to the necessary condition $s \geq 1/4.$
\end{remark}

\section{The sets $\Omega$ and $\Omega^*$}\label{sec_Omega}

We have reduced the study of $T_t^{(P_k)}f(x)$ for our function $f$ to the  study of the exponential sum $\Sbf(2R/L;x',t)$ defined in (\ref{expsumfull}). It is now convenient to define new variable names:
\begin{equation}\label{expnot}
s := L^k\tau, \ \ y_1 := -\frac{L^k}{kR^{k-1}}x_1 \ (\mathrm{mod} \ 2\pi), \ \ y_j := Lx_j \ (\mathrm{mod} \ 2\pi).
\end{equation}
 In this notation, we can now write each one-variable sum defined in (\ref{sum1var}) as
\begin{equation}\label{expsum}
S(u;x_j,t)=\sum_{R/L \le m_j<u} e(Lm_jx_j+L^km_j^kt)=\sum_{R/L \le m_j<u} e(m_jy_j+m_j^k(y_1+s)).
\end{equation}

We will define a set $\Omega$ in which the variable $y$ lies, and correspondingly a set $\Omega^*$ in which the variable $x$ lies, such that for each $x \in \Omega^*,$ there is a choice of $t$ such that 
\beq\label{SE}
|\Sbf(2R/L;x',t)| \gg \left\lfloor \frac{R}{Lq} \right\rfloor^{n-1} q^{(n-1)/2}\eeq
 for some prime $q$ in a certain dyadic range $[Q/2,Q]$, where 
 \[ Q = R^\kappa, \qquad 0<\kappa<1\]
   is a parameter that we will later choose optimally to be a small power of $R$.

Our goals for $\Omega$ (and correspondingly $\Omega^*$) have two conflicting priorities. In Theorem \ref{thm_main} we aim to show that 
\[ \|\sup_{0<t<1} |T_t^{(P_k)}f(x)| \|_{L^1(B_n(0,1))} \]
is large. Thus we aim to show that for all $x \in \Omega^*$ we can choose  $t$ to make $|T_t^{(P_k)}f(x)|$ large, and moreover we aim to show that $\Omega^*$ has measure as large as possible, that is, $|\Omega^*| \gg 1$. We will not quite achieve this, but we will show that $|\Omega^*| \gg (\log Q)^{-1}$.

On the other hand, we aim for $\Omega^*$ to have the property that for every $x \in \Omega^*,$ there exists a choice of $t$ such that the previous constraints (\ref{eqn: t condition 1 kpurediag}) and (\ref{eqn: t condition 2 kpurediag}) hold for $t$ and such that all the error terms we have accumulated so far in $\Ebf_1$ (and further error terms we will accumulate in the approximation (\ref{SE})) are sufficiently small. This points to making $\Omega^*$ as small as possible, which clearly conflicts with the first goal. We will work with abstract parameters, and will choose these parameters at the end of the argument in order to optimize the balance between these two goals.

\subsection{Heuristics for a model of the set $\Omega$}
Our model for the set $\Omega$ is as a union of the form:
\[   \Union_{\bstack{Q/2 \leq q \leq Q}{\text{$q$ prime}}} \hspace{1em}
\sideset{}{^*}\Union_{1 \leq a_1 < q} \hspace{1em} \sideset{}{^*}\Union_{a_2,\ldots, a_n } \{ |y_1 - 2\pi a_1/q| <U(q), |y_j - 2\pi a_j/q| <V(q), 2 \leq j \leq n\}.\]
We do not yet specify the widths $U(q)$ and $V(q)$ of the intervals, but so that they do not overlap, we may naturally think of them as functions $q^{-\al} \ll U(q) \ll q^{-\al}$, $q^{-\be} \ll V(q) \ll q^{-\be}$ for some $\al,\be \geq 1$.
The restriction $*$ on the unions indicates for each $q$ we will only choose a certain subset   $a_1,a_2,\ldots,a_n$ of the values $1 \leq a_j \leq q$.  

We wish to restrict the unions to a collection of $a_1,a_2,\ldots,a_n$ chosen so that the complete exponential sum 
 \beq \label{expr_T} T(a_1,a_j;q) = \sum_{1 \leq n \leq q} e(2\pi  (a_1n^k + a_jn)/q) \eeq
is on the order of size  $q^{1/2}$ for each $2 \leq j \leq n.$ 
In fact,  we know that  $\gg q^n$ choices for $a_1,a_2,\ldots, a_n$ lead to this property, by Corollary \ref{cor_T_prod_big}.
Precisely, set $\al_1=1/2$ and $\al_2 = k^{-2}/4$, as in that corollary.
Assume that
\[ Q> 2 \cdot (\al_2/4)^{-1} = 32 k^2,\]
so that all $q \in [Q/2,Q]$ satisfy $q>k$ as well as  $q>(\al_2/4)^{-1}.$
(When we ultimately choose $Q$  to be a small power of $R$, this will hold for all $R \geq R_3(n,k).$) 

By the prime number theorem, for each $X \geq 2$ there are $X/\log X + O(X/(\log X)^2)$ primes $q \leq X$. Thus there exists a universal constant $Q_0$ such that for all  $Q \geq Q_0$ there are at least $(1/4)Q/\log Q$ primes $q \in [Q/2,Q]$. (Again, when we choose $Q$ to be a small power of $R$, this will hold for all $R \geq R_4(n,k)$.)

Now for each prime $q \in [Q/2,Q]$  define the good set $\Gcal^*(q)$   to denote the set of $a_1, a_2, \ldots, a_n$ modulo $q$ for which 
\[ \al_1^{n-1} q^{(n-1)/2} \leq |T(a_1,a_2;q)| \cdots |T(a_1,a_n;q)|   \leq (k-1)^{n-1} q^{(n-1)/2}.
\]
Then by Corollary \ref{cor_T_prod_big},
$|\Gcal^*(q)| \geq (\al_2/2)^n(1-2^{-n}) q^n. $

It remains to decide how large the neighborhoods of the rationals should be, in our definition of $\Omega$. 
On the one hand, $U(q)$ and $V(q)$ must be sufficiently large that the set $\Omega$ has positive measure in $[0,1]^n$, independent of $R$ (or losing at most a logarithmic factor of $R$). 

We could be motivated to choose $V(q)$ according to simultaneous Dirichlet approximation in $n-1$ variables, so that the neighborhoods in the last $n-1$ dimensions fill a positive measure set in $[0,2\pi]^{n-1}.$ 
Simultaneous Dirichlet approximation shows that for every $Q \geq 1$, every point $(y_2,\dots,y_n)$ in $[0,2\pi]^{n-1}$ can be approximated by $(2\pi a_2/q,\dots,2\pi a_n/q)$  with accuracy
\begin{equation}
|y_j-2\pi a_j/q | \le \frac{2\pi}{qQ^{1/(n-1)}},  \ \ 2 \le j \le n.
\end{equation}
This would suggest taking
\beq\label{V_lower}
V(q) \gg q^{-(1+1/(n-1))}.
\eeq
In fact, the complementary condition $V(q) \ll q^{-(1+1/(n-1))}$ will arise naturally in Proposition \ref{prop_box_omega} below, which shows how to compute the measure of a union of boxes from the measures of the individual boxes, if the boxes are appropriately well-distributed.

On the other hand,  $U(q)$ must be sufficiently small that given $y_1$ in an interval of length $2U(q)$ centered at $2\pi a_1/q$, if we set $s = y_1 - 2\pi a_1/q$, where $s=L^k\tau$, then  $\tau$ meets the constraints (\ref{eqn: t condition 1 kpurediag}) and (\ref{eqn: t condition 2 kpurediag}). Thus we are motivated to choose (roughly)
$U(q) \approx L^k \tau \approx L^k /(S_1 R^{k-1}).$  

Similarly, $V(q)$ must be sufficiently small that given $y_j$ in an interval of length $2V(q)$ centered at $2\pi a_j/q$, the error accrued when we replace $y_j$ by $2\pi a_j/q$ in the exponential sum $S(2R/L;x_j,t)$ is sufficiently small. This error term is the term $E$ appearing in Proposition \ref{prop_sum_approx}, applied with $N=R/L$. By the remark following that proposition, we are then motivated to choose $V(q) \ll (R/L)^{-1}$, up to a small constant factor of our choice.
 To make this compatible with the previous restriction (\ref{V_lower}) on $V$, we see that the relation 
 \beq\label{qrl}
Q^{-(1+1/(n-1))} \ll (R/L)^{-1}
\eeq
 must be satisfied.

\subsection{Measure considerations for a union of well-distributed sets}
 
Our next goal is to show that the measure of $\Omega$ (and correspondingly of $\Omega^*$) is sufficiently large. In our   degree $k$ setting, our argument  diverges from the previous works \cite{Bou16} and \cite{Pie20} in the quadratic setting. This is because we   construct $\Omega$ as a union
\[ \Union_q \Union_{(a_1,a_2,\ldots,a_n) \in \Gcal^*(q)} I_{q,\underline{a}} \]
of certain boxes $I_{q,\underline{a}}$,
in which the sets $\Gcal^*(q)$ have sufficiently large cardinality but  are otherwise \emph{inexplicit}. Thus the explicit method developed in \cite{Pie20}   to compute the measure of $\Omega$ does not apply. 

Instead we take an abstract approach. We prove   that if a set $I$ is constructed as a union of sets $I_j$, and if these sets $I_j$ are sufficiently well-distributed, then the measure of $I$ is comparable to the sum of the measures of the $I_j$. After we prove this abstract lemma, we use an arithmetic argument (and the primality of $q$) to prove that in our setting, the boxes $I_{q,\underline{a}}$ corresponding to tuples in $\Gcal^*(q)$ are sufficiently well-distributed. Thus we can compute a lower bound for the measure of $\Omega$ by computing the measures of the individual boxes.

\begin{lemma}\label{lemma_union_gen}
 Suppose we have a finite index set $J$ and a collection of measurable sets $\{ I_j\}_{j \in J}$ in $\R^m$.

 (i)  Suppose the  sets $\{I_j\}_{j \in J}$ have bounded overlap, in the sense that there exists a universal constant $C_0$ such that every point lying in the union $\Union_{j \in J} I_j$ lies in at most $C_0$ of the sets $I_j$.
 Then 
 \[ |\Union_{j \in J} I_j|  \geq  C_0^{-1} \sum_{j \in J} |I_j|.\]
 
 (ii) Suppose the sets $\{I_j\}_{j \in J}$ have comparable sizes, in the sense that for all $j \in J$, $B_0 \leq |I_j| \leq B_1$, 
and that the sets are regularly distributed, in the sense that  
\beq\label{jjbound} \# \{ j , j' \in J : I_j \intersect I_{j'} \neq \emptyset \} \leq C_1 |J|.\eeq
Then 
\[ |\Union_{j \in J} I_j|  \geq  \frac{B_0}{B_1C_1} \sum_{j \in J} |I_j|. \]
\end{lemma}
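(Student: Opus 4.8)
The plan is to deduce both parts from a single elementary tool: the Cauchy–Schwarz inequality applied to the overlap function $N(x) := \sum_{j \in J} \mathbf{1}_{I_j}(x)$. First I would record the trivial identity $\int_{\R^m} N(x)\,dx = \sum_{j \in J} |I_j|$, and observe that $N$ is supported on $I := \Union_{j \in J} I_j$. For part (i) the argument is immediate and does not even need Cauchy–Schwarz: the hypothesis says $N(x) \le C_0$ for every $x \in I$, so $\sum_{j} |I_j| = \int_I N(x)\,dx \le C_0 |I|$, which is the claimed bound after rearranging. (I would state this first as a warm-up, since part (ii) uses the same bookkeeping.)

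For part (ii), the point is that we no longer have a pointwise bound on $N$, only the second-moment bound coming from \eqref{jjbound}. Write
\[
\int_{\R^m} N(x)^2\,dx = \sum_{j,j' \in J} |I_j \cap I_{j'}|.
\]
On the right, each nonempty intersection has measure at most $B_1$ (since $|I_j \cap I_{j'}| \le |I_j| \le B_1$), and by \eqref{jjbound} there are at most $C_1 |J|$ pairs $(j,j')$ with $I_j \cap I_{j'} \ne \emptyset$, so $\int N^2 \le B_1 C_1 |J|$. Meanwhile $\int N = \sum_j |I_j| \ge B_0 |J|$, hence $|J| \le B_0^{-1}\sum_j |I_j|$ and therefore $\int N^2 \le (B_1 C_1 / B_0)\sum_j |I_j|$. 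Now apply Cauchy–Schwarz on the set $I$, using that $N$ is supported there:
\[
\Big(\sum_{j \in J} |I_j|\Big)^2 = \Big(\int_I N(x)\,dx\Big)^2 \le |I| \cdot \int_I N(x)^2\,dx \le |I| \cdot \frac{B_1 C_1}{B_0} \sum_{j \in J} |I_j|.
\]
Dividing by $\sum_j |I_j|$ (which we may assume is positive, else there is nothing to prove) gives $\sum_j |I_j| \le \frac{B_1 C_1}{B_0}\,|I|$, i.e. $|I| \ge \frac{B_0}{B_1 C_1}\sum_j |I_j|$, as claimed.

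I do not anticipate a genuine obstacle here — this is a clean second-moment argument. The only points requiring a little care are: (a) making sure $N$ is integrable and finite a.e., which follows since $J$ is finite and each $I_j$ is measurable (one should also note all the sets have finite measure, which is forced by $|I_j| \le B_1 < \infty$ in part (ii), and can be assumed in part (i) as the statement is vacuous otherwise); (b) in part (ii), the degenerate case $\sum_j |I_j| = 0$, which is handled by the remark above; and (c) confirming that the counting hypothesis \eqref{jjbound} is stated for \emph{ordered} pairs $(j,j')$ including $j = j'$, so that it matches the expansion of $\int N^2$ exactly — if it were unordered pairs or excluded the diagonal one would pick up a harmless factor of $2$ plus the diagonal term $\sum_j |I_j| \le B_1|J|$, which does not change the shape of the bound. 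I would state the proof with the ordered-pair reading, matching how it is used in Proposition \ref{prop_box_omega}.
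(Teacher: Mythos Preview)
Your proof is correct and essentially matches the paper's approach: both define the overlap function $N = \sum_j \mathbf{1}_{I_j}$ and, for part (ii), bound $\int N^2 \le B_1 C_1 |J| \le (B_1 C_1/B_0)\sum_j|I_j|$ before applying Cauchy--Schwarz on the support of $N$. The only difference is that for part (i) you use the direct pointwise bound $N \le C_0$ rather than routing through the Cauchy--Schwarz framework as the paper does, which is if anything cleaner.
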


We will apply case (ii), but without any additional work we include (i) as a simpler model case.
Note that the trivial upper bound in (\ref{jjbound}) is $|J|^2$; (\ref{jjbound}) can be thought of as an assumption of bounded overlap on average.

\begin{proof}
 Define a function $f$ acting on $\R^m$ by 
\[  f(x) = \sum_{j \in J} \onebf_{I_j} (x).\]
By the Cauchy-Schwarz inequality, 
\[ |\Union_{j \in J} I_j| = |\supp(f)|   \geq \frac{\|f\|_{L^1(\R^m)}^2}{\|f\|_{L^2(\R^m)}^2}.\]
On the one hand, 
 \[ \|f\|_{L^1}  = \sum_{j \in J} |I_j|  .\]
 On the other hand,
 \[ \|f\|_{L^2}^2 = \int_{\R^m}  \sum_{j, j'  \in J} \onebf_{I_j}(x) \onebf_{I_{j'}}  (x) dx .
 \]
We now apply either of the hypotheses. If (i) holds, then
\[   \|f\|_{L^2}^2 \leq \sum_{j \in J} |I_j| \cdot \# \{ j' : I_j \intersect I_{j'} \neq \emptyset \} \leq C_0 \sum_{j \in J} | I_j| \leq C_0 \|f\|_{L^1}.\]
Thus 
\[ |\Union_{j \in J} I_j|  \geq \frac{\|f\|_{L^1}^2}{C_0\|f\|_{L^1}} = C_0^{-1}\|f\|_{L^1}.\]
Alternatively, suppose that condition (ii) is met. 
Then  
\begin{align*} \|f\|_{L^2}^2 & \leq   B_1\# \{ j , j' \in J : I_j \intersect I_{j'} \neq \emptyset \} \\
&\leq B_1C_1 |J|  
\leq  B_1 C_1 \sum_{j} \frac{|I_j|}{B_0} = C_1 (B_1/B_0) \|f\|_{L^1}.
\end{align*}
Thus
\[ |\Union_{j \in J} I_j|  \geq \frac{\|f\|_{L^1}^2}{C_1 (B_1/B_0)\|f\|_{L^1}} = \frac{B_0}{B_1C_1}\|f\|_{L^1}.\]
\end{proof}

\subsection{Construction of well-distributed boxes centered at rationals}

Consider a set $\Pcal$ of primes with $\Pcal \subset [Q/2,Q]$.
To each such prime $q \in \Pcal$, we associate  a set $\Gcal^*(q)$ of   tuples $(a_1,a_2,\ldots, a_n)$, with 
$\Gcal^*(q) \subset [1,q]^n$. 
We assume that all the sets $\Gcal^*(q)$ are of comparable size, in the sense that there are uniform constants $D_1, D_2$ such that for all $q, q' \in \Pcal$, 
\beq\label{DDG}
D_1 \leq \frac{|\Gcal^*(q)|}{|\Gcal^*(q')|} \leq D_2.\eeq
This will be true in our application by Corollary \ref{cor_T_prod_big} and the fact that $\Pcal$ lies in a dyadic range.

 To each choice of $q \in \Pcal$ and tuple $\underline{a} \in \Gcal^*(q)$ we associate a box centered at $(2\pi a_1/q,\ldots, 2\pi a_n/q)$, denoted by $I_{q,\underline{a}}$. 
Let us suppose that the box has side-length $h_1(q)$ in the first coordinate and $h_2(q)$ in the coordinates $j=2,\ldots, n$.   Assume that 
\beq\label{hal}
 D_3 q^{-\al} \leq h_1(q)\leq D_4 q^{-\al}, \qquad  D_5 q^{-\be} \leq h_2(q)\leq D_6 q^{-\be},
\eeq
for  constants $0< D_3 < D_4< 1$ and $0<D_5<D_6<1$ of our choice, and for some $1 \leq \al,\be \leq 2.$
 In particular $h_i(x)$ is a decreasing function of $x$.

\begin{prop}\label{prop_box_omega}
In the setting described above, if $|\Gcal^*(q)|\gg q^n$ for all $q \in \Pcal \subset [Q/2,Q]$, and $h_2(x) \ll x^{-1-1/(n-1)},$ then 
\[| \Union_{q \in \Pcal} \Union_{\underline{a} \in \Gcal^*(q)} I_{q,\underline{a}}| \gg  \sum_{q \in \Pcal} \sum_{\underline{a} \in \Gcal^*(q)} |I_{q,\underline{a}}|,\]
in which the implicit constant may depend on $n,\al,\be, D_1,D_2$ but is independent of $Q$.

If we further assume that $|\Pcal| \gg Q/\log Q$, then the lower bound is of the form 
\[ | \Union_{q \in \Pcal} \Union_{\underline{a} \in \Gcal^*(q)} I_{q,\underline{a}}| \gg    Q^{n+1}h_1(Q) h_2(Q)^{n-1} (\log Q)^{-1},\]
in which the implicit constant may depend on $n,\al,\be, D_1,D_2$ but is independent of $Q$.

\end{prop}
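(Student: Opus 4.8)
The plan is to deduce Proposition~\ref{prop_box_omega} from Lemma~\ref{lemma_union_gen}(ii), applied with index set $J = \{(q,\underline{a}) : q \in \Pcal,\ \underline{a} \in \Gcal^*(q)\}$ and the family of boxes $\{I_{q,\underline{a}}\}_{(q,\underline{a})\in J}$. The comparable-size hypothesis of that lemma is immediate: every $q \in \Pcal \subset [Q/2,Q]$ satisfies $Q/2 \le q \le Q$, so by (\ref{hal}) each box has volume $|I_{q,\underline{a}}| = h_1(q)h_2(q)^{n-1} \asymp Q^{-\al-(n-1)\be}$, and one may take $B_0 \leq |I_{q,\underline{a}}| \leq B_1$ with $B_1/B_0$ bounded in terms of $n,\al,\be$ and the constants in (\ref{hal}). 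The real work is verifying the regular-distribution bound (\ref{jjbound}), and this is where the primality of the denominators enters.

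I would first dispose of intersecting pairs $I_{q,\underline{a}}\cap I_{q',\underline{a}'}\neq\emptyset$ with $q=q'$. If $\underline{a}\neq\underline{a}'$, choose a coordinate $j$ with $a_j\neq a_j'$; then, since $a_j,a_j'\in[1,q]$ lie in distinct residue classes mod $q$, the two centers differ by at least $2\pi/q$ in that coordinate, while the common side-length there is at most $D_4 q^{-\al}$ (if $j=1$) or $D_6 q^{-\be}$ (if $j\ge 2$), which is $<2\pi/q$ because $\al,\be\geq 1$ and $D_4,D_6<1$. Hence such boxes are disjoint, so the only $q=q'$ pairs counted in (\ref{jjbound}) form the diagonal, contributing exactly $|J|$. (Here and below one works on the torus $(\R/2\pi\Z)^n$; the wraparound at $0\equiv 2\pi$ is harmless since the side-lengths are $o(1)$.)

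The core of the argument is to bound the number of intersecting pairs with $q\neq q'$. Suppose $I_{q,\underline{a}}\cap I_{q',\underline{a}'}\neq\emptyset$; coordinate-wise overlap gives $|a_j/q - a_j'/q'|\leq (h^{(j)}(q)+h^{(j)}(q'))/(4\pi)$, where $h^{(1)}=h_1$ and $h^{(j)}=h_2$ for $j\geq 2$. Put $N_j := a_jq' - a_j'q \in\Z$. Using $q,q'\in[Q/2,Q]$, the monotonicity and size of $h_1,h_2$ in (\ref{hal}), and the hypothesis $h_2(x)\ll x^{-1-1/(n-1)}$, one gets $|N_1|\ll Q^{2-\al}\ll Q$ and $|N_j|\ll Q^{1-1/(n-1)}$ for $2\leq j\leq n$. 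The decisive point is that, because $q\neq q'$ are coprime primes, $q$ is invertible mod $q'$, so $N_j$ pins down $a_j'$ via $a_j'\equiv -N_j q^{-1}\pmod{q'}$ with $a_j'\in[1,q']$, and then $a_j=(N_j+a_j'q)/q'$; thus the tuple $(q,q',N_1,\dots,N_n)$ determines the pair $((q,\underline{a}),(q',\underline{a}'))$ completely. Consequently the number of intersecting pairs with $q\neq q'$ is
\[
\ll |\Pcal|^2\cdot Q\cdot\big(Q^{1-1/(n-1)}\big)^{n-1} = |\Pcal|^2 Q^{n-1},
\]
the three factors counting the choices of $(q,q')$, of $N_1$, and of $(N_2,\dots,N_n)$. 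Since $\Pcal\subset[Q/2,Q]$ forces $|\Pcal|\leq Q$, and since $|\Gcal^*(q)|\gg q^n\asymp Q^n$ gives $|J|=\sum_{q\in\Pcal}|\Gcal^*(q)|\gg |\Pcal|Q^n$, this off-diagonal count is $\ll_n |J|$. Adding the diagonal, (\ref{jjbound}) holds with $C_1\ll_n 1$, so Lemma~\ref{lemma_union_gen}(ii) yields $|\Union_{(q,\underline{a})}I_{q,\underline{a}}|\geq (B_0/(B_1C_1))\sum|I_{q,\underline{a}}|\gg\sum|I_{q,\underline{a}}|$, which is the first assertion. For the second, I would estimate $\sum_{q\in\Pcal}\sum_{\underline{a}\in\Gcal^*(q)}|I_{q,\underline{a}}| = \sum_{q\in\Pcal}|\Gcal^*(q)|h_1(q)h_2(q)^{n-1}\gg |\Pcal|\,Q^n h_1(Q)h_2(Q)^{n-1}\gg Q^{n+1}h_1(Q)h_2(Q)^{n-1}(\log Q)^{-1}$, using $|\Gcal^*(q)|\gg q^n$, $q\asymp Q$, $h_i(q)\asymp h_i(Q)$, and $|\Pcal|\gg Q/\log Q$.

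The step I expect to be the main obstacle is the off-diagonal count. The naive approach—fix $(q,\underline{a})$, and note that the side-lengths in coordinates $2,\dots,n$ being $\ll Q^{-1-1/(n-1)}$ force at most one admissible $\underline{a}'$ for each $q'$—yields only $\ll |\Pcal|^2 Q^n = |\Pcal|\cdot|J|$, too weak by a factor $|\Pcal|$. Re-parametrizing by the integers $N_j$ is what recovers the missing power of $Q$: the hypothesis $h_2(x)\ll x^{-1-1/(n-1)}$ confines $(N_2,\dots,N_n)$ to a box of volume $\ll Q^{n-2}$, one factor of $Q$ below the $Q^n$ possible $\underline{a}$, and coprimality of $q$ and $q'$ is precisely what lets one reconstruct $\underline{a},\underline{a}'$ from $(q,q',N_1,\dots,N_n)$. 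Minor additional care is needed for the torus wraparound and for the disjointness of boxes sharing a denominator, both addressed above.
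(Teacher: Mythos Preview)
Your proof is correct and follows essentially the same approach as the paper: apply Lemma~\ref{lemma_union_gen}(ii), handle the $q=q'$ case by showing distinct boxes with the same denominator are disjoint (since side-lengths are $<2\pi/q$), and for $q\neq q'$ parametrize intersecting pairs by the integers $N_j=a_jq'-a_j'q$, using coprimality of distinct primes to recover $\underline{a},\underline{a}'$ uniquely from $(q,q',N_1,\dots,N_n)$. The paper writes the off-diagonal count as $|\Pcal|^2 Q^{2n}h_1(Q/2)h_2(Q/2)^{n-1}$ and then inserts the bounds $h_1\ll Q^{-1}$, $h_2\ll Q^{-1-1/(n-1)}$, while you insert those bounds one step earlier to get $|\Pcal|^2 Q^{n-1}$ directly; both arrive at the same estimate and compare it to $|J|\gg|\Pcal|Q^n$ via $|\Pcal|\leq Q$.
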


 Note that since we assume $h_2(x) \ll x^{-1-1/(n-1)}$, in order for the right-hand side to possibly be $\gg (\log Q)^{-1}$, we would need to take $h_1(x) \gg x^{-1}$. Combined with the hypothesis that $h_1(x)\ll x^{-1}$, this determines that $h_1(x) \approx x^{-1}$. Similarly, comparison of the hypothesis with (\ref{V_lower}) determines in our application that $h_2(x) \approx x^{-1-1/(n-1)}$.

\begin{proof}
We check that this setting obeys hypothesis (ii) of the previous lemma.
The measure of each box $I_{q,\underline{a}}$ is 
\[ h_1(Q) h_2(Q)^{n-1}\leq  |I_{q,\underline{a}}| \leq h_1(Q/2) h_2(Q/2)^{n-1} .  \]
In the notation of the previous lemma,  under the assumption on the functions $h_1,h_2$,
\[
\frac{B_0}{B_1} = \frac{h_1(Q) h_2(Q)^{n-1}}{h_1(Q/2) h_2(Q/2)^{n-1}} \gg  1,
\]
independent of $Q$.

We also need to verify (\ref{jjbound}), for which it suffices to show that 
\beq\label{jjbound_app}
\# \{ (q,\underline{a}), (q', \underline{a}') : I_{q,\underline{a}} \intersect I_{q',\underline{a}'} \neq \emptyset\} \ll |\Pcal| \cdot \min_{q \in \Pcal} |\Gcal^*(q)|.\eeq
The contribution where $(q,\underline{a})= (q', \underline{a}')$ as tuples is at most $|\Pcal| \cdot \max_q |\Gcal^*(q)| \ll |\Pcal| \cdot \min_q |\Gcal^*(q)|$,  under the assumption  (\ref{DDG}) that all the sets $\Gcal^*(q) $ are of comparable size. Thus we consider the case where these tuples are not identical. Supposing  $I_{q,\underline{a}} \intersect I_{q',\underline{a}'} \neq \emptyset$ then it must be the case that simultaneously
\begin{align*}
    |a_1/q - a_1'/q'| &\leq (1/2) h_1(q) + (1/2) h_1(q')\\
     |a_j/q - a_j'/q'| & \leq (1/2) h_2(q) + (1/2) h_2(q'), \qquad 2 \leq j \leq n.
\end{align*}

If $q=q'$ then the upper bound (\ref{hal})  assumed on $h_1(q)$ shows that 
$|a_1 - a_1'| \leq D_4 < 1$,
and $|a_j - a_j'| \leq D_6 <1$ for $2 \leq j \leq n$, so that we would obtain $(q,\underline{a}) = (q',\underline{a}'),$ contrary to our assumption.  

Thus it only remains to consider the case with $q \neq q' \in \Pcal \subset [Q/2, Q]$. Then we learn from the above relations that simultaneously 
\begin{align*}
     |a_1q' - a_1'q| &\leq   Q^2 h_1(Q/2)  \\  
     |a_jq' - a_j'q| & \leq Q^2 h_2(Q/2), \qquad 2 \leq j \leq n.
\end{align*}
Note that under the assumptions on $h_1,h_2$, in each case $Q^2 h_i(Q/2) \gg1$.

If primes $q$ and $q'$ with $\gcd(q,q')=1$ are fixed,  then we claim that the representation of any integer by $a_iq' -a_i'q$ with  $1 \leq a_i \leq q, 1 \leq a_i' \leq q'$ is unique. 
Indeed, suppose that there  is also a representation by $1 \leq b_i \leq q, 1 \leq b_i' \leq q'$. Then $b_iq'-b_i'q = a_iq' -a_i'q$, so that $(b_i - a_i)q' = (b_i'-a_i')q$. Then the fact that $\gcd(q,q')=1$  shows that $q|(b_i-a_i)$ and $q'| (b_i'-a_i')$, which suffices to show that $b_i=a_i$ and $b_i'=a_i'$.

Thus once an integer $m$ with $|m| \leq Q^2 h_1(Q/2) $ is fixed, there is (at most) one choice of a pair $a_1,a_1'$ with $a_1q' - a_1'q=m$. Similarly, for each $j=2,\ldots, n$, once an integer $m$ with $|m| \leq Q^2 h_2(Q/2) $ is fixed, there is (at most) one choice of $a_j,a_j'$ with $a_jq' - a_j'q=m$. Thus once $q \neq q' \in \Pcal$   are fixed, we obtain at most $Q^{2n}h_1(Q/2)h_2(Q/2)^{n-1}$ choices of boxes $I_{q,\underline{a}}, I_{q',\underline{a}'}$   that can intersect. 

In total, we have so far shown that
\[ \# \{ (q,\underline{a}), (q', \underline{a}') : I_{q,\underline{a}} \intersect I_{q',\underline{a}'} \neq \emptyset\} \ll |\Pcal|^2 Q^{2n}h_1(Q/2)h_2(Q/2)^{n-1} +  |\Pcal| \cdot \min_{q \in \Pcal} |\Gcal^*(q)|.\]
In order for this to be sufficiently small to verify (\ref{jjbound_app}), we require that 
\beq\label{h2req}
 |\Pcal| Q^{2n}h_1(Q/2)h_2(Q/2)^{n-1} \ll  \min_{q \in \Pcal} |\Gcal^*(q)|.\eeq
We certainly have $|\Pcal| \ll Q/\log Q$ and $h_1(Q) \ll Q^{-1}$. 
If we assume, as in the hypothesis of the proposition, that  
  $ \min_{q \in \Pcal} |\Gcal^*(q)| \gg Q^n$ and $h_2(Q) \ll Q^{-1-1/(n-1)}$, then (\ref{h2req}) is satisfied.
  This concludes the proof that the hypothesis (\ref{jjbound}) of the lemma is satisfied in our setting.
  
We now can apply the lemma, and hence 
\[   | \Union_{q\in \Pcal} \Union_{\underline{a} \in \Gcal^*(q)} I_{q,\underline{a}}| \gg  \sum_{q \in \Pcal} \sum_{\underline{a} \in \Gcal^*(q)} |I_{q,\underline{a}}|.\]
Finally note that each box has measure $|I_{q,\underline{a}}| = h_1(q)h_2(q)^{n-1}\gg h_1(Q) h_2(Q)^{n-1}$. 
If we additionally assume that $|\Pcal| \gg Q/\log Q$, then the lower bound is of the form 
\[ \gg  Q^{n+1}h_1(Q) h_2(Q)^{n-1}/\log Q.\]
\end{proof}

\subsection{Formal definition of $\Omega$}
We now formally construct the set $\Omega$, and Proposition \ref{prop_box_omega} will allow us to conclude immediately that it has the desired measure.

\begin{prop}\label{prop_omega}
Let $Q > \max\{32k^2,Q_0\}.$
Define for each prime $q \in [Q/2,Q]$ the good set $\Gcal^*(q)$   to denote the set of $a_1, a_2, \ldots, a_n$ modulo $q$ for which
\beq\label{Tprod_lower}
(1/2)^{n-1} q^{(n-1)/2} \leq \prod_{j=2}^n |T(a_1,a_j;q)|  \leq (k-1)^{n-1} q^{(n-1)/2} .
\eeq

Let $0<c_4, c_5< 1/16$ be sufficiently small constants of our choice.
Define
\begin{multline}\label{Omega_dfn}
    \Omega=  \Union_{\bstack{Q/2 \leq q \leq Q}{\text{$q$ prime}}} \hspace{1em}
 \Union_{(a_1,a_2,\ldots,a_n) \in \Gcal^*(q) } \{ |y_1 - 2\pi a_1/q| < c_4 q^{-1},  \\
 |y_j - 2\pi a_j/q| < c_5 q^{-1-1/(n-1)}, 2 \leq j \leq n\}.
\end{multline} 
Then 
\beq\label{Omega_measure}|\Omega| \gg_{n,k,c_4,c_5} (\log Q)^{-1}.\eeq
\end{prop}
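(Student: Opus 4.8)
The plan is to obtain this directly from Proposition \ref{prop_box_omega}, once its hypotheses are checked for the explicit boxes appearing in (\ref{Omega_dfn}). First I would take $\Pcal$ to be the set of primes in $[Q/2,Q]$; by the prime number theorem together with $Q\geq Q_0$ one has $|\Pcal|\geq \tfrac14 Q/\log Q\gg Q/\log Q$, which is precisely the extra hypothesis needed for the second (sharper) conclusion of Proposition \ref{prop_box_omega}. For each $q\in\Pcal$ we have $q\geq Q/2>16k^2$ (using $Q>32k^2$), so Corollary \ref{cor_T_prod_big} applies with $\al_1=1/2$ and $\al_2=k^{-2}/4$, and shows that the good set $\Gcal^*(q)$ defined by (\ref{Tprod_lower}) satisfies $|\Gcal^*(q)|\geq (\al_2/2)^n(1-2^{-n})q^n$; since trivially $|\Gcal^*(q)|\leq q^n$ and $\Pcal$ lies in the dyadic window $[Q/2,Q]$, the comparability hypothesis (\ref{DDG}) holds with constants $D_1,D_2$ depending only on $n,k$, and in particular $\min_{q\in\Pcal}|\Gcal^*(q)|\gg_{n,k} Q^n$.

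Next I would record the geometry of the sets in (\ref{Omega_dfn}): the set attached to a pair $(q,\underline{a})$ with $\underline{a}\in\Gcal^*(q)$ is exactly a box $I_{q,\underline{a}}$ centered at $(2\pi a_1/q,\ldots,2\pi a_n/q)$ with side-length $h_1(q)=2c_4 q^{-1}$ in the first coordinate and $h_2(q)=2c_5 q^{-1-1/(n-1)}$ in each of the remaining $n-1$ coordinates. Thus in the notation of Proposition \ref{prop_box_omega} we are in the case $\al=1$ and $\be=1+1/(n-1)\in[1,2]$; condition (\ref{hal}) holds (for instance with $D_3=c_4<2c_4<4c_4=D_4<1$ since $c_4<1/16$, and analogously for $h_2$ with $c_5<1/16$), and $h_2(x)=2c_5 x^{-1-1/(n-1)}\ll x^{-1-1/(n-1)}$ as required. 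So all hypotheses of Proposition \ref{prop_box_omega} are satisfied.

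Applying the second conclusion of Proposition \ref{prop_box_omega} then gives
\[
|\Omega|\;\gg_{n,k}\; Q^{n+1}h_1(Q)h_2(Q)^{n-1}(\log Q)^{-1}.
\]
It remains only to compute the power of $Q$: since $h_2(Q)^{n-1}=(2c_5)^{n-1}Q^{-(n-1)-1}=(2c_5)^{n-1}Q^{-n}$, we get $h_1(Q)h_2(Q)^{n-1}=2c_4(2c_5)^{n-1}Q^{-(n+1)}$, so that $Q^{n+1}h_1(Q)h_2(Q)^{n-1}=2c_4(2c_5)^{n-1}$ is a positive constant depending only on $n,c_4,c_5$. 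This yields $|\Omega|\gg_{n,k,c_4,c_5}(\log Q)^{-1}$, as claimed. I do not expect a genuine obstacle here: the statement is essentially a bookkeeping corollary of Corollary \ref{cor_T_prod_big} (which guarantees the sets $\Gcal^*(q)$ are all of size $\asymp q^n$) and Proposition \ref{prop_box_omega} (which converts well-distribution of the boxes into a lower bound for the measure of their union). The only care needed is to translate the half-widths $c_4 q^{-1}$ and $c_5 q^{-1-1/(n-1)}$ in (\ref{Omega_dfn}) into the parameters $h_1,h_2,\al,\be$ of Proposition \ref{prop_box_omega}, to read off the dyadic comparability (\ref{DDG}) from Corollary \ref{cor_T_prod_big}, and to confirm that $c_4,c_5<1/16$ keeps the boxes small enough for (\ref{hal}) and for the internal requirement (\ref{h2req}) used inside the proof of Proposition \ref{prop_box_omega}.
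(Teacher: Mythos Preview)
Your proposal is correct and follows essentially the same approach as the paper: apply Proposition~\ref{prop_box_omega} with $h_1(q)=2c_4q^{-1}$ and $h_2(q)=2c_5q^{-1-1/(n-1)}$, verify $|\Gcal^*(q)|\asymp q^n$ via Corollary~\ref{cor_T_prod_big}, and compute the resulting power of $Q$. The paper's own proof is terser but identical in structure; your extra checks of (\ref{DDG}) and (\ref{hal}) are accurate and simply make explicit what the paper leaves implicit.
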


\begin{proof}
Apply Proposition \ref{prop_box_omega}  to the boxes $I_{q,\underline{a}}$ with side-lengths 
$h_1(q) = 2c_4 q^{-1}$ and $h_2(q) = 2c_5 q^{-1-1/(n-1)}.$
Note that $q^n \ll_{n,k} |\Gcal^*(q)| \leq   q^n $ for all $q \in [Q/2,Q]$ by Corollary
\ref{cor_T_prod_big}.
Hence we conclude that 
\[
|\Omega| \gg_{c_4,c_5,k} Q^{n+1} Q^{-1} (Q^{-1-1/(n-1)})^{n-1} (\log Q)^{-1} \gg_{n,k,c_4,c_5} (\log Q)^{-1}.
\]
\end{proof}

\subsection{Formal definition of $\Omega^*$}\label{sec_x_y}
We have constructed a set $\Omega$ of  $(y_1,y_2,\ldots,y_n) \in [0,2\pi]^n$. Now we   use the change of variables (\ref{expnot}) to define the corresponding set $\Omega^*$ of $(x_1,x_2,\ldots,x_n)$.
For completeness, we briefly note this correspondence, following the analogous argument given in \cite{Pie20}.

Consider the reduction modulo $2\pi$ map $\iota: \R \rightarrow [0,2\pi]$ defined by $\iota(z) = z \pmod{2\pi}$ and the rescaling map $r: \R \rightarrow \R$ defined by $r(z) = Mz$ for some $M$ sufficiently large so that $Mc_1 > 2\pi$, where $c_1$ is the constant fixed just above (\ref{eqn: t condition 1 kpurediag}). From (\ref{expnot}) we have  $y_1=\iota \circ r (-x_1)$ with $M = \frac{L^k}{k R^{k-1}}$; for each $j=2,\ldots, n$ we have  $y_j = \iota \circ r (x_j)$ with $M = L$. If we assume that $L=R^\lam$ with $\lam> (k-1)/k$ (as we will later verify), then there exists an absolute constant $R_5(k,c_1)$ such that for all  $R \geq R_5$  each of these rescaling factors $M$ is sufficiently large relative to $c_1$.
Finally, let $\pi_j$ be the projection map to the $j$th coordinate.  

Define $\Omega^*\subseteq [-c_1,-c_1/2] \times [-c_1,c_1]^{n-1}$ to be the set such that 
\begin{alignat*}{2}
    \pi_1(\Omega^*) &=  -(\iota \circ r)^{-1}\pi_1(\Omega), & &\qquad M = \frac{L^k}{k R^{k-1}},\\
    \pi_j(\Omega^*) &= (\iota \circ r)^{-1}\pi_j(\Omega), & &\qquad M = L, \qquad  2\leq j \leq n.
\end{alignat*} 

To see that $\Omega^*$ has the desired measure, we may work coordinate by coordinate, since each of $\Omega$ and $\Omega^*$ is  a union of boxes. Let $S_0$ be a set in $[0,2\pi]$. For $S_1\subseteq [-Mc_1, Mc_1]$ with $\iota(S_1) = S_0$, we see that $S_1$ contains at least $2\lfloor Mc_1/2\pi \rfloor$ shifted copies of $S_0$ and so $|S_1|\geq 2\lfloor Mc_1/2\pi \rfloor |S_0|$. Further, for $S_2\subseteq [-c_1,c_1]$ with $r(S_2)=S_1$,    $|S_2| = |S_1|/M$ and so $|S_2| \gg c_1 |S_0|$. Analogously, for $S_1\subseteq [-Mc_1, -Mc_1/2]$ with $\iota(S_1) = S_0$ and $S_2\subseteq [-c_1,-c_1/2]$ with $r(S_2)=S_1$, we have $|S_1|\geq \lfloor Mc_1/4\pi \rfloor |S_0|$ and so $|S_2| = |S_1|/M \gg c_1 |S_0|$. It follows that for $S_0 = \pi_1(\Omega)$, we achieve $|\pi_1(\Omega^*)| \gg c_1 |\pi_1(\Omega)|$ and for $S_0 = \pi_j(\Omega)$ with $j=2,...,n$, we have $|\pi_j(\Omega^*)| \gg c_1 |\pi_j(\Omega)|$. Then $|\Omega^*| \gg_{n,c_1}   |\Omega|$.

In combination with (\ref{Omega_measure}), we may conclude that 
\beq\label{Omega_star_measure}
|\Omega^*| \gg_{n,k,c_1,c_4,c_5} (\log Q)^{-1}.\eeq

\section{Analysis of the arithmetic contribution} \label{sec_arithmetic}
Given any $Q > \max\{32k^2,Q_0\}$, we have now constructed a set $\Omega^* \subset [-c_1,c_1]^n \subset B_n(0,1)$ with measure $|\Omega^*| \gg_{n,k,c_1,c_4,c_5} (\log Q)^{-1}$ and such that for every $x \in \Omega^*$ there exists a corresponding $y \in \Omega$, with $\Omega$ defined in (\ref{Omega_dfn}).
Now we restrict our choice of $Q$ relative to $R,L, S_1$, so that the other desired properties of $\Omega^*$ hold.

\begin{prop}\label{prop_ME2}
 Suppose that $Q> \max\{32k^2,Q_0\}$, and that
    \beq \label{conditions}
    \frac{1}{Q}\leq   \frac{L^k}{S_1R^{k-1}}, \qquad \frac{1}{Q^{1+1/(n-1)}}\ll \bigg(\frac{R}{L}\bigg)^{-1}, \qquad
     \frac{R}{L} \gg Q^{1+\Delta_0}
    \eeq
for some constant $0<\Delta_0 \leq 1/(n-1)$.
There exists $0<c_4(c_2,k,\del_0)<1/16$ such  that if for any $c_4<c_4(c_2,k,\del_0)$ and $c_5<1/16$ we define $\Omega^*$ as above, the following holds. 
 
 For each $x \in \Omega^*$, 
there exists a choice of $t \in (0,1)$ satisfying (\ref{eqn: t condition 1 kpurediag}) and (\ref{eqn: t condition 2 kpurediag}) such that
\beq\label{S_and_E}
|\Sbf(2R/L;x',t)| = \Mbf_1(x',t) + \Ebf_2,
\eeq
in which
\begin{align} 
|\Mbf_1(x',t)| &\geq 2^{-2(n-1)}\left(\frac{R}{LQ^{1/2}}\right)^{n-1}  , \label{bound_M1}\\
|\Ebf_2|&  \ll_{n,k}  (c_5+  Q^{-\Delta_0/2}) \bigg(\frac{R}{LQ^{1/2}}\bigg)^{n-1}.
\label{bound_E2}
\end{align}
Here the implied constant can depend on $n,k$ but is independent of $x',t$.
\end{prop}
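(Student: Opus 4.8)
The plan is to fix $x\in\Omega^*$, use the construction of \S\ref{sec_x_y} to locate the prime $q\in[Q/2,Q]$ and tuple $(a_1,\ldots,a_n)\in\Gcal^*(q)$ whose box in (\ref{Omega_dfn}) contains the point $y$ associated to $x$ by (\ref{expnot}) (so $|y_1-2\pi a_1/q|<c_4q^{-1}$ and $|y_j-2\pi a_j/q|<c_5q^{-1-1/(n-1)}$ for $2\le j\le n$), and then to spend the freedom in $t$ to make the leading coefficient of the resulting one-variable exponential sums \emph{exactly rational}. Concretely, I would set $s:=2\pi a_1/q-y_1$ (so $|s|<c_4q^{-1}$), $\tau:=s/L^k$, and $t:=-x_1/(kR^{k-1})+\tau$. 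First I would check admissibility of this $t$: since $q\ge Q/2$ we have $|\tau|<2c_4Q^{-1}L^{-k}$, and the first relation in (\ref{conditions}) gives $Q^{-1}L^{-k}\le(S_1R^{k-1})^{-1}$, whence $|\tau|<2c_4(S_1R^{k-1})^{-1}$; so choosing $c_4(c_2,k,\del_0):=\tfrac12\min\{c_2\del_0/(2k),1/16\}$ forces $|\tau|\le c_2\del_0/(kS_1R^{k-1})$, i.e.\ (\ref{eqn: t condition 1 kpurediag}). Since $x_1\in[-c_1,-c_1/2]$ while $|\tau|$ is negligible next to $|x_1|/(kR^{k-1})$, for $R$ large $t\in(0,1)$, and (\ref{eqn: t condition 2 kpurediag}) follows because $|t|\ll_k R^{-(k-1)}$ and $\sig\le1/2$, exactly as in \S\ref{sec_removal}.

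Next I would reduce $\Sbf(2R/L;x',t)$ to complete sums. Separability of the phase gives $\Sbf(2R/L;x',t)=\prod_{j=2}^n S(2R/L;x_j,t)$, and since $y_1+s=2\pi a_1/q$ and $m_j^k\in\Z$, formula (\ref{expsum}) becomes $S(2R/L;x_j,t)=\sum_{R/L\le m_j<2R/L}e(2\pi a_1m_j^k/q+y_jm_j)$, an incomplete sum whose leading coefficient is now the rational $2\pi a_1/q$. I would then apply Proposition \ref{prop_sum_approx} to each of these $n-1$ sums, with polynomial $2\pi a_1m^k/q+y_jm$, length $N:=\lceil2R/L\rceil-\lceil R/L\rceil=R/L+O(1)$, rational approximant $b=a_j$, and $V=c_5q^{-1-1/(n-1)}$; its hypotheses hold because $q$ is prime with $q>16k^2>k$, $q\nmid a_1$ for tuples in $\Gcal^*(q)$ by Corollary \ref{cor_T_prod_big}, and $|y_j-2\pi a_j/q|<V$. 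This gives $|S(2R/L;x_j,t)|=\lfloor N/q\rfloor\,|T(a_1,a_j;q)|+E_j$ with $|E_j|\ll_k NV(\lfloor N/q\rfloor q^{1/2}+q^{1/2}\log q)+q^{1/2}\log q$. I would then define $\Mbf_1(x',t):=\lfloor N/q\rfloor^{n-1}\prod_{j=2}^n|T(a_1,a_j;q)|$ and $\Ebf_2:=|\Sbf(2R/L;x',t)|-\Mbf_1(x',t)=\sum_{\emptyset\ne S\subseteq\{2,\ldots,n\}}\prod_{j\in S}E_j\prod_{j\notin S}\lfloor N/q\rfloor|T(a_1,a_j;q)|$, the cross terms in the expansion of $\prod_{j=2}^n(\lfloor N/q\rfloor|T(a_1,a_j;q)|+E_j)$.

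For the lower bound (\ref{bound_M1}): membership in $\Gcal^*(q)$ gives, by (\ref{Tprod_lower}), $\prod_{j=2}^n|T(a_1,a_j;q)|\ge2^{-(n-1)}q^{(n-1)/2}$, and the third relation $R/L\gg Q^{1+\Delta_0}$ gives $\lfloor N/q\rfloor\ge\tfrac12\,R/(Lq)$ and hence $\lfloor N/q\rfloor q^{1/2}\ge R/(2Lq^{1/2})\ge R/(2LQ^{1/2})$ (for $R$ large, since $R/(Lq)\gg Q^{\Delta_0}$); so $|\Mbf_1|\ge2^{-(n-1)}\left(\frac{R}{2LQ^{1/2}}\right)^{n-1}=2^{-2(n-1)}\left(\frac{R}{LQ^{1/2}}\right)^{n-1}$. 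For the upper bound (\ref{bound_E2}): the Weil bound (\ref{Weil}) gives $\lfloor N/q\rfloor|T(a_1,a_j;q)|\le(k-1)\lfloor N/q\rfloor q^{1/2}\ll_k R/(LQ^{1/2})$ for each main factor, and the three terms of $E_j$ are handled by the three relations of (\ref{conditions}) in turn: $NV\lfloor N/q\rfloor q^{1/2}\asymp c_5(R/L)^2Q^{-3/2-1/(n-1)}=c_5\frac{R}{LQ^{1/2}}\cdot\frac{R}{L}Q^{-1-1/(n-1)}\ll c_5\frac{R}{LQ^{1/2}}$ by the second relation; $NVq^{1/2}\log q\asymp c_5\frac{R}{LQ^{1/2}}\cdot Q^{-1/(n-1)}\log Q\ll Q^{-\Delta_0/2}\frac{R}{LQ^{1/2}}$ using $\Delta_0\le1/(n-1)$; and $q^{1/2}\log q\ll Q^{1/2}\log Q=\frac{R}{LQ^{1/2}}\cdot\frac{Q\log Q}{R/L}\ll Q^{-\Delta_0/2}\frac{R}{LQ^{1/2}}$ by the third relation. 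Thus $|E_j|\ll_{n,k}(c_5+Q^{-\Delta_0/2})\frac{R}{LQ^{1/2}}$, and since $c_5+Q^{-\Delta_0/2}<1$ for $R$ large, each of the at most $2^{n-1}$ summands of $\Ebf_2$ is $\ll_{n,k}(c_5+Q^{-\Delta_0/2})\left(\frac{R}{LQ^{1/2}}\right)^{n-1}$, which yields (\ref{bound_E2}).

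The main obstacle I anticipate is the bookkeeping in this last step: checking that the three relations of (\ref{conditions}) are exactly what forces every cross term of the product, and every term of each $E_j$, below the main term $\left(\frac{R}{LQ^{1/2}}\right)^{n-1}$, with the losses organized into the shape $c_5+Q^{-\Delta_0/2}$; in particular the hypothesis $\Delta_0\le1/(n-1)$ is used precisely to absorb the factors $Q^{-1/(n-1)}\log Q$ and $Q^{-\Delta_0}\log Q$ into $Q^{-\Delta_0/2}$. Everything else is routine. The conceptual point, and the reason the argument survives to degrees $k\ge3$, is the choice of $t$ in the first step: by arranging the leading coefficient to equal $2\pi a_1/q$ exactly, only the \emph{linear} coefficient of the phase needs to be approximated, so the error supplied by Proposition \ref{prop_sum_approx} stays of square-root size, instead of the far weaker Weyl/Vinogradov-type size a direct approach would incur.
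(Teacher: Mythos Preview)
Your proposal is correct and follows essentially the same route as the paper's proof: fix $x\in\Omega^*$, pick $s=2\pi a_1/q-y_1$ (hence $t$) to make the top coefficient exactly rational and verify (\ref{eqn: t condition 1 kpurediag})--(\ref{eqn: t condition 2 kpurediag}) via the first condition in (\ref{conditions}), apply Proposition~\ref{prop_sum_approx} coordinate-wise to get $|S(2R/L;x_j,t)|=\lfloor N/q\rfloor\,|T(a_1,a_j;q)|+E_j$, then multiply and bound the cross terms using Weil for the main factors and the second and third conditions of (\ref{conditions}) for the $E_j$. Your bookkeeping of the three pieces of $E_j$ and your use of $\Delta_0\le 1/(n-1)$ to absorb the logarithms into $Q^{-\Delta_0/2}$ match the paper's estimates (\ref{E2j_first}) and (\ref{q_above}), and your lower bound for $\Mbf_1$ via (\ref{Tprod_lower}) and $\lfloor R/(Lq)\rfloor\ge \tfrac12 R/(Lq)$ is exactly the paper's argument.
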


At this point we can also complete the upper bound for the error term $\Ebf_1$ from (\ref{E1_bound}).

\begin{lemma}\label{lemma_E1E2}
Assume the conditions of Proposition \ref{prop_ME2}, and for each $x \in \Omega^*$, choose $t$ as in Proposition \ref{prop_ME2}. Then  
\[ |\Ebf_1| \ll_{n,k,\phi }(c_1+c_2\del_0) \left( \frac{R}{LQ^{1/2}} \right)^{n-1}.\]
\end{lemma}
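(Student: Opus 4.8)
The plan is to substitute the bounds from Proposition \ref{prop_ME2} into the expression (\ref{E1_bound}) for $\Ebf_1$ and observe that the constraint (\ref{eqn: t condition 1 kpurediag}) on $t$ gives an extra small factor $c_1+c_2\del_0$ on the ``error-type'' sums $S(u;x_j,t)$ relative to the ``main-type'' sums $S(2R/L;x_j,t)$. Concretely, recall that (\ref{E1_bound}) reads
\[
|\Ebf_1| \ll_{\phi,k} \sum_{\ell=0}^{n-2} \Big(\sup_{2\le j\le n} |S(2R/L;x_j,t)|\Big)^\ell \Big( R^{k-1}|t|\, \sup_{2\le j\le n}\sup_{R/L\le u<2R/L} |S(u;x_j,t)|\Big)^{n-1-\ell},
\]
so I need (a) an upper bound $\sup_j |S(2R/L;x_j,t)| \ll_{n,k} (R/L)Q^{-1/2}$ for the full-length sums, (b) the same upper bound $\sup_{j,u} |S(u;x_j,t)| \ll_{n,k} (R/L)Q^{-1/2}$ for the truncated sums, and (c) the bound $R^{k-1}|t| \ll c_1+c_2\del_0$ coming from the choice of $t$.

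First I would establish (c): by (\ref{eqn: t condition 1 kpurediag}), $t = -x_1/(kR^{k-1}) + \tau$ with $|x_1|\le c_1$ and $|\tau|\le c_2\del_0/(kS_1R^{k-1})$, hence $R^{k-1}|t| \le c_1/k + c_2\del_0/(kS_1) \ll c_1+c_2\del_0$, since $S_1\ge 1$. Next I would establish (a) and (b) by invoking Proposition \ref{prop_sum_approx} applied with $N = R/L$ (or $N = u - R/L \le R/L$ for the truncated sums), exactly as in the proof of Proposition \ref{prop_ME2}: for $x \in \Omega^*$ the corresponding $y \in \Omega$ has $y_j$ within $c_5 q^{-1-1/(n-1)}$ of $2\pi a_j/q$, and in particular $y_j$ is within $V \ll (R/L)^{-1}$ of a rational $2\pi b/q$, so Proposition \ref{prop_sum_approx} gives $|S(u;x_j,t)| \le \lfloor N/q\rfloor |T(a_1,b;q)| + |E|$ with the main term $\ll_k (R/(Lq)) q^{1/2} \ll_k (R/L) Q^{-1/2}$ by the Weil bound (\ref{Weil}), and the error $|E| \ll_k NV(\lfloor N/q\rfloor q^{1/2} + q^{1/2}\log q) + q^{1/2}\log q$, which under the conditions (\ref{conditions}) is dominated by $(R/L)Q^{-1/2}$ as well (the same estimates already carried out for $\Ebf_2$ in Proposition \ref{prop_ME2}). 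Hence $\sup_j |S(2R/L;x_j,t)|$ and $\sup_{j,u}|S(u;x_j,t)|$ are both $\ll_{n,k} (R/L)Q^{-1/2}$.

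Combining, each summand in (\ref{E1_bound}) is bounded by
\[
\Big((R/L)Q^{-1/2}\Big)^\ell \Big((c_1+c_2\del_0)(R/L)Q^{-1/2}\Big)^{n-1-\ell} \ll (c_1+c_2\del_0)^{n-1-\ell}\Big(\frac{R}{LQ^{1/2}}\Big)^{n-1},
\]
and since $0<c_1,c_2,\del_0<1/2$ each factor $(c_1+c_2\del_0)^{n-1-\ell}$ is at most $(c_1+c_2\del_0)$ when $\ell \le n-2$ (i.e.\ the exponent $n-1-\ell \ge 1$), which is exactly the range of summation. Summing the $n-1$ terms absorbs a harmless constant depending only on $n$, giving $|\Ebf_1| \ll_{n,k,\phi} (c_1+c_2\del_0)(R/(LQ^{1/2}))^{n-1}$, as claimed.

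The only real subtlety — and the step I would be most careful about — is verifying that the error term $|E|$ from Proposition \ref{prop_sum_approx}, applied to the truncated sums $S(u;x_j,t)$ for $R/L \le u < 2R/L$, is genuinely $\ll_{n,k}(R/L)Q^{-1/2}$ uniformly in $u$; this is where the conditions (\ref{conditions}) and the choice $V \ll (R/L)^{-1}$ (governed by $c_5$ and the third condition $R/L \gg Q^{1+\Delta_0}$) must be used, and it is precisely the computation already done inside Proposition \ref{prop_ME2} for $\ell = 0$, so I would simply cite that rather than redo it. Everything else is bookkeeping.
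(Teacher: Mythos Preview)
Your proposal is correct and follows essentially the same approach as the paper: you substitute the uniform upper bound $|S(u;x_j,t)| \ll_k R/(LQ^{1/2})$ (which the paper records as (\ref{upper_for_E1}) inside the proof of Proposition~\ref{prop_ME2}) into the expression (\ref{E1_bound}), use $R^{k-1}|t| \ll c_1 + c_2\del_0 < 1$ from (\ref{eqn: t condition 1 kpurediag}), and observe that the $\ell = n-2$ term dominates. The paper's version is terser but identical in substance.
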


We remark on the motivations for the conditions in (\ref{conditions}).
  The first condition ensures that for each $x \in \Omega^*$ there exists a choice of $t$ such that the one-dimensional exponential sum $S(u;x_j,t)$  has a rational leading coefficient; this allows the use of Proposition \ref{prop_sum_approx}. Equivalently, this is the property that we can choose $s$ so that $y_1 + s = 2\pi a_1/q$ for some $a_1,q$ in the definition of $\Omega$. In the construction of $\Omega$ we specify $|s| < c_4 q^{-1} \leq 2c_4Q^{-1}$; moreover recall from \eqref{eqn: t condition 1 kpurediag} and \eqref{expnot} that we must have $|s|=L^k|\tau| \leq c_2\delta_0L^k/(kS_1R^{k-1}).$ The first condition of \eqref{conditions} imposes that these two restrictions are compatible, and then we simply ensure that we choose $c_4$ sufficiently small that $2c_4< c_2\del_0/k$.

 The second and third conditions in (\ref{conditions}) are imposed so that the upper bound for the term  $\Ebf_2$ in \eqref{S_and_E}   is small enough relative to the main term. We can see from these conditions that $Q$ grows with $R$ since as we assumed from the beginning, $L=o(R)$. The second condition
 can be regarded as imposing that $V(q)$ is small enough; this  
 is consistent with our previous condition (\ref{qrl}). The third condition will provide 
 the term $Q^{-\Delta_0/2}$ in the upper bound (\ref{bound_E2}), which can be made satisfactorily small. 
In particular, there exists some $R_6 = R_6(\Del_0, c_5)$ such that for all $R \geq R_6$, $Q^{-\Del_0/2} \leq c_5.$ 
   
 \begin{remark}\label{remark_dep_k} The first condition in (\ref{conditions}) is   the condition that ultimately forces the dependence on $k$ in the threshold for $s$ proved in Theorems \ref{thm_main_max} and \ref{thm_main}. This will be visible when we optimize the choice of parameters in \S \ref{sec_optimize}.  
 \end{remark}
 
\subsection{Contribution of $|\Sbf(2R/L;x',t)|$ and bound for $\Ebf_2$}
We now prove Proposition \ref{prop_ME2}. Fix $Q > \max \{ 32k^2,Q_0\}.$
Fix $x \in \Omega^*$. 
By definition, this point $x$ corresponds to a point $y \in \Omega$, and for this $y \in \Omega$ there exists a prime $q \in [Q/2,Q]$ and a tuple  $1 \leq a_1,a_2,\ldots, a_n \leq q$  in $\Gcal^*(q)$ such that $q \ndiv a_1$, $|y_1 - 2\pi a_1/q| < c_4 q^{-1}$ and $|y_j - 2\pi a_j/q| < c_5 q^{-1-1/(n-1)}$ for each $2 \leq j \leq n$, and such that (\ref{Tprod_lower}) holds. 
We then define $s$ by $y_1 +s = 2\pi a_1/q$, and this defines $t$ accordingly.
Note that $t$ satisfies (\ref{eqn: t condition 1 kpurediag}) and (\ref{eqn: t condition 2 kpurediag}) as long as $c_4$ is sufficiently small relative to $c_2,\del_0.$

\subsubsection{The one dimensional sums}
For each coordinate $2 \leq j \leq n$ we apply Proposition \ref{prop_sum_approx} to show that for each $u \leq 2R/L,$
    \beq \label{bound_Sj} S(u;x_j,t) = \lfloor (u-R/L)/q \rfloor T(a_1,a_j;q) + E_{2,j}
    \eeq
where $T(a_1,a_j;q)$ is defined in (\ref{expr_T}) and 
    \begin{align*} 
    |E_{2,j}| &\ll_k \frac{R}{L} (c_5q^{-1-1/(n-1)})\bigg(\frac{R}{Lq^{1/2}} + q^{1/2}\log q \bigg) + q^{1/2} \log q.
    \end{align*}

From this we will deduce two results: first, 
\beq\label{lower_upper_ME2}
|S(2R/L;x_j,t)| = \bigg\lfloor \frac{R}{Lq} \bigg\rfloor |T(a_1,a_j;q)| + O_k\bigg((c_5 + Q^{-\Del_0/2})\frac{R}{LQ^{1/2}}\bigg).
\eeq
Second, for all $R/L \leq u \leq 2R/L$,
\beq\label{upper_for_E1}
|S(u;x_j,t)| \ll_k \frac{R}{LQ^{1/2}}.
\eeq

To prove both of these, it is useful to simplify the upper bound for $E_{2,j}.$
By the second condition in (\ref{conditions}), 
$R/L \cdot c_5 q^{-1-1/(n-1)} \ll c_5$ for all $q \in [Q/2,Q].$
By the third condition in (\ref{conditions}),
\beq\label{q_above}
q^{1/2} \log q  \ll \frac{R}{LQ^{1+\Del_0}} q^{1/2}\log q \ll \frac{R}{LQ^{1/2}} Q^{-\Del_0} \log Q \ll \frac{R}{LQ^{1/2}} Q^{-\Del_0/2},\eeq
say. Thus
\beq\label{E2j_first}
     |E_{2,j}|
     \ll_k (c_5 + Q^{-\Del_0/2}) \frac{R}{LQ^{1/2}}.
 \eeq

To prove (\ref{lower_upper_ME2}) we simply apply (\ref{bound_Sj}) with $u=2R/L$, and use our bound for $E_{2,j}$. 
  To prove (\ref{upper_for_E1}) we   apply the Weil bound  to   $T(a_1,a_j;q)$ in the main term, using $q \ndiv a_1$. 
  We conclude that for all $u \leq 2R/L$, 
  \[ |S(u;x_j,t)| \ll_k (R/Lq) q^{1/2} + |E_{2,j}|,\]
  which suffices for (\ref{upper_for_E1}).

 \subsubsection{Assembling the one-dimensional sums}   
We multiply together the expression (\ref{lower_upper_ME2}) over $2\leq j \leq n$, to obtain that 
\[ |\Sbf(2R/L;x',t)| =\bigg\lfloor \frac{R}{Lq}\bigg\rfloor^{n-1}\prod_{j=2}^n |T(a_1,a_j;q)| + \Ebf_2.\]
The first term satisfies the lower bound 
\[ \geq \frac{1}{2^{n-1}}\bigg\lfloor\frac{R}{Lq}\bigg\rfloor^{n-1}  q^{(n-1)/2}
\geq
\frac{1}{2^{2(n-1)}}\left(\frac{R}{LQ^{1/2}}\right)^{n-1} .\]
Here first we applied (\ref{Tprod_lower}), then used the fact that
 $\lfloor\frac{R}{Lq}\rfloor \geq \frac{1}{2}\cdot \frac{R}{Lq},$  which holds as long as $\frac{R}{Lq}\geq 2$. This  we can assure for all $q \in [Q/2,Q]$ by our final choices for $R,L,Q$, as long as $R \geq R_7(n,k)$.  
 This
  suffices for (\ref{bound_M1}).
The error term is of the form
    \[|\Ebf_2| \ll
        \sum_{\ell=0}^{n-2} \bigg(\frac{R}{Lq}\sup_{2\leq j \leq n}|T(a_1,a_j;q)|\bigg)^\ell \bigg( (c_5+ Q^{-\Delta_0/2}) \frac{R}{LQ^{1/2}} \bigg)^{n-1-\ell}.
    \]
For all values of $a_1,a_2,\ldots,a_n$ that were chosen in $\Gcal^*(q)$ in the construction of $\Omega,$  \[ \sup_{2\leq j \leq n}|T(a_1,a_j;q)| \leq (k-1) q^{1/2}
\]
by the Weil bound.  
For all $R \geq R_6(\Del_0,c_5)$ we have $c_5 +Q^{-\Del_0/2} \le 2c_5<1$, so that the dominant term occurs when $\ell=n-2$. Then this satisfies the upper bound in (\ref{bound_E2}).

\subsection{Bound for $\Ebf_1$}
At this point we can also complete our upper bound for the term $\Ebf_1$. Fix $x \in \Omega^*$. We can define the same $s$ (and hence $t$) as in the proof of Proposition \ref{prop_ME2}, satisfying (\ref{eqn: t condition 1 kpurediag}) and (\ref{eqn: t condition 2 kpurediag}).
Now apply the upper bound for $|S(u;x_j,t)|$ derived in (\ref{upper_for_E1}) to the expression for $\Ebf_1$ from (\ref{E1_bound}). 
This shows that  
    \[|\Ebf_1| \ll_{\phi,k} \bigg(\frac{R}{LQ^{1/2}}\bigg)^{n-1}\sum_{\ell=0}^{n-2} (R^{k-1}|t|)^{n-1-\ell}.
    \]
By the conditions (\ref{eqn: t condition 1 kpurediag}) and (\ref{eqn: t condition 2 kpurediag}) on $t$, $R^{k-1}|t|< c_1+c_2\del_0<1$ so that 
the dominant term occurs when  $\ell=n-2$. 
Then we achieve the bound 
    \beq \label{bound_E1} |\Ebf_1|\ll_{\phi,k,n} R^{k-1}|t| \bigg(\frac{R}{LQ^{1/2}} \bigg)^{n-1} \ll_{\phi,k,n} (c_1+c_2\del_0) \bigg(\frac{R}{LQ^{1/2}} \bigg)^{n-1}. 
    \eeq
 Here $c_1, c_2, \del_0$ are constants we can choose as small as we like, as in (\ref{eqn: t condition 1 kpurediag}).

\section{Optimization of parameters and concluding arguments}\label{sec_choices}

The results from Proposition \ref{prop_approx}, Proposition \ref{prop_ME2} and Lemma \ref{lemma_E1E2}  show that for every $x\in \Omega^*$, there exists $t\in (0,1)$   such that
\beq\label{T_final}
|T_t^{(P_k)}f(x)| \geq (1-c_0)^n 2^{-2(n-1)}
    \left(\frac{R}{LQ^{1/2}}\right)^{n-1}  
    - (|\Ebf_1| + |\Ebf_2|).
\eeq
Here the error terms $\Ebf_1,\Ebf_2$ satisfy the upper bounds given in  Lemma \ref{lemma_E1E2} and Proposition \ref{prop_ME2}. This is under the conditions (\ref{eqn: t condition 1 kpurediag}) and (\ref{eqn: t condition 2 kpurediag}) for $t$, condition (\ref{sig}) for $S_1 = R^\sig$, and the conditions in (\ref{conditions}) on $R,L,S_1,Q.$  Recall that we may freely choose the small constants $c_0$ and $\del_0=\del_0(c_0)$, as well as $c_1, c_2,c_3, c_4, c_5$ as small as we like, subject to the dependencies we have recorded in the notations $c_1(k,\del_0), c_2(k,\phi,c_0,\del_0), c_3(k,\phi,c_0),c_4(c_2,k,\del_0).$
The implicit constants in the upper bounds for $\Ebf_1,\Ebf_2$ depend only on $n,k, \phi$. We also recall that  there exists $R_6(\Delta_0,c_5)$ such that for all $R\geq R_6$, $Q^{-\Delta_0/2} \leq c_5.$ 

First we fix $c_0$. Upon choosing $c_1,c_2,c_5,\del_0$   small enough relative to $c_0$ and the implicit constants in Lemma \ref{lemma_E1E2} and Proposition \ref{prop_ME2} (which are dependent only on $n,k,\phi$), choosing $c_3, c_4$ suitably small, and then taking $R \geq R_8(n,k,\phi, \Del_0)$ sufficiently large, we can conclude that $|\Ebf_1| + |\Ebf_2|$ is, say, no more than $1/2$ the size of the main term in (\ref{T_final}). 
We now let $R^*(n,k,\phi,\Del_0, \sig, c_0)$ denote the maximum of $R_1,\ldots, R_8.$ 
Then for all $R \geq R^*$, for all $x \in \Omega^*,$
\[ \sup_{0<t<1} |T_t^{(P_k)}(f)(x)|   \geq \frac{1}{2} (1-c_0)^n 2^{-2(n-1)}
    \left(\frac{R}{LQ^{1/2}}\right)^{n-1} ,
\]
    under the conditions we have assumed so far on $R,L, S_1, Q.$
Combining this with the lower bound (\ref{Omega_star_measure}) for the measure  of $\Omega^*$ and the computation for $\|f\|_{L^2}$ in (\ref{L2norm}), we can conclude that
\[ \frac{\|\sup_{0<t<1} |T_t^{(P_k)}f| \|_{L^1(B_n(0,1))}}{\|f\|_{L^2}}
 \gg_{n,k,\phi}  
  \bigg(\frac{R}{LQ^{1/2}}\bigg)^{n-1}S_1^{1/2}(R/L)^{-(n-1)/2} (\log Q)^{-1}.
 \]

\subsection{Choices for the parameters}\label{sec_optimize}

Now to prove Theorem \ref{thm_main} it suffices to show that for each 
\[s <  \frac{1}{4} + \frac{n-1}{4((k-1)n+1)},\]
we can choose $L, Q,S_1$ in terms of $R$ such that  all previous constraints are met, and
    \beq\label{final_ineq}
    \bigg(\frac{R}{LQ^{1/2}}\bigg)^{n-1}S_1^{1/2}(R/L)^{-(n-1)/2}(\log Q)^{-1} \geq A_sR^{s'}
    \eeq
for some $s'>s$ and some $A_s = A_s(n,k,\phi)$.

Let $0<\sig, \lam,\kappa<1$ denote parameters such that $Q=R^\kappa, L=R^\lambda, S_1 = R^\sig.$ Then \eqref{final_ineq} will hold for all sufficiently large $R$ if
\beq\label{s_relation}
s< \frac{n-1}{2} + \frac{\sig}{2} - \frac{(\kappa + \lam)(n-1)}{2}.\eeq
We also require $\sig \leq 1/2$ as in (\ref{sig}). The first and second constraints from \eqref{conditions} are met if
\beq\label{lam_kap_relation}
\sig + (k-1) \leq k \lam + \kappa, \qquad \lam + \kappa \bigg(\frac{n}{n-1}\bigg) \geq 1.\eeq
(The left-hand relation here is the only effect of $k$ on the choice of parameters.)
Finally we will have to check that certain less restrictive restraints are met, namely that $(k-1)/k< \lam$ and that for some $0<\Del_0 \leq 1/(n-1),$ $\kappa(1+\Del_0) \leq 1-\lam.$

By taking a linear combination of the inequalities in (\ref{lam_kap_relation}) (namely  $1/(k-1)$ times the first one plus $(n-1)$ times the second one) we deduce that 
\beq\label{equality}
\lam + \kappa  \geq \frac{n+\sig/(k-1)}{n+1/(k-1)}.\eeq
The relation (\ref{s_relation}) yields the largest upper bound for $s$ when $\lam+ \kappa$ is the smallest;   thus we will choose $\lam, \kappa$ so that equality holds here. Assuming this for the moment, we learn that 
\[ s < \frac{(n-1) + \sig ((k-2)n+2)}{2((k-1)n+1)}.\]
The upper bound is largest when $\sig$ is largest among allowable values, so we take $\sig =1/2$. 
We solve for values of $\lam,\kappa$ satisfying (\ref{lam_kap_relation}) that attain equality in (\ref{equality}); this yields 
\[ \lambda = 1-\frac{n}{2((k-1)n+1)}, \qquad \kappa = \frac{n-1}{2((k-1)n+1)}.\]
(This means we choose $Q$ such that $Q^{-1-1/(n-1)} \approx (R/L)^{-1},$ or in other words, we can take $\Del_0=1/(n-1).$ Finally, we see that this choice of $\lam$ satisfies $\lam>(k-1)/k$ for all $n \geq 1, k\geq 2.$)
This leads to the final constraint that 
\[ s< \frac{kn}{4((k-1)n+1)}.\]
This completes the proof of Theorem \ref{thm_main}, and hence of Theorem \ref{thm_main_max}.

\subsection*{Acknowledgements}
The authors thank Po Lam Yung and Igor Shparlinski for helpful conversations.
During this work, Pierce has been partially supported by NSF   CAREER grant DMS-1652173, a Sloan Research Fellowship, and the AMS Joan and Joseph Birman Fellowship.

\bibliographystyle{alpha}

\bibliography{NoThBibliography}

\end{document}